\documentclass[a4paper,12pt,intlimits,oneside,reqno]{amsart}
\usepackage{enumerate}
\usepackage{amsfonts,amsmath, amsxtra,amssymb,latexsym, amscd,amsthm}

\usepackage{latexsym,amssymb}
\newtheorem{theorem}{Theorem}[section]
\newtheorem{lemma}[theorem]{Lemma}

\theoremstyle{corollary}
\newtheorem{corollary}[theorem]{Corollary}

\theoremstyle{definition}
\newtheorem{definition}[theorem]{Definition}

\theoremstyle{remark}

\numberwithin{equation}{section}



\textwidth14cm \textheight21cm \evensidemargin.2cm
\oddsidemargin1.2cm

\addtolength{\headheight}{3.2pt}
\newcommand{\comment}[1]{}

\begin{document}

\title [Multilinear Hausdorff operator]{Multilinear Hausdorff operators on some function spaces with variable exponent}

\author{Nguyen Minh Chuong}

\address{Institute of mathematics, Vietnamese  Academy of Science and Technology,  Hanoi, Vietnam.}
\email{nmchuong@math.ac.vn}

\author{Dao Van Duong}
\address{Shool of Mathematics,  Mientrung University of Civil Engineering, Phu Yen, Vietnam.}
\email{daovanduong@muce.edu.vn}

\author{Kieu Huu Dung}
\address{Shool of Mathematics, University of Transport and Communications- Campus in Ho Chi Minh City, Vietnam.}
\email{khdung@utc2.edu.vn}
\keywords{Multilinear operator, Hausdorff operator, weighted Hardy-Littlewood operator, Lebesgue space, Morrey-Herz space, variable exponent.}
\subjclass[2010]{Primary 42B30; Secondary 42B20, 47B38}
\begin{abstract}
The aim of the present paper is to give necessary and sufficient conditions for the boundedness of a general class of multilinear Hausdorff operators that acts on the product of some weighted  function spaces with variable exponent such as the weighted Lebesgue, Herz, central Morrey and Morrey-Herz type spaces with variable exponent.  Our results improve and generalize some previous known results.
\end{abstract}

\maketitle

\section{Introduction}\label{section1}
The one dimensional Hausdorff operator is defined by
$$ H_{\Phi}(f)(x)=\int\limits_{0}^\infty{\frac{\Phi(y)}{y}f\left(\frac{x}{y}\right)dy},$$ 
where $\Phi$ is an integrable function on the positive half-line. The Hausdorff operator may be originated by Hurwitz and Silverman \cite{Hurwitz} in order to study summability of number series (see also \cite{Hausdorff}). It is well known that the Hausdorff operator is one of important operators in harmonic analysis, and it is used to solve certain classical problems in analysis. It is worth pointing out that if the kernel function $\Phi$ is taken appropriately, then  the Hausdorff operator reduces to many classcial operators in analysis such as the Hardy operator, the Ces\`{a}ro operator, the Riemann-Liouville fractional integral operator and the Hardy-Littlewood average operator (see, e.g., \cite{Andersen}, \cite{Christ}, \cite{FGLY2015}, \cite{Miyachi} and references therein).
\vskip 5pt
In 2002, Brown and M\'{o}ricz \cite{BM} extended the study of  Hausdorff operator to the high dimensional space.
Given $\Phi$ be a locally integrable function on $\mathbb R^n$, the $n$-dimensional Hausdorff operator $H_{\Phi,A}$ associated to the kernel function $\Phi$ is then defined in terms of the integral form as follows
\begin{equation}\label{Hausdorff1}
H_{\Phi, A}(f)(x)=\int\limits_{\mathbb R^n}{\frac{\Phi(t)}{|t|^n}f(A(t) x)dt},\,x\in\mathbb R^n,
\end{equation}
where $A(t)$ is an $n\times n$ invertible matrix for almost everywhere $t$ in the support of $\Phi$.  It should be pointed out that if we take  $\Phi(t)=|t|^n\psi(t_1)\chi_{[0,1]^n}(t)$ and $A(t)= t_1.I_n$ ($I_n$ is an identity matrix), for $t=(t_1,t_2,...,t_n)$, where $\psi:[0,1]\to [0,\infty)$ is a measurable function,  $H_{\Phi,A}$ then reduces to the weighted Hardy-Littlewood average operator due to Carton-Lebrun and Fosset \cite{Carton-Lebrun} defined as the following
\begin{equation}\label{Upsi}
U_{\psi}(f)(x)=\int\limits_{0}^{1}{f(tx)\psi(t)dt},\,\,x\in\mathbb R^n.
\end{equation}
Similarly, by taking $\Phi(t)=|t|^n\psi(t_1)\chi_{[0,1]^n}(t)$ and $A(t)= s(t_1).I_n$, with $s:[0,1]\to \mathbb R$ being a measurable function, it is easy to see that $H_{\Phi,A}$ reduces to the weighted Hardy-Ces\`{a}ro operator $U_{\psi,s}$  defined by  Chuong and Hung \cite{CH2014} as follows
\begin{equation}\label{Upsi}
U_{\psi,s}(f)(x)=\int\limits_{0}^{1}{f(s(t)x)\psi(t)dt},\,\,x\in\mathbb R^n.
\end{equation}
\vskip 5pt
In recent years, the theory of weighted Hardy-Littlewood average operators, Hardy-Ces\`{a}ro operators and  Hausdorff operators  have been significantly developed into different contexts (for more details see \cite{BM}, \cite{CH2014}, \cite{CDH2016}, \cite{CHH2016}, \cite{Moricz2005}, \cite{Xiao2001} and references therein). Also, remark that Coifman and Meyer \cite{Coifman1}, \cite{Coifman2} discovered  a multilinear point of view in their study of certain singular integral operators. Thus, the research of the theory of multilinear operators is not only attracted by a pure question to generalize the theory of linear ones but also by their deep applications in harmonic analysis. In 2015, Hung and Ky \cite{HK2015} introduced the weighted multilinear Hardy-Ces\`{a}ro type operators, which are generalized of weighted multilinear Hardy operators in \cite{FGLY2015}, defined as follows: 
\begin{equation}\label{MulUpsi}
U_{\psi,\vec {s}}^{m, n}(f_1,...,f_m)(x)=\int\limits_{[0,1]^n}{\Big(\prod\limits_{i=1}^{m}f_i(s_i(t)x)\Big)\psi(t)dt},\,\,x\in\mathbb R^n,
\end{equation}
where  $\psi:[0,1]^n\to [0,\infty)$, and $s_1,...,s_m:[0,1]^n\to \mathbb R$  are measurable functions. By the relation between Hausdorff operator and Hardy-Ces\`{a}ro operator as mentioned above, we shall introduce in this paper a more general multilinear operator of Hausdorff type defined as follows.
\begin{definition}
Let $\Phi:\mathbb R^n \to [0,\infty)$ and $A_i(t)$, for $i=1,...,m$, be $n\times n$  invertible matrices for almost everywhere $t$ in the support of $\Phi$. Given $f_1, f_2, ..., f_m:\mathbb R^n\to \mathbb C$ be measurable functions, the multilinear Hausdorff operator $H_{\Phi,\vec A}$ is defined  by
\begin{equation}\label{mulHausdorff}
{H_{\Phi ,\vec{A} }}(\vec{f})(x) = \int\limits_{{\mathbb R^n}} {\frac{{\Phi (t)}}{{{{\left| t \right|}^n}}}} \prod\limits_{i = 1}^m {{f_i}} ({A_i}(t)x)dt,\,x\in\mathbb R^n,
\end{equation}
for $\vec{f}=\left(f_1, ..., f_m\right)$ and $\vec{A}=\left(A_1, ..., A_m\right)$.
\end{definition}
It is obvious that when $\Phi(t)=|t|^n.\psi(t)\chi_{[0,1]^n}(t)$ and $A_i(t)= s_i(t).I_n$, where $\psi:[0,1]^n\to [0,\infty), s_1,...,s_m:[0,1]^n\to \mathbb R$ are  measurable functions, then the multilinear Hausdorff operator $H_{\Phi,\vec A}$ reduces to the weighted multilinear Hardy-Ces\`{a}ro operator $U_{\psi,\vec {s}}^{m, n}$ above.
\vskip 5pt
 It is also interesting that the theory of function spaces with variable exponents has attracted much more attention because of the necessary  in the field of electronic fluid mechanics and its important applications to the elasticity, fluid dynamics, recovery of graphics, and differential equations (see \cite{Almeida}, \cite{Chuong},  \cite{Chuong2016}, \cite{Diening}, \cite{H2000},  \cite{Jacob},  \cite{CUF2013}). The foundational results and powerful applications of  some function spaces with variable exponents in harmonic analysis and partial differential equations are given in the books \cite{CUF2013}, \cite{Diening1} and the references therein. It is well-known that the Calder\'{o}n-Zygmund singular operators, the Hardy-type operators and their commutators have been extensively investigated on  the Lebesgue, Herz, Morrey, and  Morrey-Herz spaces with variable exponent (see, e.g., \cite{Bandaliev2010}, \cite{Capone}, \cite{Cruz-Uribe}, \cite{Guliyev}, \cite{Mamedov}, \cite{Mashiyev},  \cite{LZ2014}, \cite{Rafeiro}, \cite{WZ2016}, and others).
\vskip 5pt
Motivated by above mentioned results,  the goal of this paper is to establish the necessary and sufficient conditions for the boundedness of multilinear Hausdorff operators on the product of weighted Lebesgue, central Morrey, Herz, and Morrey-Herz spaces with variable exponent.  In each case, the estimates for operator norms are worked out. It should be pointed out that all results in this paper are new even in the case of linear Hausdorff operators.
\vskip 5pt
Our paper is organized as follows. In Section 2, we give necessary preliminaries on weighted Lebesgue spaces, central Morrey spaces, Herz spaces and Morrey-Herz spaces with variable exponent. Our main theorems are given and proved in Section 3.
\section{Preliminaries}\label{section2}
Before stating our results in the next section, let us give some basic facts and notations which will be used throughout this paper. By $\|T\|_{X\to Y}$, we denote the norm of $T$ between two normed vector spaces $X$ and $Y$. The letter $C$ denotes a positive constant which is independent of the main parameters, but may be different from line to line. Given a measurable set $\Omega$, let us denote by $\chi_\Omega$ its characteristic function, by $|\Omega|$ its Lebesgue measure, and by $\omega(\Omega)$  the integral $\int\limits_{\Omega}\omega(x)dx$. For any $a\in\mathbb R^n$ and $r>0$, we denote by $B(a,r)$ the ball centered at $a$ with radius $r$.
\vskip 5pt
Next, we write $a \lesssim b$ to mean that there is a positive constant $C$, independent of the main parameters, such that $a\leq Cb$. The symbol $f\simeq g$ means that $f$ is equivalent to $g$ (i.e.~$C^{-1}f\leq g\leq Cf$). 

In what follows, we denote $\chi_k=\chi_{C_k}$, $C_k=B_k\setminus B_{k-1}$ and $B_k = \big\{x\in \mathbb R^n: |x| \leq 2^k\big\}$, for all $k\in\mathbb Z$. Now, we are in a position to give some notations and definitions of Lebesgue, Herz, Morrey and Morrey-Herz spaces with constant parameters. For further information on these spaces as well as their deep applications in analysis, the interested readers may refer to the work \cite{ALP} and to the monograph \cite{Lu}.

In this paper, as usual, we will denote by $\omega(\cdot)$ a non-negative weighted function on $\mathbb R^n$.
\begin{definition} Let $1\leq p<\infty$, we define the weighted Lebesgue space $L^p(\omega)$ of a measurable function $f$ by
\[
\big\|f\big\|_{L^p(\omega)} = \left(\int\limits_{\mathbb R^n} {{{\left| {f(x)} \right|^p\omega(x)}}dx}  \right)^{\frac{1}{p}}<\infty,
\]
and for $p=\infty$ by
\[
\big\|f\big\|_{L^\infty(\omega)} = {\rm inf} \big\{M>0: \omega\big(\big\{x\in \mathbb R^n:|f(x)|>M\big\}\big)=0 \big\}<\infty.
\]
\end{definition}
\begin{definition}
Let $\alpha\in\mathbb R$, $0<q<\infty$, and $0<p<\infty$. The weighted homogeneous Herz-type space ${\mathop{K}\limits^.}^{\alpha, p}_q(\omega)$ is defined by
\[
{\mathop{K}\limits^.}^{\alpha, p}_q(\omega)=\left\{f\in L^q_\text {loc}(\mathbb R^n\setminus\{0\},\omega):\|f\|_{{\mathop{K}\limits^.}^{\alpha,p}_q(\omega)} <\infty \right \},
\]
where $\|f\|_{{\mathop{K}\limits^.}^{\alpha,p}_q(\omega)}=\left(\sum\limits_{k=-\infty}^\infty 2^{k\alpha p}\|f\chi_k\|^p_{L^q(\omega)}\right)^{\frac{1}{p}}.$
\end{definition}

\begin{definition}
Let $\lambda \in \mathbb R$ and $1\leq p<\infty$. The weighted central Morrey space ${\mathop B\limits^.}^{p,\lambda}({\omega})$
is defined by the set of all locally $p$-integrable functions $f$ satisfying
\[
{\left\| f \right\|_{{\mathop B\limits^.}^{p,\lambda}({\omega})}} = \mathop {\sup }\limits_{R\, > 0} {\left( {\dfrac{1}{{{\omega}(B(0,R))^{1+\lambda p}}}\int\limits_{B(0,R)} {{{\left| {f(x)} \right|}^p}\omega(x)dx} } \right)^{\frac{1}{p}}} < \infty .
\]
\end{definition}
\begin{definition}
Let $\alpha \in \mathbb R$, $0 < p < \infty, 0 < q <\infty, \lambda \geq 0$. The homogeneous weighted Morrey-Herz type space ${M\mathop{K}\limits^.}^{\alpha,\lambda}_{p,q}(\omega)$ is defined by
\[
{M\mathop{K}\limits^.}^{\alpha,\lambda}_{p,q}(\omega)=\left\{f\in L^q_\text {loc}(\mathbb R^n\setminus\{0\},\omega):\|f\|_{{M\mathop{K}\limits^.}^{\alpha,\lambda}_{p,q}(\omega)} <\infty \right \},
\]
where $\|f\|_{{M\mathop{K}\limits^.}^{\alpha,\lambda}_{p,q}(\omega)}=\sup\limits_{k_0\in\mathbb Z}2^{-k_0\lambda}\left(\sum\limits_{k=-\infty}^{k_0} 2^{k\alpha p} \|f\chi_k\|^p_{L^q(\omega)}\right)^{\frac{1}{p}}.$
\end{definition}
\hspace {-15pt} {\bfseries Remark 1.} It is useful to note that ${\mathop{K}\limits^.}^{0,p}_{p}(\mathbb R^n)= L^p(\mathbb R^n)$ for $0<p<\infty$; ${\mathop K\limits^.}^{\alpha/p,p}_{p}(\mathbb R^n)= L^p(|x|^\alpha dx)$ for all $0<p<\infty$ and $\alpha\in\mathbb R$. Since ${M\mathop{K}\limits^.}^{\alpha,0}_{p,q}(\mathbb R^n)$ =${\mathop{K}\limits^.}^{\alpha, p}_{q}(\mathbb R^n)$, it follows that the Herz space is a special case of Morrey-Herz space. Therefore, the Herz spaces are natural generalisations of the Lebesgue spaces with power weights.
\vskip 5pt
Now, we present the definition of the Lebesgue  space with variable exponent. For further readings on its deep applications in harmonic analysis, the interested reader may find in the works \cite{CUF2013}, \cite{Diening} and  \cite{Diening1}.
\begin{definition}
Let $\mathcal P(\mathbb R^n)$ be the set of all measurable functions $p(\cdot):\mathbb R\to [1,\infty]$. For $p(\cdot)\in \mathcal P(\mathbb R^n)$, the variable exponent Lebesgue space $L^{p(\cdot)}(\mathbb R^n)$ is the set of all complex-valued measurable functions $f$ defined on $\mathbb R^n$ such that there exists constant $\eta >0$ satisfying
\[
F_{p}(f/\eta):=\int\limits_{\mathbb R^n\setminus\Omega_\infty}\left({\frac{|f(x)|}{\eta}}\right)^{p(x)}dx +\big\|f/\eta\big\|_{L^{\infty}(\Omega_\infty)}<\infty,
\]
where $\Omega_{\infty}=\big\{ x\in\mathbb R^n: p(x)=\infty\big\}$. When $|\Omega_{\infty}|=0$, it is straightforward
\[
F_{p}(f/\eta)=\int\limits_{\mathbb R^n}\left({\frac{|f(x)|}{\eta}}\right)^{p(x)}dx<\infty.
\]
\end{definition}
The variable exponent Lebesgue space $L^{p(\cdot)}(\mathbb R^n)$ then becomes a norm space equipped with a norm given by
$$\left\|f\right\|_{L^{p(\cdot)}}= \inf \left\{\eta>0: F_p\left(\frac{f}{\eta}\right)\leq 1\right\}.$$
Let us denote by $\mathcal P_{b}(\mathbb R^n)$ the class of exponents $q(\cdot)\in\mathcal P(\mathbb R^n)$ such that
\[
1 < q_{-}\leq q(x) \leq q_{+}<\infty,\,  \text{ for all }\,x\in\mathbb R^n,
\]
where $q_{-}= \text{ess\,inf}_{x\in\mathbb R^n}q(x)$ and $q_{+}= \text{ess\,sup}_{x\in\mathbb R^n}q(x)$.
For $p\in\mathcal P_{b}(\mathbb R^n)$, it is useful to remark that we have the following inequalities which are usually used in the sequel.
\begin{eqnarray}\label{maxminvar}
&&\hskip -30pt [i]\,\, \textit{ \rm If }\, F_{p}(f)\leq C, \textit{ \rm then}\, \big\|f\big\|_{L^{p(\cdot)}} \leq \textit{\rm max}\big\{ {C}^{\frac{1}{q_-}}, {C}^{\frac{1}{q_+}} \big\},\,\textit{\rm for all}\,\, f\in L^{p(\cdot)}(\mathbb R^n),
\nonumber
\\
&&\hskip - 30pt [ii]\textit{ \rm If }\, F_{p}(f)\geq C, \textit{ \rm then}\, \big\|f\big\|_{L^{p(\cdot)}}\geq \textit{\rm min}\big\{ {C}^{\frac{1}{q_-}}, {C}^{\frac{1}{q_+}} \big\},\,\textit{\rm for all}\, f\in L^{p(\cdot)}(\mathbb R^n).
\end{eqnarray}
\\
The space ${\mathcal P}_\infty(\mathbb R^n)$ is defined by the set of all measurable functions $q(\cdot)\in\mathcal P(\mathbb R^n)$ and there exists a constant $q_\infty$ such that
\[
q_{\infty} =\lim\limits_{|x|\to\infty}{q(x)}.
\]
\\
For $p(\cdot)\in\mathcal P(\mathbb R^n)$, the weighted variable exponent Lebesgue space $L^{p(\cdot)}_\omega(\mathbb R^n)$ is the set of all complex-valued measurable functions $f$ such that $f\omega$ belongs to the $L^{p(\cdot)}(\mathbb R^n)$ space, and the norm of $f$ in  $L^{p(\cdot)}_\omega(\mathbb R^n)$ is given by
\[
\big\|f\big\|_{L^{p(\cdot)}_\omega}=\big\|f\omega\big\|_{L^{p(\cdot)}}.
\]
Let $\mathbf C_0^{\text{log}}(\mathbb R^n) $ denote the set of  all log-H\"{o}lder continuous functions $\alpha (\cdot)$ satisfying at the origin
\[
\left|\alpha(x)-\alpha(0)\right|\leq \dfrac{C_0^\alpha}{\text{log}\left(e+\frac{1}{|x|}\right)},\,  \text{ for all }\, x\in\mathbb R^n.
\]
Denote by $\mathbf C_\infty^{\text{log}}(\mathbb R^n) $  the set of all log-H\"{o}lder continuous functions $\alpha (\cdot)$ satisfying at infinity
\[
\left|\alpha(x)-\alpha_{\infty}\right|\leq \dfrac{C_\infty^\alpha}{\text{log}(e+|x|)},\,  \text{ for all }\, x\in\mathbb R^n,
\]
\vskip 5pt
Next, we would like to give the definition of variable exponent weighted Herz spaces ${\mathop{K}\limits^.}^{\alpha(\cdot),p}_{q(\cdot),\omega}$ and the definition of variable exponent weighted Morrey-Herz spaces ${M\mathop{K}\limits^.}^{\alpha(\cdot),\lambda}_{p,q(\cdot),\omega}$ (see \cite{LZ2014}, \cite{WZ2016} for more details).
\begin{definition}
Let $0<p<\infty,  q(\cdot)\in \mathcal P_b(\mathbb R^n)$ and $\alpha(\cdot):\mathbb R^n\to \mathbb R$ with $\alpha(\cdot)\in L^{\infty}(\mathbb R^n)$. The variable exponent weighted Herz space ${\mathop{K}\limits^.}^{\alpha(\cdot),p}_{q(\cdot),\omega}$ is defined
by
\[
{\mathop{K}\limits^.}^{\alpha(\cdot),p}_{q(\cdot),\omega}=\left\{f\in L^{q(\cdot)}_\text {loc}(\mathbb R^n\setminus\{0\}):\|f\|_{{\mathop{K}\limits^.}^{\alpha(\cdot),p}_{q(\cdot),\omega}} <\infty \right \},
\]
where $\|f\|_{{\mathop{K}\limits^.}^{\alpha(\cdot),p}_{q(\cdot),\omega}}=\left(\sum\limits_{k=-\infty}^{\infty} \|2^{k\alpha(\cdot)} f\chi_k\|^p_{L^{q(\cdot)}_\omega}\right)^{\frac{1}{p}}.$
\end{definition}
\begin{definition}
Assume that $0\leq \lambda<\infty, 0<p<\infty,  q(\cdot)\in \mathcal P_b(\mathbb R^n)$ and $\alpha(\cdot):\mathbb R^n\to \mathbb R$ with $\alpha(\cdot)\in L^{\infty}(\mathbb R^n)$. The variable exponent weighted Morrey-Herz space ${M\mathop{K}\limits^.}^{\alpha(\cdot),\lambda}_{p,q(\cdot), \omega}$ is defined
by
\[
{M\mathop{K}\limits^.}^{\alpha(\cdot),\lambda}_{p,q(\cdot),\omega}=\left\{f\in L^{q(\cdot)}_\text {loc}(\mathbb R^n\setminus\{0\}):\|f\|_{{M\mathop{K}\limits^.}^{\alpha(\cdot),\lambda}_{p,q(\cdot),\omega}} <\infty \right \},
\]
where $\|f\|_{{M\mathop{K}\limits^.}^{\alpha(\cdot),\lambda}_{p,q(\cdot),\omega}}=\sup\limits_{k_0\in\mathbb Z}2^{-k_0\lambda}\left(\sum\limits_{k=-\infty}^{k_0} \|2^{k\alpha(\cdot)} f\chi_k\|^p_{L^{q(\cdot)}_\omega}\right)^{\frac{1}{p}}.$
\end{definition}
Note that, when $p(\cdot)$, $q(\cdot)$ and $\alpha(\cdot)$ are constant, it is obvious to see that
\begin{equation}\label{rel-func}
L^{p}_{{\omega}^{1/p}}= L^{p}(\omega),\,\,{\mathop{K}\limits^.}^{\alpha,p}_{q,{\omega}^{1/p}}={\mathop{K}\limits^.}^{\alpha,p}_{q}(\omega)\,\,\textit{\rm and}\,\,{{M\mathop{K}\limits^.}^{\alpha,\lambda}_{p,q,{\omega}^{1/p}}}={{M\mathop{K}\limits^.}^{\alpha,\lambda}_{p,q}(\omega)}.
\end{equation}

Because of defining of weighted Morrey-Herz spaces with variable exponent and Proposition 2.5 in \cite{LZ2014}, we have the following result. The proof is trivial and may be found in \cite{WZ2016}.
\begin{lemma}\label{lemmaVE}
Let $\alpha(\cdot)\in L^{\infty}(\mathbb R^n)$, $q(\cdot)\in \mathcal P_b(\mathbb R^n)$, $p\in(0, \infty)$ and $\lambda\in [0,\infty)$. If $\alpha(\cdot)$ is log-H\"{o}lder continuous
both at the origin and at infinity, then
\[
\big\|f\chi_j\big\|_{L^{q(\cdot)}_\omega} \leq C. 2^{j(\lambda-\alpha(0))}\big\|f\big\|_{{M\mathop{K}\limits^.}^{\alpha(\cdot),\lambda}_{p,q(\cdot),\omega}},\,\,\textit{for all}\,\, j\in\mathbb Z^-,
\]
and
\[
\big\|f\chi_j\big\|_{L^{q(\cdot)}_\omega} \leq C. 2^{j(\lambda-\alpha_\infty)}\big\|f\big\|_{{M\mathop{K}\limits^.}^{\alpha(\cdot),\lambda}_{p,q(\cdot),\omega}},\,\,\textit{for all}\,\, j\in\mathbb N.
\]
\end{lemma}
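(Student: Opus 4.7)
The plan is to reduce both inequalities to a single observation: on the annulus $C_j$, the function $2^{j\alpha(\cdot)}$ is pointwise comparable to a constant (either $2^{j\alpha(0)}$ for $j \leq 0$ or $2^{j\alpha_\infty}$ for $j \geq 1$), after which the definition of the Morrey-Herz norm gives the result immediately.

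\smallskip

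\noindent\textbf{Step 1: Extract a pointwise bound from the norm.} For any fixed $j \in \mathbb{Z}$, take $k_0 = j$ in the supremum defining the Morrey-Herz norm. Keeping only the $k=j$ term in the sum yields
\[
\bigl\|2^{j\alpha(\cdot)} f\chi_j\bigr\|_{L^{q(\cdot)}_\omega} \,\le\, 2^{j\lambda}\,\|f\|_{{M\mathop{K}\limits^.}^{\alpha(\cdot),\lambda}_{p,q(\cdot),\omega}}.
\]

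\noindent\textbf{Step 2: Compare $2^{j\alpha(x)}$ with a constant on $C_j$.} For $j \in \mathbb{Z}^-$ and $x \in C_j$ one has $|x| \le 2^j \le 1$, so $\log(e+1/|x|) \ge \log(e + 2^{-j}) \gtrsim |j|$. Log-Hölder continuity at the origin then gives
\[
\bigl|j(\alpha(x)-\alpha(0))\bigr| \,\le\, \frac{|j|\,C_0^\alpha}{\log(e+1/|x|)} \,\lesssim\, C_0^\alpha,
\]
so $2^{j\alpha(x)} \simeq 2^{j\alpha(0)}$ uniformly on $C_j$. Symmetrically, for $j \in \mathbb{N}$ and $x \in C_j$ we have $|x| \simeq 2^j$, hence $\log(e+|x|) \gtrsim j$, and log-Hölder continuity at infinity gives $|j(\alpha(x)-\alpha_\infty)| \lesssim C_\infty^\alpha$, i.e.\ $2^{j\alpha(x)} \simeq 2^{j\alpha_\infty}$ on $C_j$.

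\smallskip

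\noindent\textbf{Step 3: Pull the scalar out of the norm and conclude.} The uniform comparability from Step 2 implies
\[
\bigl\|2^{j\alpha(\cdot)} f\chi_j\bigr\|_{L^{q(\cdot)}_\omega} \simeq 2^{j\alpha(0)} \bigl\|f\chi_j\bigr\|_{L^{q(\cdot)}_\omega}\quad (j \in \mathbb{Z}^-),
\]
and likewise with $\alpha_\infty$ for $j \in \mathbb{N}$. Dividing the inequality of Step 1 by this factor yields the two desired bounds $\|f\chi_j\|_{L^{q(\cdot)}_\omega} \le C\,2^{j(\lambda-\alpha(0))}\|f\|_{M\dot K^{\alpha(\cdot),\lambda}_{p,q(\cdot),\omega}}$ and $\|f\chi_j\|_{L^{q(\cdot)}_\omega} \le C\,2^{j(\lambda-\alpha_\infty)}\|f\|_{M\dot K^{\alpha(\cdot),\lambda}_{p,q(\cdot),\omega}}$.

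\smallskip

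\noindent\textbf{Main obstacle.} The only delicate point is Step 2, where one must check that the logarithmic modulus of continuity of $\alpha(\cdot)$ exactly cancels the linear growth of $|j|$, so that the multiplicative discrepancy $2^{j(\alpha(x)-\alpha(0))}$ stays bounded away from $0$ and $\infty$ independently of $j$. Once this telescoping is verified, pulling the constant $2^{j\alpha(0)}$ (or $2^{j\alpha_\infty}$) out of the variable-exponent norm is legitimate because it does not depend on $x$, and the conclusion follows with no further work.
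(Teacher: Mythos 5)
Your proof is correct, and it is essentially the argument the paper relies on: the paper defers to Proposition 2.5 of [LZ2014] (via [WZ2016]), whose content is exactly your Step 2--3 equivalence $\|2^{j\alpha(\cdot)}f\chi_j\|_{L^{q(\cdot)}_\omega}\simeq 2^{j\alpha(0)}\|f\chi_j\|_{L^{q(\cdot)}_\omega}$ for $j\le 0$ (resp.\ with $\alpha_\infty$ for $j\ge 1$), combined with the trivial Step 1 truncation of the Morrey--Herz norm. The log-H\"older cancellation and the lattice/homogeneity properties of the Luxemburg norm that you invoke are exactly what make that equivalence work, so no gap remains.
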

We also extend to define two-weight $\lambda$-central Morrey spaces with variable-exponent as follows.
\begin{definition}
For $\lambda \in \mathbb R$ and $p\in\mathcal P_\infty(\mathbb R^n)$, we denote ${\mathop B\limits^.}_{\omega_1,\omega_2}^{p(\cdot),\lambda}$ the class of locally integrable functions $f$ on $\mathbb R^n$ satisfying 
\[
{\big\| f \big\|_{{\mathop B\limits^.}_{\omega_1,\omega_2}^{p(\cdot),\lambda }}} = \mathop {\sup }\limits_{R\, > 0}  {\frac{1}{{{\omega_1}\big(B(0,R)\big)^{\lambda+\frac{1}{p_{\infty}}}}}} \big\|f\big\|_{L^{p(\cdot)}_{\omega_2}(B(0,R))}< \infty,
\]
where $\big\|f\big\|_{L^{p(\cdot)}_{\omega_2}(B(0,R))}=\big\|f\chi_{B(0,R)}\big\|_{L^{p(\cdot)}_{\omega_2}}$
and $\omega_1$, $\omega_2$ are non-negative and local integrable functions.  Moreover, as $p(\cdot)$ is constant and $\omega_1=\omega$ and $\omega_2=\omega^{\frac{1}{p}}$, it is natural to see that  ${\mathop B\limits^.}_{\omega,\omega^{1/p}}^{p(\cdot),\lambda}={\mathop B\limits^.}^{p,\lambda}({\omega})$.
\end{definition}
Later, the next theorem is stated as an embedding result for the Lebesgue spaces with variable exponent (see, for example, Theorem 2 in \cite{Bandaliev2010}, Theorem 2.45 in \cite{CUF2013}, Lemma 3.3.1 in \cite{Diening1}).
\begin{theorem}\label{theoembed}
Let $p(\cdot), q(\cdot)\in\mathcal P(\mathbb R^n)$ and $q(x)\leq p(x)$ almost everywhere $x\in \mathbb R^n$, and
\[
\frac{1}{r(\cdot)}:=\frac{1}{q(\cdot)}-\frac{1}{p(\cdot)}\,\,\textit{\rm and}\,\, \big\|1\big\|_{L^{r(\cdot)}}<\infty.
\]
Then there exists a constant K such that
\[
\big\|f\big\|_{L^{q(\cdot)}_\omega}\leq K \big\|1\big\|_{L^{r(\cdot)}}\big\|f\big\|_{L^{p(\cdot)}_\omega}.
\]
\end{theorem}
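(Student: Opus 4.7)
The plan is to deduce the claim from the generalized H\"older inequality for variable-exponent Lebesgue spaces, which is the standard mechanism behind the cited references. First, I would reduce to the unweighted setting: by the very definition of the weighted norm, $\|f\|_{L^{s(\cdot)}_\omega} = \|f\omega\|_{L^{s(\cdot)}}$ for every exponent $s(\cdot)\in\mathcal P(\mathbb R^n)$, so setting $g := f\omega$ it suffices to establish the unweighted embedding
\[
\|g\|_{L^{q(\cdot)}} \leq K\,\|1\|_{L^{r(\cdot)}}\,\|g\|_{L^{p(\cdot)}}
\]
under the same exponent relation $\frac{1}{q(\cdot)} = \frac{1}{p(\cdot)} + \frac{1}{r(\cdot)}$. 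This reduction removes $\omega$ from the argument entirely.

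Second, the hypothesis $q(x)\leq p(x)$ a.e.\ guarantees that $r(\cdot)$ is a well-defined element of $\mathcal P(\mathbb R^n)$ and that the pointwise reciprocal identity above holds. I would then apply the generalized H\"older inequality for variable-exponent Lebesgue spaces,
\[
\|uv\|_{L^{q(\cdot)}} \leq K\,\|u\|_{L^{p(\cdot)}}\,\|v\|_{L^{r(\cdot)}},
\]
with $u = g$ and $v \equiv 1$. This gives the required bound in one stroke, and the hypothesis $\|1\|_{L^{r(\cdot)}} < \infty$ is exactly what makes the right-hand side finite.

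The generalized H\"older step is the heart of the matter. Its proof rests on the pointwise Young-type bound
\[
(ab)^{q(x)} \leq \tfrac{q(x)}{p(x)}\,a^{p(x)} + \tfrac{q(x)}{r(x)}\,b^{r(x)}, \qquad a,b\geq 0,
\]
available because $p(x)/q(x)$ and $r(x)/q(x)$ are conjugate exponents wherever finite. After rescaling $u$ and $v$ so that $\|u\|_{L^{p(\cdot)}} = \|v\|_{L^{r(\cdot)}} = 1$, one integrates this bound over $\mathbb R^n$ and uses the one-sided modular control $F_s(h/\|h\|_{L^{s(\cdot)}}) \leq 1$ to conclude $F_q(uv) \lesssim 1$, which via property \textit{[i]} of (\ref{maxminvar})-type bounds yields a norm estimate up to an absolute constant.

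The main obstacle, if any, is the careful bookkeeping around the set $\Omega_\infty := \{x : s(x) = \infty\}$ appearing for exponents outside $\mathcal P_b(\mathbb R^n)$: on $\Omega_\infty$ the modular $F_s$ uses the $L^\infty$ piece rather than the integral piece, and one has to split the argument accordingly and match the $L^\infty$ H\"older bound. A secondary subtlety is that the modular is not homogeneous, so scaling arguments do not transfer from the classical $L^p$ case mechanically. Once the generalized H\"older inequality is in hand, however, the proof of the theorem is a one-line substitution followed by the identification $\|f\omega\|_{L^{s(\cdot)}} = \|f\|_{L^{s(\cdot)}_\omega}$.
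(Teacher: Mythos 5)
The paper gives no proof of this theorem at all---it is simply quoted from the literature (Theorem 2 in \cite{Bandaliev2010}, Theorem 2.45 in \cite{CUF2013}, Lemma 3.3.1 in \cite{Diening1})---and your argument is exactly the standard proof those sources use: reduce to the unweighted case via $\|f\|_{L^{s(\cdot)}_\omega}=\|f\omega\|_{L^{s(\cdot)}}$, apply the generalized H\"older inequality with one factor identically $1$, and derive H\"older from the pointwise Young inequality together with modular estimates. Your reconstruction is correct; the one detail to tighten is that after normalizing and obtaining $F_q(uv)\le 2$ you should pass to the norm bound via convexity of the modular (e.g.\ $F_q(uv/2)\le\tfrac12 F_q(uv)\le 1$, valid since $q(\cdot)\ge 1$) rather than via (\ref{maxminvar}), because those inequalities are stated only for exponents in $\mathcal P_b(\mathbb R^n)$ whereas the theorem permits unbounded $q(\cdot)$ and a nontrivial set $\Omega_\infty$.
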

\section{Main results and their proofs}
Before stating the next main results, we introduce some notations which will be used throughout this section. Let $ \gamma_1,...,\gamma_m\in\mathbb R,\lambda_1,...,\lambda_m \geq 0, p, p_i\in (0,\infty)$, $q_i\in\mathcal P_b(\mathbb R^n)$ for $i=1,...,m$ and $\alpha_1,...,\alpha_m\in L^\infty(\mathbb R^n)\cap \mathbf C_0^{\text{log}}(\mathbb R^n)\cap \mathbf C_{\infty}^{\text{log}}(\mathbb R^n)$. The functions $\alpha(\cdot),q(\cdot)$ and numbers $\gamma,\lambda$ are defined by
$$   {\alpha _1(\cdot)} + \cdots + {\alpha _m(\cdot)} = \alpha(\cdot),  $$
  $$\frac{1}{{{q_1(\cdot)}}}+ \cdots  + \frac{1}{{{q_m(\cdot)}}} = \frac{1}{q(\cdot)}, $$
  $$\gamma_1+\cdots+\gamma_m=\gamma,$$
$$  {\lambda _1} + {\lambda _2} + \cdots + {\lambda _m} = \lambda. $$
Thus, it is clear to see that the function $\alpha$ also belongs to the $L^\infty(\mathbb R^n)\cap \mathbf C_0^{\text{log}}(\mathbb R^n)\cap \mathbf C_{\infty}^{\text{log}}(\mathbb R^n)$ space.
\vskip 5pt
For a matrix $A=(a_{ij})_{n\times n}$, we define the norm of $A$ as follow
\begin{equation}\label{normB}
\left\|A\right\|=\left(\sum\limits_{i,j=1}^{n}{{|a_{ij}|}^2}\right)^{1/2}.
\end{equation}
As above we conclude $\left|Ax\right|\leq \left\|A\right\|\left|x\right|$ for any vector $x\in\mathbb R^n$. In particular, if $A$ is invertible,  then we find
\begin{equation}\label{detA}
\left\|A\right\|^{-n}\leq \left|\rm det(A^{-1})\right|\leq \left\|A^{-1}\right\|^{n}.
\end{equation}
In this section, we will  investigate the boundedness of multilinear Hausdorff operators on variable exponent Herz, central Morrey and Morrey-Herz spaces with power weights associated to the case  of matrices having the important property as follows: there exists ${\rho _{\vec{A} }} \geq 1$ such that 
\begin{equation}\label{DK1}
\left\| {{A_i}(t)} \right\|.\left\| {A_i^{ - 1}(t)} \right\| \leq {\rho _{\vec{A} }}, \,\,\,{\text{\rm for all  }}\,i=1,...,m,
\end{equation}
for almost everywhere $t \in \mathbb R^n$. Thus, by the property of invertible matrice, it is easy  to show that
\begin{equation}\label{DK1.1}
\left\| A_i(t) \right\|^\sigma \lesssim \left\| A_i^{-1}(t)\right\|^{-\sigma},\,\textit{\rm for all }\,\sigma \,\in \mathbb R,
\end{equation}
and
\begin{equation}\label{DK1.2}
|A_i(t)x|^\sigma \gtrsim \left\| A_i^{-1}(t)\right\|^{-\sigma}.|x|^{\sigma}, \,\textit{\rm for all }\, \sigma\in\mathbb R,x\in \mathbb R^n.
\end{equation}
{\bfseries Remark 2.} If  $A(t)=(a_{ij}(t))_{n\times n}$ is a real orthogonal matrix for almost everywhere $t$ in $\mathbb R^n$, then $A(t)$ satisfies the property (\ref{DK1}). Indeed, we know that the definition of the matrix norm (\ref{normB}) is the special case of Frobenius matrix norm. We recall the property of the above norm as follows
\[
\sqrt{\rho(B^*.B)}\leq \big\|B\big\|\leq \sqrt{n}.\sqrt{\rho(B^*.B)},\,\, {\text {for all}}\, \,B\in \, M_n(\mathbb C),
\]
where $B^*$ is the conjugate matrix of $B$ and $\rho(B)$ is the largest modulus of the eigenvalues of $B$. Thus, since $A^{-1}(t)$ is also a real orthogonal matrix, we get
$$ \left\|A(t)\right\|\leq \sqrt{n}\,\,\text{\rm and}\,\, \left\|A^{-1}(t)\right\|\leq \sqrt{n},$$
which immediately obtain the desired result. Now, we are ready to state our first main result in this paper.

\begin{theorem}\label{TheoremVarLebesgue}
Let $\omega_1(x)=|x|^{\gamma_1}, ... , \omega_m(x)=|x|^{\gamma_m}, \omega(x)=|x|^{\gamma}$, $q(\cdot)\in \mathcal P_b(\mathbb R^n)$, $\zeta >0$ and the following conditions are true:
\begin{equation}\label{DKVarLebesgue}
q_i(A_i^{-1}(t)\cdot)\leq \zeta.q_i(\cdot)\,\textit{\rm and}\,\, \big\|1\big\|_{L^{r_{i}(t,\cdot)}}<\infty, \, \text{a.e. }\,t\in \textit{\rm supp}(\Phi),
\end{equation}
\begin{equation}\label{DKVarLebesgue1}
\mathcal C_1= \int\limits_{\mathbb R^n}\frac{\Phi(t)}{|t|^n}\prod\limits_{i=1}^{m} c_{A_i,q_i,\gamma_i}(t).\big\|1\big\|_{L^{r_{i}(t,\cdot)}}dt<\infty,
\end{equation}
where 
\[
{c_{A_i,q_i,\gamma_i}(t)} = {{{\rm max }}\big\{ {{{\big\| {{A_i}(t)} \big\|}^{ - {\gamma _i}}},{{\big\| {A_i^{ - 1}(t)} \big\|}^{{\gamma _i}}}} \big\}}{\rm max}\big\{ {\left| {\det A_i^{ - 1}(t)} \right|^{\frac{1}{q_{i+}}}},{\left| {\det A_i^{ - 1}(t)} \right|^{\frac{1}{q_{i-}}}}\big\},
\]
\[
\dfrac{1}{r_i(t,\cdot)}=\dfrac{1}{q_i(A_i^{-1}(t)\cdot)}-\dfrac{1}{\zeta q_i(\cdot)},\,\textit{\rm for all} \,i=1,...,m.
\]
Then, $H_{\Phi,\vec{A}}$ is a bounded operator from $L^{\zeta q_1(\cdot)}_{\omega_1}\times\cdots\times L^{\zeta q_m(\cdot)}_{\omega_m}$ to $L^{ q(\cdot)}_{\omega}$.
\end{theorem}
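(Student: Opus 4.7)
The plan is to reduce the estimate, via Minkowski's integral inequality for the variable exponent norm, to a pointwise-in-$t$ estimate on
$$\Bigl\| \prod_{i=1}^m f_i(A_i(t)\,\cdot)\Bigr\|_{L^{q(\cdot)}_\omega}.$$
Since the weight factorizes as $|x|^\gamma = \prod_{i=1}^m |x|^{\gamma_i}$ thanks to $\gamma_1+\cdots+\gamma_m=\gamma$, and since $\tfrac{1}{q(\cdot)} = \sum \tfrac{1}{q_i(\cdot)}$, the generalized Hölder inequality for variable exponent Lebesgue spaces splits this into a product of single factors
$$\bigl\|f_i(A_i(t)\,\cdot)\,|\cdot|^{\gamma_i}\bigr\|_{L^{q_i(\cdot)}}.$$
It then suffices to dominate each such factor by $c_{A_i,q_i,\gamma_i}(t)\,\|1\|_{L^{r_i(t,\cdot)}}\,\|f_i\|_{L^{\zeta q_i(\cdot)}_{\omega_i}}$ and insert into the $t$-integral to recover $\mathcal{C}_1$.

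For the pointwise-in-$t$ estimate I would work at the modular level. Changing variables $y=A_i(t)x$ in the defining integral of the Luxemburg norm transforms the exponent $q_i(x)$ into $q_i(A_i^{-1}(t)y)$, multiplies the integrand by the Jacobian $|\det A_i^{-1}(t)|$, and replaces $|x|^{\gamma_i}$ by $|A_i^{-1}(t)y|^{\gamma_i}$. Using (\ref{DK1}), (\ref{DK1.1}), (\ref{DK1.2}) with $B=A_i^{-1}(t)$ gives
$$|A_i^{-1}(t)y|^{\gamma_i} \lesssim \max\bigl\{\|A_i(t)\|^{-\gamma_i},\,\|A_i^{-1}(t)\|^{\gamma_i}\bigr\}\,|y|^{\gamma_i}.$$
The Jacobian is absorbed into the norm by the elementary observation that $|\det A_i^{-1}(t)| \le D_i(t)^{q_i(A_i^{-1}(t)y)}$ with $D_i(t)= \max\{|\det A_i^{-1}(t)|^{1/q_{i+}}, |\det A_i^{-1}(t)|^{1/q_{i-}}\}$, splitting on whether $|\det A_i^{-1}(t)|$ is $\ge 1$ or $<1$ and using $q_i\in[q_{i-},q_{i+}]$. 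These two estimates combine to give
$$\bigl\|f_i(A_i(t)\,\cdot)\,|\cdot|^{\gamma_i}\bigr\|_{L^{q_i(\cdot)}} \,\lesssim\, c_{A_i,q_i,\gamma_i}(t)\,\bigl\|f_i\,\omega_i\bigr\|_{L^{q_i(A_i^{-1}(t)\,\cdot)}}.$$

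Finally I apply the embedding Theorem \ref{theoembed} with $q=q_i(A_i^{-1}(t)\cdot)$ and $p=\zeta q_i(\cdot)$: the hypothesis (\ref{DKVarLebesgue}) is precisely $q \le p$, the associated gap exponent is $r_i(t,\cdot)$ with $\|1\|_{L^{r_i(t,\cdot)}}<\infty$, and the theorem yields
$$\bigl\|f_i\,\omega_i\bigr\|_{L^{q_i(A_i^{-1}(t)\,\cdot)}} \,\le\, K\,\|1\|_{L^{r_i(t,\cdot)}}\,\|f_i\|_{L^{\zeta q_i(\cdot)}_{\omega_i}}.$$
Multiplying the $m$ factor bounds, integrating against $\Phi(t)/|t|^n$, and recognizing the integral as $\mathcal{C}_1$ gives the claimed boundedness. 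The main technical obstacle is the second step: the interplay between the linear change of variables and a variable exponent forces one to work directly with the modular rather than with the norm, and it is there that the precise form of $c_{A_i,q_i,\gamma_i}(t)$ with its $\max$ over $\gamma_i$-signs and the $\max$ over $q_{i+}, q_{i-}$ for the determinant is dictated; the rest of the argument is a matter of bookkeeping.
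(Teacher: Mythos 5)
Your proposal is correct and follows essentially the same route as the paper's own proof: Minkowski's integral inequality for the variable exponent norm, the generalized H\"older inequality to split the product, a modular-level change of variables producing exactly the factor $c_{A_i,q_i,\gamma_i}(t)$, and finally the embedding Theorem \ref{theoembed} under hypothesis (\ref{DKVarLebesgue}) to pass from $L^{q_i(A_i^{-1}(t)\cdot)}_{\omega_i}$ to $L^{\zeta q_i(\cdot)}_{\omega_i}$. No gaps; nothing further to add.
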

\begin{proof}
By using the versions of the Minkowski inequality for variable Lebesgue spaces from Corollary 2.38 in \cite{CUF2013}, we have
\begin{equation}\label{0HALq}
\big\|H_{\Phi,\vec{A}}(\vec{f})\big\|_{L^{q(\cdot)}_{\omega}}\lesssim\int\limits_{\mathbb R^n}{\frac{\Phi(t)}{|t|^n}\big\|\prod\limits_{i=1}^{m}f(A_i(t).)\big\|_{L^{q(\cdot)}_{\omega}}}dt.
\end{equation}
By assuming $\sum\limits_{i=1}^m{\frac{1}{q_i(\cdot)}}=\frac{1}{q(\cdot)}$ and applying the H\"{o}lder inequality for variable Lebesgue spaces (see also Corollary 2.28  in \cite{CUF2013}), we imply that 
\[
\big\|\prod\limits_{i=1}^{m}f_i(A_i(t).)\big\|_{L^{q(\cdot)}_{\omega}}\lesssim \prod_{i=1}^{m}\big\|f_i(A_i(t).)\big\|_{L^{q_i(\cdot)}_{\omega_i}}.
\]
Consequently, we obtain
\begin{equation}\label{HALqLeb}
\big\|H_{\Phi,\vec{A}}(\vec{f})\big\|_{L^{q(\cdot)}_{\omega}}\lesssim \int\limits_{\mathbb R^n}{\frac{\Phi(t)}{|t|^n}\prod_{i=1}^{m}\big\|f_i(A_i(t).)\big\|_{L^{q_i(\cdot)}_{\omega_i}}}dt.
\end{equation}
For $\eta>0$, we see that
\begin{eqnarray}
 &&\int\limits_{\mathbb R^n} {\left(\dfrac{\big|{{f_i}({A_i}(t).x)}\big|{\omega_i}(x)}{\eta}\right)^{q_i(x)}dx}
\nonumber
\\
&&= \int\limits_{\mathbb R^n} {{\left(\dfrac{\big| {{f_i}(z)}\big|{{\big| {A_i^{ - 1}(t)z} \big|}^{{\gamma_i}}}}{\eta}\right)^{q_i(A_i^{-1}(t).z)}}\big| {\det A_i^{-1}(t)}\big|dz}
 \nonumber
 \\
 &&\leq \big| {\det A_i^{ - 1}(t)} \big|.\int\limits_{\mathbb R^n} {{\left(\dfrac{\max \big\{ {{{\left\| {A_i^{ - 1}(t)} \right\|}^{{\gamma_i}}},{{\left\| {{A_i}(t)} \right\|}^{ - {\gamma_i}}}} \big\}\big| {{f_i}(z)} \big|{\omega_i(z)}}{\eta}\right)^{{q_i(A_i^{-1}(t).z)}}}dz}\nonumber
 \\
 &&\leq\int\limits_{\mathbb R^n} {{\left(\dfrac{c_{A_i,q_i,\gamma_i}(t).\big| {{f_i}(z)} \big|{\omega_i(z)}}{\eta}\right)^{{q_i(A_i^{-1}(t).z)}}}dz}.\nonumber
\end{eqnarray}
Hence, it follows from the definition of the Lebesgue space with variable exponent that
\begin{equation}\label{MVarfiBLeb}
\big\|f_i(A_i(t).)\big\|_{L^{q_i(\cdot)}_{\omega_i}}\leq c_{A_i,q_i,\gamma_i}(t).\big\|f_i\big\|_{L^{q_i(A_i^{-1}(t)\cdot)}_{\omega_i}}.
\end{equation}
In view of ($\ref{DKVarLebesgue}$) and Theorem \ref{theoembed}, we deduce
\begin{equation}\label{nhung2}
\big\|f\big\|_{L^{q_i(A_i^{-1}(t)\cdot)}_{\omega_i}}\lesssim \big\|1\big\|_{L^{r_i(t,\cdot)}}.\big\|f\big\|_{L^{\zeta q_i(\cdot)}_{\omega_i}},
\end{equation} 
Therefore, by (\ref{HALqLeb})-(\ref{nhung2}), we obtain
\[
\big\|H_{\Phi,\vec{A}}(\vec{f})\big\|_{L^{q(\cdot)}_{\omega}}\lesssim \mathcal C_1.\prod\limits_{i=1}^m\big\|f_i\big\|_{L^{\zeta q_i(\cdot)}_{\omega_i}},
\]
which finishes the proof of this theorem.
\end{proof}
In particular, when $\zeta \leq 1$, we have the following important result to the case  of matrices having property (\ref{DK1}) above.
\begin{theorem}\label{TheoremVarLebesgue1}
Let us have the given supposition of Theorem \ref{TheoremVarLebesgue} and assume that
\begin{equation}\label{DKVarLeb}
q_i(A_i^{-1}(t)\cdot)\leq q_i(\cdot), \big\|1\big\|_{L^{r_{1i}(t,\cdot)}}<\infty, 
\end{equation}
where $\dfrac{1}{r_{1i}(t,\cdot)}=\dfrac{1}{q_i(A_i^{-1}(t)\cdot)}-\dfrac{1}{q_i(\cdot)}, \, \text{a.e. }\,t\in \textit{\rm supp}(\Phi)$, for all $i=1,...,m$. 
\\
{\rm(a)} If 
\[
\mathcal C_2 =\int\limits_{\mathbb R^n}{\dfrac{\Phi(t)}{|t|^n}\prod\limits_{i=1}^{m}\max\Big\{\big\|A_i^{-1}(t)\big\|^{\frac{n}{q_{i+}}+\gamma_i}, \big\|A_i^{-1}(t)\big\|^{\frac{n}{q_{i-}}+\gamma_i}\Big\}\big\|1\big\|_{L^{r_{1i}(t,\cdot)}}dt}<\infty,
\]
then 
\[
\big\|H_{\Phi,\vec A}(\vec f)\big\|_{L^{ q(\cdot)}_{\omega}}\lesssim \mathcal C_2\prod\limits_{i=1}^{m}\big\| f_i\big\|_{L^{q_i(\cdot)}_{\omega_i}}.
\]
{\rm(b)} Let
\[
\mathcal C_2^* =\int\limits_{\mathbb R^n}{\dfrac{\Phi(t)}{|t|^n}\prod\limits_{i=1}^{m}\min\Big\{\big\|A_i^{-1}(t)\big\|^{\frac{n}{q_{i+}}+\gamma_i}, \big\|A_i^{-1}(t)\big\|^{\frac{n}{q_{i-}}+\gamma_i}\Big\}}dt.
\]
Assume that $H_{\Phi,\vec{A}}$ is a bounded operator from $L^{q_1(\cdot)}_{\omega_1}\times\cdots\times L^{q_m(\cdot)}_{\omega_m}$ to $L^{ q(\cdot)}_{\omega}$ and the following condition is satisfied:
\begin{equation}\label{dangthucVarLeb}
\dfrac{1}{q_{1-}}+ \dfrac{1}{q_{2-}}+\cdots+\dfrac{1}{q_{m-}}=\dfrac{1}{q_+}.
\end{equation}
Then, $\mathcal C_2^*$ is finite. Moreover,
\[
\big\|H_{\Phi,\vec A}\big\|_{L^{q_1(\cdot)}_{\omega_1}\times\cdots\times L^{q_m(\cdot)}_{\omega_m} \to L^{ q(\cdot)}_{\omega}}\gtrsim \mathcal C_2^*.
\]
\end{theorem}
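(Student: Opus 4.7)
The plan is to treat the two parts separately. Part (a) is a quick reduction to Theorem \ref{TheoremVarLebesgue}; the real work lies in part (b), which is a test-function (necessity) argument.

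For part (a), I observe that the hypothesis (\ref{DKVarLeb}) is precisely (\ref{DKVarLebesgue}) with $\zeta = 1$, and in that case $r_{1i}(t,\cdot) = r_i(t,\cdot)$. Thus Theorem \ref{TheoremVarLebesgue} gives $\|H_{\Phi,\vec A}(\vec f)\|_{L^{q(\cdot)}_\omega} \lesssim \mathcal C_1 \prod_{i=1}^m \|f_i\|_{L^{q_i(\cdot)}_{\omega_i}}$, and it suffices to show $\mathcal C_1 \lesssim \mathcal C_2$. From (\ref{DK1.1}) we have $\|A_i(t)\|^{-\gamma_i} \lesssim \|A_i^{-1}(t)\|^{\gamma_i}$, which collapses the first maximum in $c_{A_i,q_i,\gamma_i}(t)$; from (\ref{detA}) we have $|\det A_i^{-1}(t)|^{1/q_{i\pm}} \leq \|A_i^{-1}(t)\|^{n/q_{i\pm}}$. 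Combining,
\[
c_{A_i,q_i,\gamma_i}(t) \lesssim \max\bigl\{\|A_i^{-1}(t)\|^{n/q_{i+}+\gamma_i},\,\|A_i^{-1}(t)\|^{n/q_{i-}+\gamma_i}\bigr\},
\]
so $\mathcal C_1 \lesssim \mathcal C_2$ and (a) follows.

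For part (b), since the minimum in $\mathcal C_2^*$ is pointwise dominated by $\|A_i^{-1}(t)\|^{n/q_{i-}+\gamma_i}$, it is enough to bound the latter integral by $\|H_{\Phi,\vec A}\|_{\mathrm{op}}$. My approach is to probe $H_{\Phi,\vec A}$ against the power family $f_i(x) = |x|^{\sigma_i}$ with the critical exponent $\sigma_i := -n/q_{i-} - \gamma_i$. The hypothesis (\ref{dangthucVarLeb}) together with $\sum\gamma_i = \gamma$ forces $\sum_i \sigma_i = -n/q_+ - \gamma$, which is exactly the exponent that will appear on the target side. Applying (\ref{DK1.2}) with $\sigma = \sigma_i$,
\[
\prod_{i=1}^m |A_i(t)x|^{\sigma_i} \gtrsim |x|^{-n/q_+-\gamma}\prod_{i=1}^m \|A_i^{-1}(t)\|^{n/q_{i-}+\gamma_i},
\]
so that, pulling out the $t$-independent factor,
\[
H_{\Phi,\vec A}(\vec f)(x) \gtrsim |x|^{-n/q_+-\gamma} \int_{\mathbb R^n}\frac{\Phi(t)}{|t|^n}\prod_{i=1}^m\|A_i^{-1}(t)\|^{n/q_{i-}+\gamma_i}\,dt.
\]

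Since $|x|^{\sigma_i}$ is not globally in $L^{q_i(\cdot)}_{\omega_i}$, I work with the truncated family $f_i^R(x) = |x|^{\sigma_i}\chi_{\{1\leq|x|\leq R\}}(x)$, compute the moduli of $f_i^R\omega_i = |x|^{-n/q_{i-}}$ on the annulus and of the target-side lower bound $|x|^{-n/q_+}$ on a suitably chosen ball, apply the boundedness hypothesis, divide, and send $R\to\infty$. The identity $\sum 1/q_{i-} = 1/q_+$ is precisely what makes the ratio of truncated norms stabilise: the sums of critical modular exponents on the domain and target sides match, so the divergent rates cancel and leave the $t$-integral as the only surviving lower bound. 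Combined with $\min \leq \|A_i^{-1}(t)\|^{n/q_{i-}+\gamma_i}$, this yields $\|H_{\Phi,\vec A}\|_{\mathrm{op}} \gtrsim \mathcal C_2^*$.

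The principal technical obstacle will be this last truncated-norm comparison: power functions with exponent $-n/q_{i-}$ sit exactly at the borderline of integrability in $L^{q_i(\cdot)}$, and the rate at which their modular norms diverge as the annulus fattens depends sensitively on the local behaviour of $q_i(\cdot)$ near its infimum. Pinning down exact cancellation in the ratio, rather than a crude upper bound, is the heart of the matter. A secondary complication is that the pointwise lower bound $|A_i(t)x|\gtrsim |x|/\|A_i^{-1}(t)\|$ captures only the worst direction, so the truncation region on the target side must be coordinated with $\|A_i(t)\|,\,\|A_i^{-1}(t)\|$ through the support of $\Phi$ to guarantee that $f_i^R(A_i(t)x)$ remains non-zero over the range of $t$ that contributes to $\mathcal C_2^*$.
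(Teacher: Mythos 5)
Your part (a) coincides with the paper's own argument: both reduce to Theorem \ref{TheoremVarLebesgue} with $\zeta=1$ and use (\ref{detA}) together with (\ref{DK1.1}) to absorb $c_{A_i,q_i,\gamma_i}(t)$ into $\max\{\|A_i^{-1}(t)\|^{n/q_{i+}+\gamma_i},\|A_i^{-1}(t)\|^{n/q_{i-}+\gamma_i}\}$, which is exactly the paper's inequality (\ref{cAiq+-}).

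For part (b) you take a genuinely different, and viable, route. The paper tests against the globally supported functions $f_i(x)=|x|^{-n/q_i(x)-\gamma_i-\varepsilon}\chi_{\{|x|\ge\rho_{\vec A}^{-1}\}}$, whose norms blow up like $\varepsilon^{-1/q_{i-}}$, bounds the target norm from below on $\{|x|\ge\varepsilon^{-1}\}$ by (roughly) $\varepsilon^{-1/q_+}$ times the $t$-integral over $U=\{t:\|A_i(t)\|\ge\varepsilon\ \text{for all}\ i\}$, and lets $\varepsilon\to0^+$, with (\ref{dangthucVarLeb}) cancelling the rates and dominated convergence recovering the full integral. You instead keep the critical exponent $-n/q_{i-}-\gamma_i$ and truncate in space, so the cancellation is between logarithmic rates, $(\log R)^{\sum_i 1/q_{i-}}$ upstairs against $(\log R)^{1/q_+}$ downstairs, with (\ref{dangthucVarLeb}) playing the identical role. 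Two points confirming your plan closes. First, the borderline modular estimates you flag as the main obstacle are actually benign, because each inequality you need points in the favourable direction: since $q_i(x)\ge q_{i-}$, the domain modular $\int_{1\le|x|\le R}|x|^{-nq_i(x)/q_{i-}}dx$ is at most $c\log R$ (an upper bound is all that is required there), while since $q(x)\le q_+$ the target modular of $|x|^{-n/q_+}$ over a comparable annulus is at least $c\log R$ (a lower bound is all that is required there); the two-sided inequalities (\ref{maxminvar}) then produce exactly the exponents $1/q_{i-}$ and $1/q_+$ for large $R$, so no ``exact cancellation'' beyond (\ref{dangthucVarLeb}) is needed. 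Second, the coordination issue is resolved by restricting the $t$-integral to a set $K$ on which all $\|A_i(t)\|$ and $\|A_i^{-1}(t)\|$ lie in $[c_K^{-1},c_K]$, taking $x$ in the annulus $c_K\le|x|\le R/c_K$ so that $1\le|A_i(t)x|\le R$ for every $t\in K$, letting $R\to\infty$ to get $\|H_{\Phi,\vec A}\|_{\mathrm{op}}\gtrsim\int_K\frac{\Phi(t)}{|t|^n}\prod_i\|A_i^{-1}(t)\|^{n/q_{i-}+\gamma_i}\,dt$, and then exhausting $\mathrm{supp}\,\Phi$ by monotone convergence; this replaces the paper's set $U$ and its dominated-convergence step. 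The trade-off: the paper's $\varepsilon$-regularized test function needs only a one-sided constraint on $|A_i(t)x|$ (hence the simpler set $U$), while your truncation avoids the $\varepsilon^{m\varepsilon}$ bookkeeping and the computation of the limit of $\vartheta(\varepsilon)$, at the cost of a two-sided restriction in $t$.
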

\begin{proof}
We begin with the proof for the case ${\rm(a)}$. From (\ref{DKVarLeb}), by using the  Theorem \ref{theoembed} again, we have
\begin{equation}\label{nhungVarLeb2}
\big\|f\big\|_{L^{q_i(A_i^{-1}(t).)}_{\omega_i}}\lesssim \big\|1\big\|_{L^{r_{1i}(t,\cdot)}}.\big\|f\big\|_{L^{q_i(.)}_{\omega_i}},\,\textit{\rm for all}\, f \in L^{q_i(A_i^{-1}(t).)}_{\omega_i}.
\end{equation}
On the other hand, by (\ref{detA}) and (\ref{DK1.1}), for $i = 1, 2, ..., m$, we find
\begin{equation}\label{cAiq+-}
c_{A_i,q_i,\gamma_i}(t)\lesssim \max\Big\{\big\|A_i^{-1}(t)\big\|^{\frac{n}{q_{i+}}+\gamma_i}, \big\|A_i^{-1}(t)\big\|^{\frac{n}{q_{i-}}+\gamma_i}\Big\}.
\end{equation}
By the similar arguments as Theorem \ref{TheoremVarLebesgue}, by (\ref{nhungVarLeb2}) and (\ref{cAiq+-}), we also obtain
\[
\big\|H_{\Phi,\vec A}(\vec f)\big\|_{L^{q(\cdot)}_{\omega}}\lesssim \mathcal C_2\prod\limits_{i=1}^m\big\|f_i\big\|_{L^{q_i(\cdot)}_{\omega_i}}.
\]
Now, for the case ${\rm(b)}$, we make the functions $f_i$ for all $i=1,...m$ as follows:
\[{f_i}(x) =
\begin{cases}
0,\,\,\;\;\;\;\;\,\,\,\,\,\,\,\,\,\,\,\,\,\,\,\,\,\,\,\,{\rm if }\, | x | < \rho_{\vec A}^{ - 1},&\\
  {| x |^{ - \frac{{n}}{{{q_i(x)}}}-\gamma_i - \varepsilon }},\,{\rm otherwise.}&
\end{cases}
\]
This immediately deduces that $\big\|f_i\big\|_{L^{q_i(\cdot)}_{\omega_i}} >0$. Besides that, we also compute
\begin{eqnarray}
F_{q_i}(f_i\omega_i)&=&\int\limits_{|x|\geq\rho_{\vec A}^{-1}}{|x|^{-n-\varepsilon q_i(x)}}dx=\int\limits_{\rho_{\vec A}^{-1}}^{+\infty}\int\limits_{S^{n-1}}{r^{-\varepsilon q_i(rx')-1}}
d\sigma(x')dr\nonumber
\\
&=& \int\limits_{\rho_{\vec A}^{-1}}^{1}\int\limits_{S^{n-1}}{r^{-\varepsilon q_i(rx')-1}}d\sigma(x')dr
+ \int\limits_{1}^{+\infty}\int\limits_{S^{n-1}}{r^{-\varepsilon q_i(rx')-1}}d\sigma(x')dr.
\nonumber
\end{eqnarray}
Thus, $F_{q_i}(f_i\omega_i)$ is dominated by
\[
\int\limits_{\rho_{\vec A}^{-1}}^{1}\int\limits_{S^{n-1}}{r^{-1-\varepsilon q_{i+}}}d\sigma(x')dr
+ \int\limits_{1}^{+\infty}\int\limits_{S^{n-1}}{r^{-1-\varepsilon q_{i-}}}d\sigma(x')dr
\lesssim \Big((\rho_{\vec A}^{\varepsilon q_{i+}}-1)q_{i-}+q_{i+}\Big){\varepsilon}^{-1}.
\]
From the above estimation, by (\ref{maxminvar}), we get 
\begin{equation}\label{normfiVarLeb}
\big\|f_i\big\|_{L^{q_i(\cdot)}_{\omega_i}} \lesssim \Big((\rho_{\vec A}^{\varepsilon q_{i+}}-1)q_{i-}+q_{i+}\Big)^{\frac{1}{q_{i-}}}{\varepsilon}^{\frac{-1}{q_{i-}}}.
\end{equation}
Next, let us denote two useful sets as follows
\[
{S_x} = \bigcap\limits_{i = 1}^m {\big\{ {t \in {\mathbb R^n}: | {{A_i}(t)}x| \geq {\rho^{-1}_{\vec A}} \big\}}},
\]
and 
\[
U = \big\{ {t \in {\mathbb R^n}:\big\|A_i(t)\big\| \geq \varepsilon,\,\text{for all }\, i = 1,...,m} \big\}.\]
Then, we claim that
\begin{equation}\label{USx'Leb}
 U \subset S_x,\,\,\text{for all}\,\, x\in\mathbb R^n\setminus B(0,\varepsilon^{-1}).
\end{equation}
Indeed, let $t\in U$. This leads that $\big\|A_i(t)\big\|.|x|\geq 1,\,\text{for all}\,\, x\in\mathbb R^n\setminus B(0,\varepsilon^{-1}).$ Therefore, it follows from applying the condition (\ref{DK1}) that
 \[
|A_i(t)x|\geq \big\|A_i^{-1}(t)\big\|^{-1}|x|\geq\rho_{\vec A}^{-1},
\]
which finishes the proof of the relation (\ref{USx'Leb}). 
\\
Now, by letting $x\in \mathbb R^n\setminus B(0,\varepsilon^{-1})$ and using (\ref{USx'Leb}), we see that
\[
{H_{\Phi ,\vec A }}(\vec{f)} (x) \geq \int\limits_{S_{x}} {\frac{{\Phi (t)}}{{{{\left| t \right|}^n}}}} \prod\limits_{i = 1}^m {{{| {{A_i}(t)x}|}^{- \frac{{n}}{{{q_i(x)}}} - \gamma_i-\varepsilon }}} dt \geq \int\limits_{U} {\frac{{\Phi (t)}}{{{{\left| t \right|}^n}}}} \prod\limits_{i = 1}^m {{{| {{A_i}(t)x}|}^{- \frac{{n}}{{{q_i(x)}}} - \gamma_i-\varepsilon }}} dt.
\]
Thus, by (\ref{DK1.2}), we find
\begin{equation}\label{HAf3Leb}
{H_{\Phi ,\vec A }}(\vec{f)} (x) \gtrsim \Big(\int\limits_{U} {\frac{{\Phi (t)}}{{{{\left| t \right|}^n}}}} \prod\limits_{i = 1}^m {{{\big\| A_i^{-1}(t)\big\|}^{\frac{{n}}{{{q_i(x)}}}+\gamma_i+\varepsilon }}} dt\Big)|x|^{- \frac{{n}}{{{q(x)}}} - \gamma-m\varepsilon}\chi_{\mathbb R^n\setminus B(0,\varepsilon^{-1})}(x).
\end{equation}
For convenience, we denote
\[
\Gamma_\varepsilon=\int\limits_{U}{\dfrac{\Phi(t)}{|t|^n}\prod\limits_{i=1}^{m}\min\Big\{\big\|A_i^{-1}(t)\big\|^{\frac{n}{q_{i+}}+\gamma_i}, \big\|A_i^{-1}(t)\big\|^{\frac{n}{q_{i-}}+\gamma_i}\Big\}}\prod\limits_{i=1}^m \big\|A_i^{-1}(t)\big\|^{\varepsilon}\varepsilon^{m\varepsilon}dt.
\]
Hence, by (\ref{HAf3Leb}), we arrive at
\begin{eqnarray}\label{HAf4Leb}
\big\|{H_{\Phi ,\vec A }}(\vec{f)} \big\|_{L^{q(\cdot)}_{\omega}}
&&\gtrsim \varepsilon^{-m\varepsilon}\Gamma_{\varepsilon}.\big\| |\cdot|^{- \frac{{n}}{{{q(\cdot)}}}-\gamma - m\varepsilon}\chi_{\mathbb R^n\setminus B(0,\varepsilon^{-1})}\big\|_{L^{q(\cdot)}_{\omega}}\nonumber\\
&&=:\varepsilon^{-m\varepsilon}\Gamma_{\varepsilon}.\big\| h\big\|_{L^{q(\cdot)}_{\omega}},
\end{eqnarray}
where $h(x)=|x|^{- \frac{{n}}{{{q(x)}}}-\gamma - m\varepsilon}\chi_{\mathbb R^n\setminus B(0,\varepsilon^{-1})}$.
Next, we will prove the following result
\begin{equation}\label{chuanfLeb}
\big\| h \big\|_{L^{q(\cdot)}_{\omega}}\gtrsim \varepsilon^{m\varepsilon\frac{q_{+}}{q_{-}}}.\varepsilon^{\frac{-1}{q_+}}.
\end{equation}
Indeed, for $\varepsilon$ sufficiently small such that $\varepsilon^{-1}>1$, we compute
\begin{eqnarray}
F_{q}(h\omega)&=&\int\limits_{|x|\geq \varepsilon^{-1}}{|x|^{-n-m\varepsilon q(x)}}dx= \int\limits_{\varepsilon^{-1}}^{+\infty}\int\limits_{S^{n-1}}{r^{-1-m\varepsilon q(rx')}}
d\sigma(x')dr\nonumber
\\
&\geq& \int\limits_{\varepsilon^{-1}}^{+\infty}\int\limits_{S^{n-1}}{r^{-1-m\varepsilon q_{+}}}
d\sigma(x')dr\gtrsim \varepsilon^{m\varepsilon q_{+}}.\varepsilon^{-1}.
\end{eqnarray}
From this, by the inequality (\ref{maxminvar}), we immediately obtain the inequality (\ref{chuanfLeb}).
By writing $\vartheta(\varepsilon)$ as
\[
\vartheta \big(\varepsilon\big) = \dfrac{\varepsilon^{m\varepsilon\frac{q_{+}}{q_{-}}}.\varepsilon^{\frac{-1}{q_+}}}{\prod\limits_{i=1}^{m}\Big((\rho_{\vec A}^{\varepsilon q_{i+}}-1)q_{i-}+q_{i+}\Big)^{\frac{1}{q_{i-}}}{\varepsilon}^{\frac{-1}{q_{i-}}}},
\]
then, by (\ref{HAf4Leb}) and (\ref{chuanfLeb}), we estimate
\begin{equation}\label{HAf5Leb}
\big\| {{H_{\Phi ,\vec A }}\big( {\vec f } \big)} \big\|_{L^{q(\cdot)}_{\omega}}\gtrsim \varepsilon^{-m\varepsilon}\vartheta.\Gamma_\varepsilon.\prod\limits_{i=1}^m\big\|f_i\big\|_{L^{q_i(\cdot)}_{\omega_i}}.
\end{equation}
Note that, by  lettting $\varepsilon$  sufficiently small and $t\in U$, we find
\begin{equation}\label{prodLeb}
 \prod\limits_{i=1}^m \big\|A_i^{-1}(t)\big\|^\varepsilon.\varepsilon^{m\varepsilon}\leq \rho_{\vec A}^{\varepsilon}\lesssim 1.
\end{equation}
By the relation (\ref{dangthucVarLeb}), we get the limit of function $\varepsilon^{-m\varepsilon}\vartheta$ is a positive number when $\varepsilon$ tends to zero. Thus, by (\ref{HAf5Leb}), (\ref{prodLeb}) and the dominated convergence theorem of Lebesgue, we also have 
\[
\int\limits_{\mathbb R^n}{\dfrac{\Phi(t)}{|t|^n}\prod\limits_{i=1}^{m}\min\Big\{\big\|A_i^{-1}(t)\big\|^{\frac{n}{q_{i+}}+\gamma_i}, \big\|A_i^{-1}(t)\big\|^{\frac{n}{q_{i-}}+\gamma_i}\Big\}}dt<\infty.
\]
which completes the proof of the theorem.
\end{proof}
Next, we discuss the boundedness of the multilinear Hausdorff operators on the product of weighted Morrey-Herz spaces with variable exponent.
\begin{theorem}\label{TheoremVar-MorreyHerz}
Let $\omega_1(x)=|x|^{\gamma_1}$,..., $\omega_m(x)=|x|^{\gamma_m}$, $\omega(x)= |x|^{\gamma}$, $q(\cdot)\in\mathcal P_b(\mathbb R^n)$, $\lambda_1,...,\lambda_m,\zeta >0$, and the hypothesis (\ref{DKVarLebesgue}) in Theorem \ref{TheoremVarLebesgue} hold. Suppose that  for all $i =1,...,m $, we have
\begin{equation}\label{Var-MorreyHerzDK3}
\alpha_i(0)-\alpha_{i\infty} \geq 0.
\end{equation}
At the same time, let
\begin{eqnarray}\label{Var-MorreyHerzDK4}
\mathcal C_3&=& \int\limits_{\mathbb R^n}{\frac{\Phi(t)}{|t|^n}\prod\limits_{i=1}^{m}c_{A_i,q_i,\gamma_i}(t)\big\|1\big\|_{L^{r_{i}(t,\cdot)}}.{\max\Big\{\big\|A_i(t)\big\|^{\lambda_i-\alpha_i(0)}, \big\|A_i(t)\big\|^{\lambda_i-\alpha_{i\infty}} \Big\}}}\times
\nonumber
\\
&&\,\,\,\,\,\,\,\times{\max\Big\{\sum\limits_{r=\Theta_n^*-1}^{0} 2^{r(\lambda_i-\alpha_i(0))},\sum\limits_{r=\Theta_n^*-1}^{0}2^{r(\lambda_i-\alpha_{i\infty})}\Big\}}dt<\infty,
\nonumber
\\
\end{eqnarray}
where $\Theta_n^*=\Theta_n^*(t)$ is the greatest integer number satisfying 
\[
\mathop{\rm max}\limits_{i=1,...,m}\big\{\|A_i(t)\|.\|A_i^{-1}(t)\|\big\}<2^{-\Theta_n^*},\,\, \text{\rm for a.e. }\, t\in\mathbb R^n.
\]
Then, $H_{\Phi,\vec{A}}$ is a bounded operator from ${M{\mathop K\limits^.}}^{\alpha_1(\cdot),\lambda_1}_{p_1,\zeta q_1(\cdot),\omega_1}\,\times\cdots\times {M{\mathop K\limits^.}}^{\alpha_m(\cdot),\lambda_m}_{p_m,\zeta q_m(\cdot),\omega_m}$ to ${M{\mathop K\limits^.}}^{\alpha(\cdot),\lambda}_{p, q(\cdot),\omega}$.
\end{theorem}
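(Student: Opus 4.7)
The plan is to control each dyadic annular piece $\|2^{k\alpha(\cdot)} H_{\Phi,\vec A}(\vec f)\chi_k\|_{L^{q(\cdot)}_\omega}$ by $2^{k\lambda}$ times the integrand of $\mathcal C_3$ and the product of the source Morrey-Herz norms of the $f_i$, then raise to the $p$-th power, sum over $k\leq k_0$, and multiply by $2^{-k_0\lambda}$. Because $\lambda=\sum_i\lambda_i>0$, the geometric tail $\sum_{k\leq k_0}2^{k\lambda p}$ is comparable to $2^{k_0\lambda p}$, and taking the supremum over $k_0$ will deliver the Morrey-Herz norm on the target side. The opening steps mimic the proof of Theorem~\ref{TheoremVarLebesgue}: Minkowski's integral inequality for $L^{q(\cdot)}_\omega$, the splitting $2^{k\alpha(\cdot)}=\prod_i 2^{k\alpha_i(\cdot)}$ (legitimate since $\alpha=\sum_i\alpha_i$), and the variable H\"older inequality yield
\[
\bigl\|2^{k\alpha(\cdot)} H_{\Phi,\vec A}(\vec f)\chi_k\bigr\|_{L^{q(\cdot)}_\omega}
\lesssim \int_{\mathbb R^n}\frac{\Phi(t)}{|t|^n}\prod_{i=1}^m\bigl\|2^{k\alpha_i(\cdot)} f_i(A_i(t)\cdot)\chi_k\bigr\|_{L^{q_i(\cdot)}_{\omega_i}}\,dt,
\]
and the change of variable $z=A_i(t)x$ combined with Theorem~\ref{theoembed} under hypothesis (\ref{DKVarLebesgue}) gives, for each $i$,
\[
\bigl\|f_i(A_i(t)\cdot)\chi_k\bigr\|_{L^{q_i(\cdot)}_{\omega_i}}
\lesssim c_{A_i,q_i,\gamma_i}(t)\,\|1\|_{L^{r_i(t,\cdot)}}\,\bigl\|f_i\chi_{A_i(t)(C_k)}\bigr\|_{L^{\zeta q_i(\cdot)}_{\omega_i}}.
\]

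The core of the argument is a dyadic decomposition of $A_i(t)(C_k)$. For $x\in C_k$, the inequalities $\|A_i^{-1}(t)\|^{-1}2^{k-1}\leq |A_i(t)x|\leq \|A_i(t)\|\,2^k$ place $A_i(t)(C_k)$ inside $\bigcup_{r=\Theta_n^*-1}^{0}C_{k+\kappa_i(t)+r}$, where $\kappa_i(t):=\lceil\log_2\|A_i(t)\|\rceil$. The defining inequality of $\Theta_n^*$ is calibrated precisely so that this shifted range of $r$ covers every annulus intersecting $A_i(t)(C_k)$. Lemma~\ref{lemmaVE} then estimates each $\|f_i\chi_{k+\kappa_i(t)+r}\|_{L^{\zeta q_i(\cdot)}_{\omega_i}}$ by $2^{j(\lambda_i-\alpha_i(0))}$ times the source Morrey-Herz norm of $f_i$ when $j:=k+\kappa_i(t)+r\leq 0$, and by $2^{j(\lambda_i-\alpha_{i\infty})}$ times the same norm when $j>0$.

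I then re-attach the factor $2^{k\alpha_i(\cdot)}\chi_k$: the log-H\"older continuity of $\alpha_i$ at $0$ and at infinity gives $2^{k\alpha_i(x)}\simeq 2^{k\alpha_i(0)}$ on $C_k$ for $k\leq 0$ and $2^{k\alpha_i(x)}\simeq 2^{k\alpha_{i\infty}}$ for $k>0$. A short case analysis on the four sign combinations of $k$ and $j$ shows that the diagonal cases $(k,j\leq 0)$ and $(k,j>0)$ yield exactly $2^{k\lambda_i}\cdot 2^{(\kappa_i(t)+r)(\lambda_i-\alpha_i(0))}$ and $2^{k\lambda_i}\cdot 2^{(\kappa_i(t)+r)(\lambda_i-\alpha_{i\infty})}$ respectively, whereas the off-diagonal cases introduce an additional cross-factor of the form $2^{k(\alpha_i(0)-\alpha_{i\infty})}$ or its reciprocal. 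Hypothesis (\ref{Var-MorreyHerzDK3}) is precisely what forces each such cross-factor to be $\leq 1$. Bounding the two possible exponents by their maximum, and then using both the elementary inequality $\max(AB,CD)\leq\max(A,C)\max(B,D)$ and $2^{\kappa_i(t)\beta}\simeq\|A_i(t)\|^\beta$, separates the $\kappa_i(t)$ and $r$ contributions into exactly the two maxima appearing in the integrand of $\mathcal C_3$. Taking the product over $i$ and integrating against $\Phi(t)/|t|^n$ reconstructs $2^{k\lambda}\mathcal C_3\prod_i\|f_i\|$ as announced, after which the final sum in $k\leq k_0$ and supremum in $k_0$ complete the proof.

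The main obstacle, as expected, is the bookkeeping in the last two paragraphs: one must identify the correct shift $\kappa_i(t)$ so that the dyadic index range aligns with the hypothesis range $[\Theta_n^*-1,0]$, and then carry out the four-case sign analysis showing that (\ref{Var-MorreyHerzDK3}) is exactly the condition neutralizing the off-diagonal cross-terms that arise when $k$ and $j=k+\kappa_i(t)+r$ lie on opposite sides of $0$.
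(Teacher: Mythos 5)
Your proposal is correct and follows essentially the same route as the paper: Minkowski plus the variable H\"older inequality, the change of variables with the dyadic shift $\ell_i(t)=\lceil\log_2\|A_i(t)\|\rceil$ and the index range $[\Theta_n^*-1,0]$, the embedding Theorem~\ref{theoembed}, and Lemma~\ref{lemmaVE}. The only (harmless) difference is organizational: you perform the sign analysis using (\ref{Var-MorreyHerzDK3}) and $\lambda_i>0$ annulus by annulus to get a clean $2^{k\lambda}$ bound before summing, whereas the paper keeps both terms $2^{k(\lambda_i-\alpha_i(0))}+2^{k(\lambda_i-\alpha_{i\infty})}$ and invokes these hypotheses later when estimating the pieces $E_1,E_2,E_3$.
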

\begin{proof}
By estimating as (\ref{HALqLeb}) above, we have
\begin{equation}\label{HALq}
\big\|H_{\Phi,\vec{A}}(\vec{f})\chi_k\big\|_{L^{q(\cdot)}_{\omega}}\lesssim \int\limits_{\mathbb R^n}{\frac{\Phi(t)}{|t|^n}\prod_{i=1}^{m}\big\|f_i(A_i(t).)\chi_k\big\|_{L^{q_i(\cdot)}_{\omega_i}}}dt.
\end{equation}
Let us now fix $i\in \big\{1,2,...,m\big\}$. Since  $\|A_i(t)\|\neq 0$, there exists an integer number $\ell_i=\ell_i(t)$ such that $2^{\ell_i-1}<\|A_i(t)\|\leq 2^{\ell_i}$. For simplicity of notation, we  write $\rho_{\vec A}^* (t) =\mathop{\rm max}\limits_{i=1,...,m}\big\{\big\|A_i(t)\big\|.\big\|A_i^{-1}(t)\big\|\big\}$. Then, by letting $y=A_i(t).z$ with $z\in C_k$, it follows that 
\[
| y | \geq {\left\| {A_i^{ - 1}(t)} \right\|^{ - 1}}\left| z \right|\geq \frac{{{2^{{\ell_i} + k - 2}}}}{{{\rho _{\vec{A}}^*}}} > {2^{k + {\ell_i} - 2 + {\Theta_n^*}}},
\]
and
\[
| y |\leq \left\| {{A_i}(t)} \right\|.\left| z \right| \leq {2^{{\ell_i} + k}}.
\]
These estimations can be used to get
\begin{equation}\label{AiCk}
{A_i}(t).{C_k} \subset \left\{ {z \in {\mathbb R^n}:{2^{k + {\ell_i} - 2 + {\Theta _n^*}}} < \left| z \right| \leq {2^{k + {\ell_i}}}} \right\}.
\end{equation}
Now, we need to prove that
\begin{equation}\label{fAchikLq}
\big\|f_i(A_i(t).)\chi_k\big\|_{L^{q_i(\cdot)}_{\omega_i}}\lesssim c_{A_i,q_i,\gamma_i}(t)\big\|1 \big\|_{L^{r_i(t,\cdot)}}.\sum\limits_{r=\Theta_n^*-1}^{0} \big\|f_i\chi_{k+\ell_i+r}\big\|_{L^{\zeta q_i(\cdot)}_{\omega_i}}.
\end{equation}
Indeed, for $\eta>0$, by (\ref{AiCk}), we find
\begin{eqnarray}
&&\int\limits_{\mathbb R^n}{\left(\dfrac{\big|f_i(A_i(t)x)\big|\chi_k(x)\omega_i(x)}{\eta}\right)^{q_i(x)}dx}\nonumber
\\
&&=\int\limits_{A_i(t)C_k}{\left(\dfrac{\big|f_i(z)\big|\big|A_i^{-1}(t)z\big|^{\gamma_i}}{\eta}\right)^{q_i(A_i^{-1}(t)z)}\big|
\textit{\rm det} A_i^{-1}(t)\big|dz}\nonumber
\\
&&\leq \big|\textit{\rm det} A_i^{-1}(t)\big|\int\limits_{A_i(t)C_k}\left(\frac{{\rm max}\big\{\big\|A_i^{-1}(t)\big\|^{\gamma_i},\big\|A_i(t)\big\|^{-\gamma_i}\big\}\big| f_i(z)\big|\omega_i(z)}{\eta}\right)^{q_i(A_i^{-1}(t).z)}dz.
\nonumber
\end{eqnarray}
So, we have that
\begin{eqnarray}\label{fAeta}
&&\int\limits_{\mathbb R^n}{\left(\dfrac{\big|f_i(A_i(t)x)\chi_k(x)\big|\omega_i(x)}{\eta}\right)^{q_i(x)}dx}
\nonumber
\\
&&\leq\int\limits_{\mathbb R^n}\left(\frac{c_{A_i,q_i,\gamma_i}(t)\big|\sum\limits_{r=\Theta_n^*-1}^0 f_i(z)\chi_{k+\ell_i+r}(z)\big|\omega_i(z)}{\eta}\right)^{q_i(A_i^{-1}(t).z)}dz.
\nonumber
\end{eqnarray}
Therefore, by the definition of Lebesgue space with variable exponent, it is easy  to get that
\[
\big\|f_i(A_i(t).)\chi_k\big\|_{L^{q_i(\cdot)}_{\omega_i}}\leq c_{A_i,q_i,\gamma_i}(t).\sum\limits_{r=\Theta_n^*-1}^{0} \big\|f_i\chi_{k+\ell_i+r}\big\|_{L^{q_i(A_i^{-1}(t)\cdot)}_{\omega_i}},
\]
which completes the proof of the inequalities (\ref{fAchikLq}), by (\ref{nhung2}). Now, it immediately follows from (\ref{HALq}) and (\ref{fAchikLq})  that
\begin{eqnarray}\label{LemmaVeH}
\big\|H_{\Phi,\vec A}(\vec f)\chi_k\big\|_{L^{q(\cdot)}_{\omega}}&\lesssim & \int\limits_{\mathbb R^n}{\frac{\Phi(t)}{|t|^n}\prod\limits_{i=1}^{m}c_{A_i,q_i,\gamma_i}(t)\big\|1\big\|_{L^{r_i(t,\cdot)}}}\times
\nonumber
\\
&&\,\,\,\,\,\,\,\,\times\prod\limits_{i=1}^{m}\Big(\sum\limits_{r=\Theta_n^*-1}^{0} \big\|f\chi_{k+\ell_i+r}\big\|_{L^{\zeta q_i(\cdot)}_{\omega_i}}\Big)dt.
\end{eqnarray}
Consequently, by  applying Lemma \ref{lemmaVE} in Section 2, we get
\begin{eqnarray}\label{HlemmaVe}
\big\|H_{\Phi,\vec A}(\vec f)\chi_k\big\|_{L^{q(\cdot)}_{\omega}}&\lesssim& \int\limits_{\mathbb R^n}{\frac{\Phi(t)}{|t|^n}\mathcal L(t).\prod\limits_{i=1}^{m}c_{A_i,q_i,\gamma_i}(t)\big\|1\big\|_{L^{r_i(t,\cdot)}}}\times\nonumber
\\
&&\,\,\,\,\,\,\,\,\,\,\,\,\,\,\,\,\,\,\,\,\,\,\,\,\,\,\,\,\,\,\times\prod_{i=1}^{m}\big\|f_i\big\|_{M{\mathop K\limits^.}^{\alpha_i(\cdot), \lambda_i}_{p_i,\zeta q_i(\cdot),\omega_i}}dt,
\end{eqnarray}
where
\[
\mathcal L(t) =\prod\limits_{i=1}^{m}\Big(2^{(k+\ell_i)(\lambda_i-\alpha_i(0))}\sum\limits_{r=\Theta_n^*-1}^{0} 2^{r(\lambda_i-\alpha_i(0))}+ 2^{(k+\ell_i)(\lambda_i-\alpha_{i\infty})}\sum\limits_{r=\Theta_n^*-1}^{0}2^{r(\lambda_i-\alpha_{i\infty})}\Big).
\]
By having $2^{\ell_i-1}<\big\|A_i(t)\big\|\leq 2^{\ell_i}$, for all $i=1,...,m$, it implies that
\[
2^{\ell_i(\lambda_i-\alpha_i(0))}+2^{\ell_i(\lambda_i-\alpha_{i\infty})}\lesssim \max\big\{\big\|A_i(t)\big\|^{\lambda_i-\alpha_i(0)}, \big\|A_i(t)\big\|^{\lambda_i-\alpha_{i\infty}} \big\}.
\]
Thus, we can estimate $\mathcal L$ as follows
\begin{eqnarray}
\mathcal L(t)&\lesssim &\prod\limits_{i=1}^{m} {\max\Big\{\big\|A_i(t)\big\|^{\lambda_i-\alpha_i(0)}, \big\|A_i(t)\big\|^{\lambda_i-\alpha_{i\infty}} \Big\}}\times\nonumber
\\
&&\times\Big\{2^{k(\lambda_i-\alpha_i(0))}\sum\limits_{r=\Theta_n^*-1}^{0} 2^{r(\lambda_i-\alpha_i(0))}+ 2^{k(\lambda_i-\alpha_{i\infty})}\sum\limits_{r=\Theta_n^*-1}^{0}2^{r(\lambda_i-\alpha_{i\infty})}\Big\}\nonumber
\\
&\lesssim & \prod\limits_{i=1}^{m} {\max\Big\{\big\|A_i(t)\big\|^{\lambda_i-\alpha_i(0)}, \big\|A_i(t)\big\|^{\lambda_i-\alpha_{i\infty}} \Big\}}\times\nonumber
\\
&&\times{\max\Big\{\sum\limits_{r=\Theta_n^*-1}^{0} 2^{r(\lambda_i-\alpha_i(0))},\sum\limits_{r=\Theta_n^*-1}^{0} 2^{r(\lambda_i-\alpha_{i\infty})}\Big\}}\Big\{2^{k(\lambda_i-\alpha_i(0))}+ 2^{k(\lambda_i-\alpha_{i\infty})}\Big\}.
\nonumber
\end{eqnarray}
From this, by (\ref{HlemmaVe}), it is not difficult to show that
\begin{equation}\label{HAchikLq}
\big\|H_{\Phi,\vec A}(\vec f)\chi_k\big\|_{L^{q(\cdot)}_{\omega}}\lesssim \mathcal C_3\prod_{i=1}^{m}\big(2^{k(\lambda_i -\alpha_i(0))}+2^{k(\lambda_i -\alpha_{i\infty})}\big).\prod_{i=1}^{m}\big\|f_i\big\|_{M{\mathop K\limits^.}^{\alpha_i(\cdot), \lambda_i}_{p_i,\zeta q_i(\cdot),\omega_i}}.
\end{equation}
On the other hand, using Proposition 2.5 in \cite{LZ2014}, we get
\begin{equation}\label{HAfMK}
\big\| H_{\Phi,\vec A}(\vec f)\big\|_{M{\mathop K\limits^.}^{\alpha(\cdot), \lambda}_{p,q(\cdot),\omega}} \lesssim \max\big\{
\sup\limits_{k_0<0,k_0\in\mathbb Z}E_1, \sup\limits_{k_0\geq 0,k_0\in\mathbb Z}(E_2+E_3)
\big\},
\end{equation}
where 
\begin{eqnarray}
&&E_1=2^{-k_0\lambda}\Big(\sum\limits_{k=-\infty}^{k_0}2^{k\alpha(0)p}\big\|H_{\Phi,\vec A}(\vec f)\chi_k\big\|^p_{L^{q(\cdot)}_{\omega}}\Big)^{\frac{1}{p}},\nonumber
\\
&&E_2 = 2^{-k_0\lambda}\Big(\sum\limits_{k=-\infty}^{-1}2^{k\alpha(0)p}\big\|H_{\Phi,\vec A}(\vec f)\chi_k\big\|^p_{L^{q(\cdot)}_{\omega}}\Big)^{\frac{1}{p}},\nonumber
\\
&&E_3=2^{-k_0\lambda}\Big(\sum\limits_{k=0}^{k_0}2^{k\alpha_\infty p}\big\|H_{\Phi,\vec A}(\vec f)\chi_k\big\|^p_{L^{q(\cdot)}_{\omega}}\Big)^{\frac{1}{p}}\nonumber.
\end{eqnarray}
In order to complete the proof, it remains to estimate the upper bounds for $E_1, E_2$ and $E_3$. Note that, 
using (\ref{HAchikLq}), $E_1$ is dominated by
\[
\mathcal C_3.2^{-k_0\lambda}\Big(\sum\limits_{k=-\infty}^{k_0}2^{k\alpha(0)p}\Big(\prod_{i=1}^{m}\big(2^{k(\lambda_i -\alpha_i(0))}+2^{k(\lambda_i -\alpha_{i\infty})}\big).\prod_{i=1}^{m}\big\|f_i\big\|_{M{\mathop K\limits^.}^{\alpha_i(\cdot), \lambda_i}_{p_i,\zeta q_i(\cdot),\omega_i}}\Big)^p\Big)^{\frac{1}{p}}.
\]
This implies that
\begin{equation}\label{E1MVar}
E_1\lesssim \mathcal C_3.\mathcal T_0.\prod_{i=1}^{m}\big\|f_i\big\|_{M{\mathop K\limits^.}^{\alpha_i(\cdot), \lambda_i}_{p_i,\zeta q_i(\cdot),\omega_i}},
\end{equation}
where $\mathcal T_0= 2^{-k_0\lambda}.\Big(\sum\limits_{k=-\infty}^{k_0}2^{k\alpha(0)p}\prod\limits_{i=1}^{m}\big(2^{k(\lambda_i -\alpha_i(0))p}+2^{k(\lambda_i -\alpha_{i\infty})p}\big)\Big)^{\frac{1}{p}}$. By some simple computations, we obtain
\begin{eqnarray}
\mathcal T_0 &=& 2^{-k_0\lambda}\Big(\sum\limits_{k=-\infty}^{k_0}\prod_{i=1}^{m}\big(2^{k\lambda_i p}+2^{k(\lambda_i -\alpha_{i\infty}+\alpha_i(0))p}\big)\Big)^{\frac{1}{p}}\nonumber
\\
&\lesssim& \Big(\prod_{i=1}^{m} 2^{-k_0\lambda_i p}\Big\{  \sum\limits_{k=-\infty}^{k_0} 2^{k\lambda_i p}+ \sum\limits_{k=-\infty}^{k_0}2^{k(\lambda_i -\alpha_{i\infty}+\alpha_i(0))p}\Big\}\Big)^{\frac{1}{p}}.\nonumber
\end{eqnarray}
Hence, by assuming that $\lambda_i >0$, for all $i=1,...,m$ and (\ref{Var-MorreyHerzDK3}), we see at once that
\begin{eqnarray}
\mathcal T_0 &\lesssim& \Big(\prod_{i=1}^{m}2^{-k_0\lambda_i p}\Big\{\dfrac{2^{k_0\lambda_i p}}{1-2^{-\lambda_ip}}+ \dfrac{2^{k_0(\lambda_i -\alpha_{i\infty}+\alpha_i(0))p}}{1- 2^{-(\lambda_i -\alpha_{i\infty}+\alpha_i(0))p}}\Big\}\Big)^{\frac{1}{p}}\nonumber
\\
&\lesssim &\prod_{i=1}^{m}\Big\{\dfrac{1}{1-2^{-\lambda_ip}}+ \dfrac{2^{k_0(-\alpha_{i\infty}+\alpha_i(0))}}{1- 2^{-(\lambda_i -\alpha_{i\infty}+\alpha_i(0))p}}\Big\}\lesssim \prod\limits_{i=1}^{m}\Big(1+ 2^{{k_0}\big(\alpha_i(0)-\alpha_{i\infty}\big)}\Big).\nonumber
\end{eqnarray}
Then, from (\ref{E1MVar}), we have
\begin{equation}\label{E1}
E_1\lesssim \mathcal C_3\prod\limits_{i=1}^{m}\Big(1+ 2^{{k_0}\big(\alpha_i(0)-\alpha_{i\infty}\big)}\Big).\prod\limits_{i=1}^{m}\big\| f_i\big\|_{M{\mathop K\limits^.}^{\alpha_i(\cdot), \lambda_i}_{p_i,\zeta q_i(\cdot),\omega_i}}.
\end{equation}
By estimating in the same way as $E_1$, we also get
\begin{equation}\label{E2}
E_2\lesssim \mathcal C_3.2^{-k_0\lambda}\prod\limits_{i=1}^{m}\big\| f_i\big\|_{M{\mathop K\limits^.}^{\alpha_i(\cdot), \lambda_i}_{p_i,\zeta q_i(\cdot),\omega_i}}.
\end{equation}
For $i=1,...,m$, we denote
\[
{K_i} = \left\{ \begin{gathered}
{2^{{k_0}({\alpha _{i\infty} } - \alpha_i(0))}} + {\left| {{2^{\lambda_i p}} - 1} \right|^{ - \frac{1}{p}}} + {2^{ - {k_0}\lambda_i }},\,\textit{\rm if}\,\lambda_i  + {\alpha_{i\infty}} - \alpha_i(0) \ne 0, \hfill \\
{2^{ - {k_0}\lambda_i }}{({k_0} + 1)^{\frac{1}{p}}} + {\left| {{2^{\lambda_i p}} - 1} \right|^{ - \frac{1}{p}}},\, \rm otherwise .\hfill \\ 
\end{gathered}  \right.
\]
Then, we may show that
\begin{equation}\label{E3MVar}
E_3\lesssim \mathcal C_3\Big(\prod\limits_{i=1}^m K_i\Big).\prod\limits_{i=1}^{m}\big\| f_i\big\|_{M{\mathop K\limits^.}^{\alpha_i(\cdot), \lambda_i}_{p_i,\zeta q_i(\cdot),\omega_i}}.
\end{equation}
The proof of inequality (\ref{E3MVar}) is not difficult, but for convenience to the reader, we briefly give here. By employing (\ref{HAchikLq}) again, we make
\begin{equation}\label{E3MVar1}
E_3 \lesssim \mathcal C_3.\mathcal T_\infty .\prod_{i=1}^{m}\big\|f_i\big\|_{M{\mathop K\limits^.}^{\alpha_i(\cdot), \lambda_i}_{p_i,\zeta q_i(\cdot),\omega_i}}, 
\end{equation}
where $\mathcal T_\infty= 2^{-k_0\lambda}\Big(\sum\limits_{k=0}^{k_0}2^{k\alpha_\infty p}\prod\limits_{i=1}^{m}\big(2^{k(\lambda_i -\alpha_i(0))p}+2^{k(\lambda_i -\alpha_{i\infty})p}\big)\Big)^{\frac{1}{p}}$. By a similar argument as $\mathcal T_0$, we also get
\begin{eqnarray}
\mathcal T_\infty &\lesssim& \prod_{i=1}^{m} 2^{-k_0\lambda_i}\Big( \sum\limits_{k=0}^{k_0} 2^{k\lambda_i p}+ \sum\limits_{k=0}^{k_0}2^{k(\lambda_i +\alpha_{i\infty}-\alpha_i(0))p}\Big)^{\frac{1}{p}}\equiv \prod\limits_{i=1}^{m} \mathcal T_{i,\infty}.
\nonumber
\end{eqnarray}
In the case $\lambda_i+\alpha_{i\infty}-\alpha_i(0)\neq 0$, we deduce that $\mathcal T_{i,\infty}$ is dominated by
\begin{eqnarray}
\mathcal T_{i,\infty}&\leq& 2^{-k_0\lambda_i}\Big( \dfrac{2^{k_0\lambda_i p}-1}{2^{\lambda_i.p}-1}+\dfrac{2^{k_0(\lambda_i +\alpha_{i\infty}-\alpha_i(0))p}-1}{2^{(\lambda_i +\alpha_{i\infty}-\alpha_i(0))p}-1}\Big)^{\frac{1}{p}} \nonumber
\\
&\lesssim & {2^{{k_0}({\alpha _{i\infty} } - \alpha_i(0))}} + {\left| {{2^{\lambda_i p}} - 1} \right|^{ - 1/p}} + {2^{ - {k_0}\lambda_i }}\nonumber.
\end{eqnarray}
Otherwise, we have
\[
\mathcal T_{i,\infty} \leq \Big( \dfrac{2^{k_0\lambda_i p}-1}{2^{\lambda_i.p}-1}+(k_0+1)\Big)^{\frac{1}{p}}\lesssim  {2^{ - {k_0}\lambda_i }}{({k_0} + 1)^{\frac{1}{p}}} + {\left| {{2^{\lambda_i p}} - 1} \right|^{ - \frac{1}{p}}}.
\]
This leads that $\mathcal T_\infty\lesssim \prod\limits_{i=1}^m K_i.$ Hence, by (\ref{E3MVar1}), we  obtain  the inequality (\ref{E3MVar}).
From (\ref{HAfMK}) and (\ref{E1})-(\ref{E3MVar}), we conclude that the proof of Theorem \ref{TheoremVar-MorreyHerz} is finished.
\end{proof}

Next, we will discuss the interesting case when $\lambda_1 = \cdots = \lambda_m = 0$. Remark that these special cases of variable exponent Morrey-Herz spaces are variable exponent Herz spaces. Hence, we also have the boundedness for the multilinear Hausdorff operators on the product of weighted Herz spaces with variable exponent as follows.
\begin{theorem}\label{TheoremVarHerz}
Suppose that we have the given supposition of Theorem $\ref{TheoremVar-MorreyHerz}$ and $\alpha_i(0)=\alpha_{i\infty}$, for all $i=1,...,m$. Let $1\leq p, p_i <\infty$ such that
\begin{equation}\label{lienhieppi}
\frac{1}{p_1}+\cdots+\frac{1}{p_m}=\frac{1}{p}.
\end{equation}
 At the same time, let
\begin{eqnarray}\label{DKMorreyHerz}
\mathcal C_4 &=&\int\limits_{{\mathbb R^n}} {{(2 - {\Theta_n^*})^{m - \frac{1}{p}}}\frac{\Phi(t)}{|t|^n}\prod\limits_{i=1}^{m} c_{A_i,q_i,\gamma_i}(t)\big\|1\big\|_{L^{r_i(t,\cdot)}}}\times
\nonumber
\\
&&\,\,\,\,\,\,\,\,\,\,\,\times{{{\left\| {{A_i}(t)} \right\|}^{{ - {\alpha_i(0)}}}}}\big(\sum\limits_{r=\Theta_n^*-1}^{0}{2^{-r\alpha_i(0)}}\big)dt<\infty. 
\end{eqnarray}
Then, $H_{\Phi,\vec{A}}$ is a bounded operator from ${\mathop K\limits^.}_{\zeta q_1(\cdot),{\omega_1}}^{{\alpha _1(\cdot)}, p_1} \times \cdots\times {\mathop K\limits^.}_{\zeta q_m(\cdot),{\omega_m}}^{{\alpha _m(\cdot)}, p_m} $ to ${\mathop K\limits^.}_{q(\cdot),{\omega}}^{{\alpha(\cdot)}, p}.$
\end{theorem}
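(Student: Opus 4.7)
The plan is to specialize the estimate (\ref{LemmaVeH}) from the proof of Theorem \ref{TheoremVar-MorreyHerz}, which does not depend on $\lambda_i$, and then to estimate the variable-exponent Herz norm of $H_{\Phi,\vec{A}}(\vec{f})$ by combining Minkowski's integral inequality with multi-Hölder's inequality in the discrete variable $k$. More precisely, repeating the change of variables, the bound on $c_{A_i,q_i,\gamma_i}(t)$ and the embedding of Theorem \ref{theoembed} exactly as in Theorem \ref{TheoremVar-MorreyHerz}, I obtain
\[
\bigl\|H_{\Phi,\vec A}(\vec f)\chi_k\bigr\|_{L^{q(\cdot)}_{\omega}}\lesssim \int\limits_{\mathbb R^n}\frac{\Phi(t)}{|t|^n}G(t)\prod_{i=1}^m\Bigl(\sum_{r=\Theta_n^*-1}^0 \bigl\|f_i\chi_{k+\ell_i+r}\bigr\|_{L^{\zeta q_i(\cdot)}_{\omega_i}}\Bigr)dt,
\]
where $G(t)=\prod_{i=1}^m c_{A_i,q_i,\gamma_i}(t)\|1\|_{L^{r_i(t,\cdot)}}$ and $2^{\ell_i-1}<\|A_i(t)\|\le 2^{\ell_i}$ as before.

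Since $\alpha_i(0)=\alpha_{i\infty}$ for every $i$ and $\alpha_i(\cdot)$ is log-Hölder continuous both at the origin and at infinity, Proposition 2.5 of \cite{LZ2014} yields
\[
\|f\|_{{\mathop K\limits^.}^{\alpha_i(\cdot),p_i}_{\zeta q_i(\cdot),\omega_i}}\simeq \Bigl(\sum_{k\in\mathbb Z}2^{k\alpha_i(0)p_i}\|f\chi_k\|^{p_i}_{L^{\zeta q_i(\cdot)}_{\omega_i}}\Bigr)^{1/p_i},
\]
and similarly for the target space with $\alpha(0)=\alpha_1(0)+\cdots+\alpha_m(0)$. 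Thus the task reduces to estimating
\[
\mathcal S := \Bigl(\sum_{k\in\mathbb Z}2^{k\alpha(0)p}\|H_{\Phi,\vec A}(\vec f)\chi_k\|^p_{L^{q(\cdot)}_\omega}\Bigr)^{1/p}.
\]
Since $p\ge 1$, Minkowski's integral inequality lets me pull the $t$-integral outside the $\ell^p(k)$-norm, producing
\[
\mathcal S\lesssim \int\limits_{\mathbb R^n}\frac{\Phi(t)}{|t|^n}G(t)\Bigl(\sum_{k\in\mathbb Z}\prod_{i=1}^m\bigl(2^{k\alpha_i(0)}\textstyle\sum_r\|f_i\chi_{k+\ell_i+r}\|_{L^{\zeta q_i(\cdot)}_{\omega_i}}\bigr)^{p}\Bigr)^{1/p}dt.
\]

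The next step, which I anticipate to be the most delicate, is to handle the inner $\ell^p$-norm in $k$ by multi-Hölder with the exponent relation (\ref{lienhieppi}): taking $P_i=p_i/p$ so that $\sum_i 1/P_i=1$, Hölder gives
\[
\sum_{k}\prod_{i=1}^m c_{i,k}^{p}\le \prod_{i=1}^m\Bigl(\sum_{k}c_{i,k}^{p_i}\Bigr)^{p/p_i},\qquad c_{i,k}:=2^{k\alpha_i(0)}\sum_{r=\Theta_n^*-1}^0\|f_i\chi_{k+\ell_i+r}\|_{L^{\zeta q_i(\cdot)}_{\omega_i}}.
\]
For each $i$, Jensen's inequality on the finite sum of $2-\Theta_n^*$ terms in $r$ gives $c_{i,k}^{p_i}\le (2-\Theta_n^*)^{p_i-1}\sum_r 2^{k\alpha_i(0)p_i}\|f_i\chi_{k+\ell_i+r}\|^{p_i}_{L^{\zeta q_i(\cdot)}_{\omega_i}}$. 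Re-indexing with $k'=k+\ell_i+r$ and using $2^{-\ell_i\alpha_i(0)}\lesssim \|A_i(t)\|^{-\alpha_i(0)}$ (by $2^{\ell_i-1}<\|A_i(t)\|\le 2^{\ell_i}$), together with the equivalence of norms above, I recover
\[
\Bigl(\sum_k c_{i,k}^{p_i}\Bigr)^{1/p_i}\lesssim (2-\Theta_n^*)^{1-1/p_i}\|A_i(t)\|^{-\alpha_i(0)}\Bigl(\sum_{r=\Theta_n^*-1}^0 2^{-r\alpha_i(0)}\Bigr)\|f_i\|_{{\mathop K\limits^.}^{\alpha_i(\cdot),p_i}_{\zeta q_i(\cdot),\omega_i}},
\]
after absorbing the finite $r$-sum by $\ell^{p_i}\supset \ell^1$. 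Multiplying these bounds over $i$, the exponent of $(2-\Theta_n^*)$ collapses to $m-\sum_i 1/p_i=m-1/p$, which reproduces the factor appearing in $\mathcal C_4$. Inserting everything back into the $t$-integral recognizes exactly the constant $\mathcal C_4$ and yields
\[
\|H_{\Phi,\vec A}(\vec f)\|_{{\mathop K\limits^.}^{\alpha(\cdot),p}_{q(\cdot),\omega}}\lesssim \mathcal C_4\prod_{i=1}^m\|f_i\|_{{\mathop K\limits^.}^{\alpha_i(\cdot),p_i}_{\zeta q_i(\cdot),\omega_i}},
\]
as desired. The bookkeeping of the multi-Hölder exponents $p_i/p$ together with the change of index $k\mapsto k+\ell_i+r$ (which introduces the decisive factor $\|A_i(t)\|^{-\alpha_i(0)}$) is the main technical obstacle; everything else mirrors the argument already carried out in Theorem \ref{TheoremVar-MorreyHerz}.
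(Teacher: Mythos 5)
Your proposal is correct and follows essentially the same route as the paper: the pointwise-in-$k$ estimate inherited from Theorem \ref{TheoremVar-MorreyHerz}, Minkowski's integral inequality in $\ell^p(k)$, the discrete H\"{o}lder inequality with exponents $p_i/p$, Jensen on the finite $r$-sum producing the factor $(2-\Theta_n^*)^{m-1/p}$, and the re-indexing $k\mapsto k+\ell_i+r$ yielding $\|A_i(t)\|^{-\alpha_i(0)}$. The only cosmetic difference is that you invoke Proposition 2.5 of \cite{LZ2014} for the norm equivalence under $\alpha_i(0)=\alpha_{i\infty}$ where the paper cites Proposition 3.8 of \cite{Almeida2012}; both serve the identical purpose.
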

\begin{proof}
It follows from Proposition 3.8 in \cite{Almeida2012} that
\begin{eqnarray}
&&{\big\| {{H_{\Phi ,\vec A }}\big( {\vec f } \big)} \big\|_{{\mathop K\limits^.}_{q(\cdot),\omega}^{{\alpha(\cdot)}, p}}}\lesssim \Big(\sum\limits_{k=-\infty}^{-1}{2^{k\alpha(0)p}\big\|{{H_{\Phi ,\vec A }}\big( {\vec f } \big)}\chi_k\big\|_{L^{q(\cdot)}_{\omega}}^p}\Big)^{\frac{1}{p}}\nonumber
\\
&&\,\,\,\,\,\,\,\,\,\,\,\,\,\,\,\,\,\,\,\,\,\,\,\,\,\,\,\,\,\,\,\,\,\,\,\,\,\,\,\,\,\,\,\,\,\,\,\,\,\,\,\,\,\,\,\,\,\,\,\,\,\,\,\,\,+ \Big(\sum\limits_{k=0}^{\infty}{2^{k\alpha_\infty p}\big\|{{H_{\Phi ,\vec A }}\big( {\vec f } \big)}\chi_k\big\|_{L^{q(\cdot)}_{\omega}}^p}\Big)^{\frac{1}{p}}.\nonumber
\end{eqnarray}
From this, by $\alpha(0) =\alpha_\infty$, we conclude that
\begin{equation}\label{ptVarHerz}
{\big\| {{H_{\Phi ,\vec A }}\big( {\vec f } \big)} \big\|_{{\mathop K\limits^.}_{q(\cdot),\omega}^{{\alpha(\cdot)}, p}}}\lesssim \Big(\sum\limits_{k=-\infty}^{\infty}{2^{k\alpha(0)p}\big\|{{H_{\Phi ,\vec A }}\big( {\vec f } \big)}\chi_k\big\|_{L^{q(\cdot)}_{\omega}}^p}\Big)^{\frac{1}{p}}.
\end{equation}
For convenience, let us denote by 
\[
\mathcal H=\Big(\sum\limits_{k=-\infty}^{\infty}{2^{k\alpha(0)p}\big\|{{H_{\Phi ,\vec A }}\big( {\vec f } \big)}\chi_k\big\|_{L^{q(\cdot)}_{\omega}}^p}\Big)^{\frac{1}{p}}.
\] 
Next, we need to estimate  the upper bound of $\mathcal H$. By (\ref{LemmaVeH}) and using the Minkowski inequality,  we get
\begin{eqnarray}\label{H1L1}
\mathcal H &\leq& {\int\limits_{{\mathbb R^n}} {\frac{\Phi(t)}{|t|^n}\prod\limits_{i=1}^{m}c_{A_i,q_i,\gamma_i}(t)\big\|1\big\|_{L^{r_i(t,\cdot)}}}}\times
\\
&&\,\,\,\,\,\,\,\,\,\,\,\,\,\,\,\,\,\,\,\,\,\times\Big\{ {{{\sum\limits_{k =  - \infty }^{{\infty}} {{2^{k\alpha(0) p}}\prod\limits_{i = 1}^m {\Big( {\sum\limits_{r = {\Theta _n^*} - 1}^0 {\left\| {{f_i}{\chi _{k + {\ell_i} + r}}} \right\|_{L^{\zeta q_i(\cdot)}_{\omega_i}}^{}} } \Big)} } }^p}} \Big\} ^{\frac{1}{p}}dt.
\nonumber
\end{eqnarray}
By (\ref{lienhieppi}) and the H\"{o}lder inequality, it follows that
\begin{eqnarray}\label{HolderlrH1}
&&{\Big\{ {{{\sum\limits_{k = - \infty }^{{\infty}} {{2^{k\alpha(0) p}}\prod\limits_{i = 1}^m {\Big( {\sum\limits_{r = {\Theta _n^*} - 1}^0 {\left\| {{f_i}{\chi _{k + {\ell_i} + r}}} \right\|_{L^{\zeta q_i(\cdot)}_{\omega_i}}} } \Big)^p} } }}} \Big\}^{\frac{1}{p}}}
\\
&&\,\,\,\,\,\,\,\,\leq \prod\limits_{i = 1}^m {{{\Big\{ {{{\sum\limits_{k =  - \infty }^{{\infty}} {{2^{k{\alpha_i(0)}{p_i}}}\Big( {\sum\limits_{r = {\Theta_n^*} - 1}^0 {\left\| {{f_i}{\chi _{k + {\ell_i} + r}}} \right\|_{L^{\zeta q_i(\cdot)}_{\omega_i}}} } \Big)^{p_i}} }}} \Big\}}^{\frac{1}{{p_i}}}}}.\nonumber
\end{eqnarray}
On the other hand, by $p_i\geq 1$, we have
\[
{\Big( {\sum\limits_{r = {\Theta _n^*} - 1}^0 {\left\| {{f_i}{\chi _{k + {\ell_i} + r}}} \right\|_{L^{\zeta q_i(\cdot)}_{\omega_i}}^{}} } \Big)^{{p_i}}} \leq {\left( {2 - {\Theta _n^*}} \right)^{{p_i} - 1}}\sum\limits_{r = {\Theta _n^*} - 1}^0 {\left\| {{f_i}{\chi _{k + {\ell_i} + r}}} \right\|_{L^{\zeta q_i(\cdot)}_{\omega_i}}^{{p_i}}}.
\]
Hence, combining (\ref{H1L1}) and (\ref{HolderlrH1}), we obtain
\begin{equation}\label{HAf1Var}
\mathcal H \leq \int\limits_{{\mathbb R^n}} {{(2 - {\Theta_n^*})^{m - \frac{1}{p}}}\frac{\Phi(t)}{|t|^n}\prod\limits_{i=1}^{m}c_{A_i,q_i,\gamma_i}(t)\big\|1\big\|_{L^{r_i(t,\cdot)}}.\prod\limits_{i = 1}^m {{\mathcal{H}_{i}}} } dt,
\end{equation}
where $\mathcal{H}_{i} = {\sum\limits_{r = {\Theta _n^*} - 1}^0 {{{\Big( {\sum\limits_{k =  - \infty }^{{\infty}} {{2^{k{\alpha _i(0)}{p_i}}}\left\| {{f_i}{\chi _{k + {\ell_i} + r}}} \right\|_{L^{\zeta q_i(\cdot)}_{\omega_i}}^{{p_i}}} } \Big)}^{^{\frac{1}{{p_i}}}}}} }$ for all $i=1,2,...,m$. 
\\
Then, we find
\begin{eqnarray}\label{Hi}
{\mathcal{H}_{i}} &=& {\sum\limits_{r = {\Theta _n^*} - 1}^0 {{{\Big( {\sum\limits_{t =  - \infty }^{\infty} {{2^{(t-\ell_i-r){\alpha _i(0)}{p_i}}}\left\| {{f_i}{\chi _{t}}} \right\|_{L^{\zeta q_i(\cdot)}_{\omega_i}}^{{p_i}}} } \Big)}^{^{\frac{1}{{p_i}}}}}} }
\nonumber
\\
 &=&{\sum\limits_{r = {\Theta _n^*} - 1}^0 { 2^{-(\ell_i+r)\alpha_i(0)}{{\Big( {\sum\limits_{t =  - \infty }^{\infty} {{2^{t{\alpha _i(0)}{p_i}}}\left\| {{f_i}{\chi _{t}}} \right\|_{L^{\zeta q_i(\cdot)}_{\omega_i}}^{{p_i}}} } \Big)}^{^{\frac{1}{{p_i}}}}}} }
\nonumber
\\
&\leq& \big(\sum\limits_{r=\Theta_n^*-1}^{0} {2^{-r\alpha_i(0)}}\big).{2^{^{-\ell_i{\alpha_i(0)}}}}{\left\| {{f_i}} \right\|_{{\mathop K\limits^.}_{\zeta q_i(\cdot),\omega_i}^{{\alpha_i(\cdot)}, p_i}}}.
\end{eqnarray}
Since $2^{\ell_i-1}<\left\|A_i(t)\right\|\leq 2^{\ell_i}$, we imply that ${2^{^{{-\ell_i}{\alpha _i(0)}}}} \lesssim {\left\| {{A_i}(t)} \right\|^{- {\alpha_i(0)}}}$. Thus, by (\ref{HAf1Var}) and (\ref{Hi}), we get
\begin{eqnarray}
{\big\| {{H_{\Phi ,\vec A }}\big( {\vec f } \big)} \big\|_{{\mathop K\limits^.}_{q(\cdot),\omega}^{{\alpha(\cdot)}, p}}}\lesssim \mathcal C_4.\prod\limits_{i = 1}^m {{{\left\| {{f_i}} \right\|}_{{\mathop K\limits^.}_{\zeta q_i(\cdot),\omega_i}^{{\alpha_i(\cdot)}, p_i}}}},
\nonumber
\end{eqnarray}
which finishes our desired conclusion.
\end{proof}
\vskip 5pt
\hskip-15pt {\bfseries Remark 3.} We would like to give several comments on Theorem \ref{TheoremVar-MorreyHerz} and Theorem \ref{TheoremVarHerz}. If we suppose that
\[
\mathop {\rm ess\,sup}\limits_{t\in {\rm supp}(\Phi)}\big\|A_i(t)\big\|<\infty,\,\textit{\rm for all} \,i=1,...,m,
\]
then we do not need to assume the conditions $\alpha_i(0)-\alpha_{i\infty}\geq 0$ in Theorem \ref{TheoremVar-MorreyHerz} and $\alpha_i(0)=\alpha_{i\infty}$ in Theorem \ref{TheoremVarHerz}.
Indeed,  by putting 
\[
\beta= \mathop {\rm ess\,sup}\limits_{t\in {\rm supp}(\Phi)\,\textit{\rm and}\,i=1,...,m}\ell_i(t),
\] 
and applying Lemma \ref{lemmaVE} in Section 2, we refine the estimation as follow:
\\
In the case $k<\beta$, we get 
\[
\big\| f_i\chi_{k+\ell_i+r}\big\|_{L^{\zeta q_i(\cdot)}_{\omega_i}}\lesssim 2^{(k+\ell_i+r)(\lambda_i-\alpha_i(0))}\big\|f_i\big\|_{M{\mathop K\limits^.}^{\alpha_i(\cdot), \lambda_i}_{p_i,\zeta q_i(\cdot),\omega_i}}.
\]
In the case $k\geq {\beta} -\Theta_n^*+1$, we have
\[
\big\| f_i\chi_{k+\ell_i+r}\big\|_{L^{\zeta q_i(\cdot)}_{\omega_i}}\lesssim 2^{(k+\ell_i+r)(\lambda_i-\alpha_{i\infty})}\big\|f_i\big\|_{M{\mathop K\limits^.}^{\alpha_i(\cdot), \lambda_i}_{p_i,\zeta q_i(\cdot),\omega_i}}.
\]
Otherwise, then we obtain
\[
\big\| f_i\chi_{k+\ell_i+r}\big\|_{L^{\zeta q_i(\cdot)}_{\omega_i}}\lesssim \big(2^{(k+\ell_i+r)(\lambda_i-\alpha_i(0))}+2^{(k+\ell_i+r)(\lambda_i-\alpha_{i\infty})}\big)\big\|f_i\big\|_{M{\mathop K\limits^.}^{\alpha_i(\cdot), \lambda_i}_{p_i,\zeta q_i(\cdot),\omega_i}}.
\]
Also, the other estimations can be done by similar arguments as two theorems above. From this we omit details, and their proof are left to reader.
\begin{theorem}\label{TheoremVarMorreyHerz1}
Suppose that the given supposition of Theorem \ref{TheoremVar-MorreyHerz} and  the hypothesis (\ref{DKVarLeb}) in Theorem \ref{TheoremVarLebesgue1} are true.
\\
{\rm (a)} If 
\begin{eqnarray}
\mathcal C_5 &=&\int\limits_{\mathbb R^n}{\dfrac{\Phi(t)}{|t|^n}\prod\limits_{i=1}^{m}\max\Big\{\big\|A_i^{-1}(t)\big\|^{\frac{n}{q_{i+}}+\gamma_i}, \big\|A_i^{-1}(t)\big\|^{\frac{n}{q_{i-}}+\gamma_i}\Big\}\big\|A_i^{-1}(t)\big\|^{-\lambda_i}}\times\nonumber
\\
&&{{\,\,\,\,\,\,\,\,\,\,\,\,\,\,\,\,\,\,\,\,\,\,\,\,\,\,\,\,\,\,\,\,\,\,\,\,\,\,\,\times\max\Big\{\big\|A_i^{-1}(t)\big\|^{\alpha_i(0)}, \big\|A_i^{-1}(t)\big\|^{\alpha_{i\infty}} \Big\}}}\big\|1\big\|_{L^{r_{1i}(t,\cdot)}}dt<\infty,\nonumber
\end{eqnarray}
then 
\[
\big\|H_{\Phi,\vec A}(\vec f)\big\|_{M{\mathop K\limits^.}^{\alpha(\cdot), \lambda}_{p, q(\cdot),\omega}}\lesssim \mathcal C_5\prod\limits_{i=1}^{m}\big\| f_i\big\|_{M{\mathop K\limits^.}^{\alpha_i(\cdot), \lambda_i}_{p_i, q_i(\cdot),\omega_i}}.
\]
{\rm (b)} Denote by
\begin{eqnarray}
\mathcal C_5^* &=&\int\limits_{\mathbb R^n}{\dfrac{\Phi(t)}{|t|^n}\prod\limits_{i=1}^{m}\min\Big\{\big\|A_i^{-1}(t)\big\|^{\frac{n}{q_{i+}}+\gamma_i}, \big\|A_i^{-1}(t)\big\|^{\frac{n}{q_{i-}}+\gamma_i}\Big\}\big\|A_i^{-1}(t)\big\|^{-\lambda_i}}\times
\nonumber
\\
&&{{\,\,\,\,\,\,\,\,\,\,\,\,\,\,\,\,\,\,\,\,\,\,\,\,\,\,\,\,\,\,\,\,\,\,\,\,\,\,\,\,\,\,\times\min\Big\{\big\|A_i^{-1}(t)\big\|^{\alpha_i(0)+C_0^{\alpha_i}}, \big\|A_i^{-1}(t)\big\|^{\alpha_i(0)-C_0^{\alpha_i}} \Big\}}}dt.\nonumber
\end{eqnarray}
Suppose that $H_{\Phi,\vec{A}}$ is a bounded operator from ${M{\mathop K\limits^.}}^{\alpha_1(\cdot),\lambda_1}_{p_1, q_1(\cdot),\omega_1}\,\times\cdots\times {M{\mathop K\limits^.}}^{\alpha_m(\cdot),\lambda_m}_{p_m, q_m(\cdot),\omega_m}$ to ${M{\mathop K\limits^.}}^{\alpha(\cdot),\lambda}_{p,q(\cdot),\omega}$ and  one of the following conditions holds:
\begin{enumerate}
\item[\rm (b1)] $q_{i+}=q_{i-}$, $C_0^{\alpha_i}$ and $C_\infty^{\alpha_i}\leq \alpha_i(0)-\alpha_{i\infty}$, for all $i=1,...,m$;
\item[\rm (b2)]$q_{i+}\neq q_{i-}$, $C_0^{\alpha_i} = C_\infty^{\alpha_i}=0$, $\lambda_i=\alpha_i(0)=\alpha_{i\infty}$, for all $i=1,...,m$;
\item[\rm (b3)] $q_{i+}\neq q_{i-}$, both $C_0^{\alpha_i}$ and $C_\infty^{\alpha_i}$ are less than $ \alpha_i(0)-\alpha_{i\infty}$, $C_\infty^{\alpha_i}+C_0^{\alpha_i}\leq C^{\alpha_i}$, $\lambda_i\in [\eta^0_i, \eta^1_i]\cap [\zeta^0_i,\zeta^1_i]$, for all $i=1,...,m$.
\end{enumerate}
Here $C^{\alpha_i}=\dfrac{q_{i-}(\alpha_i(0)-\alpha_{i\infty})(1+\frac{q_{i+}}{q_{i-}})}{q_{i+}}$ and $\eta^0_i, \eta^1_i, \zeta^0_i,\zeta^1_i$ are defined by
\[
\eta^0_i=\dfrac{C_0^{\alpha_i} \frac{q_{i-}}{q_{i+}}-\alpha_i(0)\frac{q_{i-}}{q_{i+}}+\alpha_{i\infty}}{1-\frac{q_{i-}}{q_{i+}}},\eta^1_i=\dfrac{C_0^{\alpha_i} \frac{q_{i+}}{q_{i-}}-\alpha_i(0)\frac{q_{i+}}{q_{i-}}+\alpha_{i\infty}}{1-\frac{q_{i+}}{q_{i-}}},
\]
\[
\zeta^0_i=\dfrac{C_\infty^{\alpha_i} \frac{q_{i+}}{q_{i-}}-\alpha_i(0)+\alpha_{i\infty}\frac{q_{i+}}{q_{i-}}}{\frac{q_{i+}}{q_{i-}}-1}, \zeta^1_i=\dfrac{C_\infty^{\alpha_i} \frac{q_{i-}}{q_{i+}}-\alpha_i(0)+\alpha_{i\infty}\frac{q_{i-}}{q_{i+}}}{\frac{q_{i-}}{q_{i+}}-1}.
\]
Then, we have that $\mathcal C_5^*$ is finite. Furthermore,
\[
\big\|H_{\Phi,\vec A}\big\|_{{M{\mathop K\limits^.}}^{\alpha_1(\cdot),\lambda_1}_{p_1, q_1(\cdot),\omega_1}\,\times\cdots\times {M{\mathop K\limits^.}}^{\alpha_m(\cdot),\lambda_m}_{p_m, q_m(\cdot),\omega_m}\to {M{\mathop K\limits^.}}^{\alpha(\cdot),\lambda}_{p,q(\cdot),\omega}}\gtrsim \mathcal C_5^*.
\]
\end{theorem}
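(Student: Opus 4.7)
The plan is to prove parts (a) and (b) separately, both by lifting Theorem \ref{TheoremVarLebesgue1} to the weighted Morrey-Herz setting via the dyadic machinery already developed in Theorem \ref{TheoremVar-MorreyHerz}.

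For part (a), I would retrace the argument of Theorem \ref{TheoremVar-MorreyHerz} with $\zeta=1$, but replace the embedding step (\ref{nhung2}) by its $\zeta=1$ analogue (\ref{nhungVarLeb2}) borrowed from Theorem \ref{TheoremVarLebesgue1}(a). The key estimate (\ref{LemmaVeH}) then survives with $\|1\|_{L^{r_i(t,\cdot)}}$ replaced by $\|1\|_{L^{r_{1i}(t,\cdot)}}$ and the norms of the $f_i$ taken in $M{\mathop K\limits^.}^{\alpha_i(\cdot),\lambda_i}_{p_i,q_i(\cdot),\omega_i}$. Next, applying (\ref{cAiq+-}) absorbs $c_{A_i,q_i,\gamma_i}(t)$ into $\max\{\|A_i^{-1}(t)\|^{n/q_{i+}+\gamma_i},\|A_i^{-1}(t)\|^{n/q_{i-}+\gamma_i}\}$, while property (\ref{DK1.1}) converts $\max\{\|A_i(t)\|^{\lambda_i-\alpha_i(0)},\|A_i(t)\|^{\lambda_i-\alpha_{i\infty}}\}$ into $\|A_i^{-1}(t)\|^{-\lambda_i}\max\{\|A_i^{-1}(t)\|^{\alpha_i(0)},\|A_i^{-1}(t)\|^{\alpha_{i\infty}}\}$. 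Splitting the target Morrey-Herz norm into $E_1,E_2,E_3$ exactly as in Theorem \ref{TheoremVar-MorreyHerz} and handling each piece verbatim then produces the desired bound with constant $\mathcal C_5$.

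For part (b), the strategy is to construct test functions whose Morrey-Herz norms can be tightly bounded from above and whose images under $H_{\Phi,\vec A}$ can be bounded from below by a scalar multiple of a power function on a large annulus, then invoke the boundedness hypothesis. Inspired by Theorem \ref{TheoremVarLebesgue1}(b), the natural candidate is
\[
f_i(x)=|x|^{\lambda_i-\alpha_i(0)-n/q_i(x)-\gamma_i}\chi_{\{|x|\geq \rho_{\vec A}^{-1}\}}(x),
\]
possibly perturbed by a small $\varepsilon>0$. Decomposing dyadically, the log-H\"older hypotheses on $\alpha_i$ replace $|x|^{-\alpha_i(x)}$ with $|x|^{-\alpha_i(0)}|x|^{\pm C_0^{\alpha_i}}$ near the origin and with $|x|^{-\alpha_{i\infty}}|x|^{\pm C_\infty^{\alpha_i}}$ at infinity; the three cases (b1)--(b3) then correspond precisely to the regimes in which the resulting dyadic modular sums (whose exponents involve $\lambda_i,\alpha_i(0),\alpha_{i\infty},C_0^{\alpha_i},C_\infty^{\alpha_i},q_{i\pm}$) are simultaneously finite, and the intervals $[\eta^0_i,\eta^1_i]\cap[\zeta^0_i,\zeta^1_i]$ are exactly the intersection of the regions of convergence at $0$ and at $\infty$. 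On the operator side, restricting integration in $H_{\Phi,\vec A}$ to the set $U=\{t:\|A_i(t)\|\geq\varepsilon,\ i=1,\dots,m\}$ and using (\ref{DK1.2}) as in the proof of Theorem \ref{TheoremVarLebesgue1}(b) yields a pointwise lower bound for $H_{\Phi,\vec A}(\vec f)(x)$ on $\mathbb R^n\setminus B(0,\varepsilon^{-1})$ of the form of a power of $|x|$ times an integral in $t$ that, in the limit $\varepsilon\to 0$, reproduces the $\min$-quantity defining $\mathcal C_5^*$. Computing the Morrey-Herz norm of this power restriction, inserting into the boundedness hypothesis, and passing $\varepsilon\to 0$ by dominated convergence (with (\ref{prodLeb}) furnishing the majorant) will yield both the finiteness of $\mathcal C_5^*$ and the claimed lower bound on the operator norm.

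The main obstacle will be part (b): producing a family of test functions whose Morrey-Herz norm matches tightly the $\min$-structure appearing in $\mathcal C_5^*$ on the operator side while respecting the log-H\"older constraints at the origin and at infinity. The three cases (b1)--(b3) are introduced precisely to manage the mismatch between $q_{i+}$ and $q_{i-}$ in the modular estimates together with the log-H\"older penalties at the two endpoints; the explicit form of $\eta^0_i,\eta^1_i,\zeta^0_i,\zeta^1_i$ is forced by the requirement that the dyadic sums defining $\|f_i\|_{M{\mathop K\limits^.}^{\alpha_i(\cdot),\lambda_i}_{p_i,q_i(\cdot),\omega_i}}$ converge simultaneously near $0$ and near $\infty$, and the further constraint $C_\infty^{\alpha_i}+C_0^{\alpha_i}\leq C^{\alpha_i}$ in (b3) is what guarantees that these two intervals actually meet.
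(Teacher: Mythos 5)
Your part (a) is essentially the paper's own argument: rerun the proof of Theorem \ref{TheoremVar-MorreyHerz} with $\zeta=1$, substitute the embedding (\ref{nhungVarLeb2}) for (\ref{nhung2}), and then use (\ref{cAiq+-}) together with (\ref{DK1.1}) to convert $c_{A_i,q_i,\gamma_i}(t)$ and $\max\{\|A_i(t)\|^{\lambda_i-\alpha_i(0)},\|A_i(t)\|^{\lambda_i-\alpha_{i\infty}}\}$ into the quantities appearing in $\mathcal C_5$. That part is correct and matches the paper.

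Part (b) is where your plan both diverges from the paper and leaves a genuine gap. The paper uses no $\varepsilon$-perturbation, no restriction to the set $U$, and no dominated convergence here: it takes the single globally defined test function $f_i(x)=|x|^{-\alpha_i(x)-n/q_i(x)-\gamma_i+\lambda_i}$ with the \emph{variable} function $\alpha_i(x)$ in the exponent. Two things hinge on that choice. First, the factor $\min\{\|A_i^{-1}(t)\|^{\alpha_i(0)+C_0^{\alpha_i}},\|A_i^{-1}(t)\|^{\alpha_i(0)-C_0^{\alpha_i}}\}$ in $\mathcal C_5^*$ comes precisely from bounding $\|A_i^{-1}(t)\|^{\alpha_i(x)}$ from below via $|\alpha_i(x)-\alpha_i(0)|\leq C_0^{\alpha_i}$ in the pointwise estimate obtained from (\ref{DK1.2}); and the conditions (b1)--(b3), through the inequalities $\theta_{i0}\geq0$ and $\theta_{i\infty}\geq0$, are exactly what make the ${M\mathop K\limits^.}^{\alpha_i(\cdot),\lambda_i}_{p_i,q_i(\cdot),\omega_i}$-norm of \emph{this} test function finite (the endpoints $\eta_i^0,\eta_i^1,\zeta_i^0,\zeta_i^1$ encode the $\pm C_0^{\alpha_i}$ and $\pm C_\infty^{\alpha_i}$ penalties in the modular estimates of $\|f_i\chi_k\|_{L^{q_i(\cdot)}_{\omega_i}}$). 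With your constant exponent $\alpha_i(0)$ these penalties do not arise, so (b1)--(b3) are not "precisely" your convergence conditions as you claim. Second, and more seriously, because your $f_i$ is cut off at $|x|\geq\rho_{\vec A}^{-1}$, you are forced to restrict to $t\in U$ and $x\in\mathbb R^n\setminus B(0,\varepsilon^{-1})$, so your lower bound for the image lives only on far dyadic annuli, where the Morrey--Herz norm is weighted by $2^{k\alpha_\infty}$ and damped by $2^{-k_0\lambda}$; you would then need the ratio of this truncated norm to $\prod_i\|f_i\|$ to stay bounded below as $\varepsilon\to0$ uniformly over the regimes (b1)--(b3), which is not automatic when $\alpha_i(0)\neq\alpha_{i\infty}$ and is exactly the "main obstacle" you name but do not resolve. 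The paper avoids all of this: since its $f_i$ is supported on all of $\mathbb R^n$, the bound $H_{\Phi,\vec A}(\vec f)(x)\gtrsim\mathcal C_5^*\,|x|^{-\alpha(x)-n/q(x)-\gamma+\lambda}$ holds for every $x$, and the conclusion follows at once by dividing by $\prod_i\|f_i\|<\infty$. The $\varepsilon$/cutoff machinery is needed in Theorem \ref{TheoremVarLebesgue1}(b) only because pure power functions do not belong to $L^{q(\cdot)}_\omega$, whereas they do belong to the Morrey--Herz scale; dropping it and using the global variable-exponent test function closes the gap.
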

\begin{proof}
Firstly, we prove for the case ${\rm (a)}$. From (\ref{DK1}), we call $\Theta_n$ is the greatest integer number such that $\rho_{\vec A} < 2^{-\Theta_n}$. Now, we replace $\Theta_n^*$ by $\Theta_n$ in the proof of Theorem \ref{TheoremVar-MorreyHerz}, and the other results are estimated in the same way. Then, by (\ref{nhungVarLeb2}), we get 
\begin{eqnarray}\label{MHVar1}
&&\big\|H_{\Phi,\vec A}(\vec f)\big\|_{M{\mathop K\limits^.}^{\alpha(\cdot), \lambda}_{p, q(\cdot),\omega}}\lesssim \Big(\int\limits_{\mathbb R^n}{\frac{\Phi(t)}{|t|^n}\prod\limits_{i=1}^{m}c_{A_i,q_i,\gamma_i}(t)}\big\|1\big\|_{L^{r_{1i}(t,\cdot)}}\times
\\
&&{\,\,\,\,\,\,\,\,\,\,\,\,\,\,\,\,\times\max\Big\{\big\|A_i(t)\big\|^{\lambda_i-\alpha_i(0)}, \big\|A_i(t)\big\|^{\lambda_i-\alpha_{i\infty}} \Big\}}dt\Big)\prod\limits_{i=1}^{m}\big\| f_i\big\|_{M{\mathop K\limits^.}^{\alpha_i(\cdot), \lambda_i}_{p_i, q_i(\cdot),\omega_i}}.
\nonumber
\end{eqnarray}
By the inequality (\ref{DK1.1}), we have
\begin{eqnarray}
&&\max\Big\{\big\|A_i(t)\big\|^{\lambda_i-\alpha_i(0)}, \big\|A_i(t)\big\|^{\lambda_i-\alpha_{i\infty}} \Big\}\nonumber
\\
&&\,\,\,\,\,\,\,\,\,\,\,\,\,\,\,\,\,\,\,\,\,\,\,\,\,\,\,\,\,\,\,\,\,\,\,\,\lesssim \big\|A_i^{-1}(t)\big\|^{-\lambda_i}\max\Big\{\big\|A_i^{-1}(t)\big\|^{\alpha_i(0)}, \big\|A_i^{-1}(t)\big\|^{\alpha_{i\infty}} \Big\}.
\nonumber
\end{eqnarray}
Thus, by (\ref{cAiq+-}), (\ref{MHVar1}) and $\mathcal C_3 <\infty$, we finish the proof for this case.
\vskip 5pt
Next, we will prove for case ${\rm (b)}$. By choosing
\[
f_i(x)=|x|^{-\alpha_i(x)-\frac{n}{q_i(x)}-\gamma_i+\lambda_i}, 
\]
it is evident  that $\big\| f_i\big\|_{{M{\mathop K\limits^.}}^{\alpha_i(\cdot),\lambda_i}_{p_i, q_i(\cdot),\omega_i}}>0$, for all $i=1,...,m$. Now, we need to show that 
\begin{equation}\label{fiMHVar}
 \big\| f_i\big\|_{{M{\mathop K\limits^.}}^{\alpha_i(\cdot),\lambda_i}_{p_i, q_i(\cdot),\omega_i}}<\infty,\,\textit{\rm for all }\, i=1,...,m.
\end{equation}
Indeed, we find
\[
F_{q_i}(f_i\omega_i.\chi_k)=\int\limits_{C_k}{|x|^{(\lambda_i-\alpha_i(x))q_i(x)-n}dx}=\int\limits_{2^{k-1}}^{2^k}\int\limits_{S^{n-1}}{r^{(\lambda_i-\alpha_i(r.x'))q_i(r.x')-1}d\sigma(x')}dr.
\]
\textit{Case 1}: $k\leq 0$. Since $\alpha_i \in \mathbf C_\infty^{\rm log}(\mathbb R^n)$, it follows that
\[
- C_{\infty}^{\alpha_i}+ \alpha_{i\infty}\leq \alpha_i(x)\leq \alpha_{i\infty}+ C_{\infty}^{\alpha_i}.
\]
As a consequence, we get
\begin{eqnarray}
F_{q_i}(f_i\omega_i.\chi_k)&\leq& \int\limits_{2^{k-1}}^{2^k}\int\limits_{S^{n-1}}{r^{(\lambda_i-\alpha_{i\infty}-C_{\infty}^{\alpha_i})q_i(r.x')-1}d\sigma(x')}dr \nonumber
\\
&\lesssim &\textit{\rm max}\big\{ 2^{k(\lambda_i-\alpha_{i\infty}-C_{\infty}^{\alpha_i})q_{i-}}, 2^{k(\lambda_i-\alpha_{i\infty}-C_{\infty}^{\alpha_i})q_{i+}} \big\}\nonumber.
\end{eqnarray}
Thus, by (\ref{maxminvar}), we obtain
\begin{eqnarray}\label{fiXkVar}
\big\|f_i\chi_k\big\|_{L^{q_i(\cdot)}_{\omega_i}}&\lesssim & \textit{\rm max}\big\{ 2^{k(\lambda_i-\alpha_{i\infty}-C_{\infty}^{\alpha_i})\frac{q_{i-}}{q_{i+}}}, 2^{k(\lambda_i-\alpha_{i\infty}-C_{\infty}^{\alpha_i})\frac{q_{i+}}{q_{i-}}} \big\}
\nonumber
\\
&=& 2^{k(\lambda_i-\alpha_{i\infty}-C_{\infty}^{\alpha_i})\beta_{i\infty}},
\end{eqnarray}
where 
\[
\beta_{i\infty} = \left\{ \begin{array}{l}
\dfrac{q_{i+}}{q_{i-}}, \, \textit{\rm if}\, \lambda_i-\alpha_{i\infty}-C_{\infty}^{\alpha_i} <0,
\\
\\
\dfrac{q_{i-}}{q_{i+}}, \,\textit{\rm otherwise.}
\end{array} \right.
\]

\textit{Case 2}: $k>0$. 
Since $\alpha_i \in \mathbf C_0^{\rm log}(\mathbb R^n)$, we have 
\begin{equation}\label{C0alphai}
- C_{0}^{\alpha_i}+ \alpha_i(0)\leq \alpha_i(x)\leq \alpha_i(0)+ C_{0}^{\alpha_i}.
\end{equation}
Denote 
\[
\beta_{i0} = \left\{ \begin{array}{l}
\dfrac{q_{i+}}{q_{i-}}, \, \textit{\rm if }\, \lambda_i-\alpha_{i}(0)+C_{0}^{\alpha_i} \geq 0,
\\
\\
\dfrac{q_{i-}}{q_{i+}}, \,\textit{\rm otherwise.}
\end{array} \right.
\]
By having (\ref{C0alphai}) and estimating in the same way as the case 1, we deduce
\begin{equation}\label{fiXkVar1}
\big\|f_i\chi_k\big\|_{L^{q_i(\cdot)}_{\omega_i}} \lesssim 2^{k(\lambda_i-\alpha_{i}(0)+C_{0}^{\alpha_i})\beta_{i0}}.
\end{equation}
Next, it follows from Proposition 2.5 in \cite{LZ2014} that
\begin{equation}\label{ptfiMHVar}
\big\|f_i\big\|_{M{\mathop K\limits^.}^{\alpha_i(\cdot), \lambda}_{p_i,q_i(\cdot),\omega_i}} \lesssim \max\big\{
\sup\limits_{k_0<0,k_0\in\mathbb Z}E_{i,1}, \sup\limits_{k_0\geq 0,k_0\in\mathbb Z}(E_{i,2}+E_{i,3})
\big\},
\end{equation}
where
\begin{eqnarray}
&&E_{i,1}=2^{-k_0\lambda_i}\Big(\sum\limits_{k=-\infty}^{k_0}2^{k\alpha_i(0)p_i}\big\|f_i\big\|^{p_i}_{L^{q_i(\cdot)}_{\omega_i}}\Big)^{\frac{1}{p_i}},\nonumber
\\
&&E_{i,2} = 2^{-k_0\lambda_i}\Big(\sum\limits_{k=-\infty}^{-1}2^{k\alpha_i(0)p_i}\big\|f_i\big\|^{p_i}_{L^{q_i(\cdot)}_{\omega_i}}\Big)^{\frac{1}{p_i}},\nonumber
\\
&&E_{i,3}=2^{-k_0\lambda_i}\Big(\sum\limits_{k=0}^{k_0}2^{k\alpha_{i\infty} p_i}\big\|f_i\big\|^{p_i}_{L^{q_i(\cdot)}_{\omega_i}}\Big)^{\frac{1}{p_i}}\nonumber.
\end{eqnarray}
Notice that the relation $\alpha_i(0)+(\lambda_i-\alpha_{i\infty}-C_{\infty}^{\alpha_i})\beta_{i\infty}$ is required positively which is proved later, beacause $E_{i,1}$ is infinite otherwise. Thus, because of  (\ref{fiXkVar}), we have $E_{i,1}$ and $E_{i,2}$ are dominated by 
\begin{eqnarray}\label{E12iMHVar}
E_{i,1}&\lesssim& 2^{-k_0\lambda_i}\Big(\sum\limits_{k=-\infty}^{k_0}2^{k\alpha_i(0)p_i}2^{kp_i(\lambda_i-\alpha_{i\infty}-C_{\infty}^{\alpha_i})\beta_{i\infty}}\Big)^{\frac{1}{p_i}}
\nonumber
\\
&\lesssim& 2^{k_0(\alpha_i(0)+ (\lambda_i-\alpha_{i\infty}-C_{\infty}^{\alpha_i})\beta_{i\infty}-\lambda_i)},
\nonumber
\\
E_{i,2}&\lesssim& 2^{-k_0\lambda_i}.2^{-\alpha_i(0)-(\lambda_i-\alpha_{i\infty}-C_{\infty}^{\alpha_i})\beta_{i\infty}}\lesssim 2^{-k_0\lambda_i}.
\end{eqnarray}
\\
By (\ref{fiXkVar1}), we have $E_{i,3}$ is controlled by
\begin{eqnarray}
E_{i,3}&\lesssim & 2^{-k_0\lambda_i}+2^{-k_0\lambda_i}\Big(\sum\limits_{k=1}^{k_0}2^{k\alpha_{i\infty} p_i}\big\|f_i\big\|^{p_i}_{L^{q_i(\cdot)}_{\omega_i}}\Big)^{\frac{1}{p_i}}
\nonumber
\\
&\lesssim & 2^{-k_0\lambda_i}+2^{-k_0\lambda_i}\Big(\sum\limits_{k=1}^{k_0}2^{kp_i(\alpha_{i\infty}+(\lambda_i-\alpha_i(0)+C_0^{\alpha_i})\beta_{i0})}\Big)^{\frac{1}{p_i}}
\nonumber
\\
&\lesssim & \left\{ \begin{array}{l}
2^{-k_0\lambda_i}(k_0^{\frac{1}{p}}+1), \,\textit{\rm if}\,\, \alpha_{i\infty}+(\lambda_i-\alpha_i(0)+C_0^{\alpha_i})\beta_{i0} =0, 
\\
\\
2^{-k_0\lambda_i}+2^{-k_0(\lambda_i-\alpha_{i\infty}-(\lambda_i-\alpha_i(0)+C_0^{\alpha_i})\beta_{i0})},\,\textit{\rm otherwise}.
\end{array} \right.
\nonumber
\end{eqnarray}
This implies that
\begin{equation}\label{Ei3MHVar}
E_{i,3} \lesssim  2^{-k_0\lambda_i}(k_0^{\frac{1}{p}}+1)+ 2^{-k_0(\lambda_i-\alpha_{i\infty}-(\lambda_i-\alpha_i(0)+C_0^{\alpha_i})\beta_{i0})}.
\end{equation}
For convenience, we set
\begin{equation}\label{DKfi}
\left\{ \begin{array}{l}
\theta_{i0}= \lambda_i-\alpha_{i\infty}-(\lambda_i-\alpha_i(0)+C_0^{\alpha_i})\beta_{i0},
\\
\theta_{i\infty}= \alpha_i(0)+ (\lambda_i-\alpha_{i\infty}-C_{\infty}^{\alpha_i})\beta_{i\infty}-\lambda_i.
\end{array} \right.
\end{equation}
Combining (\ref{ptfiMHVar})-(\ref{DKfi}), we get that
\[
\big\|f_i\big\|_{M{\mathop K\limits^.}^{\alpha_i(\cdot), \lambda}_{p_i,q_i(\cdot)}(\omega_i)} \lesssim \max\big\{
\sup\limits_{k_0<0,k_0\in\mathbb Z}2^{k_0\theta_{i\infty}}, \sup\limits_{k_0\geq 0,k_0\in\mathbb Z}\big(2^{-k_0\lambda_i}(k_0^{\frac{1}{p}}+1)+2^{-k_0\theta_{i0}}\big)
\big\}.
\]
From the above estimation, we will finish the proof of (\ref{fiMHVar}) if  the following result can be proved
\begin{equation}\label{DKfi*}
\theta_{i0}\geq 0\,\textit{\rm and}\,\,\theta_{i\infty}\geq 0.
\end{equation}
In order to do this, let us consider three cases as follows.
\vskip 5pt
\textit{Case b1}. By $q_{i+}=q_{i-}$, we have $\beta_{i0}=\beta_{i\infty}=1$. So, by the information
of $C_0^{\alpha_i}$ and $C_\infty^{\alpha_i}$, it is easy to have the desired result (\ref{DKfi*}).
\vskip 5pt
\textit{Case b2}. In this case, we find $\theta_{i0}=\theta_{i\infty}=0$. This follows immediately that the result (\ref{DKfi*}) is true.
\vskip 5pt
\textit{Case b3}. Because both $C_0^{\alpha_i}$ and $C_\infty^{\alpha_i}$ are less than $ \alpha_i(0)-\alpha_{i\infty}$, we have
$[\eta^0_i, \eta^1_i]$ and $[\zeta^0_i,\zeta^1_i]$ are not empty sets. Also, we obtain
\begin{equation}\label{empty}
\alpha_i(0)-C_{0}^{\alpha_i}\in [\eta^0_i, \eta^1_i]\,\textit{\rm and}\,\,\alpha_{i\infty}+C_\infty^{\alpha_i}\in [\zeta^0_i,\zeta^1_i].
\end{equation}
From $C_\infty^{\alpha_i}+C_0^{\alpha_i}\leq C^{\alpha_i}$, it implies that $\eta^1_i\geq \zeta^0_i\,\textit{\rm and}\,\,\zeta^1_i\geq \eta^0_i.$
Hence, we also have $[\eta^0_i, \eta^1_i]\cap [\zeta^0_i,\zeta^1_i]$ is not an empty set. Thus, by (\ref{empty}), we observe that
\begin{eqnarray}
[\eta^0_i, \eta^1_i]\cap [\zeta^0_i,\zeta^1_i]
&=&\Big([\eta^0_i,\alpha_i(0)-C_{0}^{\alpha_i})\cup[\alpha_i(0)-C_{0}^{\alpha_i}, \eta^1_i]\Big)\nonumber
\\
&&\,\,\,\,\cap \Big([\zeta^0_i,\alpha_{i\infty}+C_\infty^{\alpha_i})\cup[\alpha_{i\infty}+C_\infty^{\alpha_i},\zeta^1_i]\Big).
\nonumber
\end{eqnarray}
For the above separation, by calculating directly and defining $\beta_{i0}$, $\beta_{i\infty}$, we have that
\[
\lambda_i\in [\eta^0_i, \eta^1_i]\cap [\zeta^0_i,\zeta^1_i]\Leftrightarrow (\ref{DKfi*})\,\,\, \text {\rm holds}.
\]
This claims that the desired estimation (\ref{fiMHVar}) is completed.
Combining (\ref{DK1.2}) and (\ref{C0alphai}), we obtain
\begin{eqnarray}
H_{\Phi,\vec A}(\vec f)(x)&=&\int\limits_{\mathbb R^n}{\dfrac{\Phi(t)}{|t|^n}\prod\limits_{1}^{m}{|A_i(t)x|^{-\alpha_i(x)-\frac{n}{q_i(x)}-\gamma_i+\lambda_i}}dt}\nonumber
\\
&\gtrsim & \Big(\int\limits_{\mathbb R^n}{\dfrac{\Phi(t)}{|t|^n}\prod\limits_{1}^{m}{\big\|A_i^{-1}(t)\big\|^{\alpha_i(x)+\frac{n}{q_i(x)}+\gamma_i-\lambda_i}}dt}\Big).|x|^{-\alpha(x)-\frac{n}{q(x)}-\gamma+\lambda}\nonumber
\\
&\gtrsim & \mathcal C_5^*.|x|^{-\alpha(x)-\frac{n}{q(x)}-\gamma+\lambda}\nonumber.\
\end{eqnarray}
From this, because of (\ref{fiMHVar}) and assuming that $H_{\Phi,\vec{A}}$ is a bounded operator, 
we conclude 
\[
\big\|H_{\Phi,\vec{A}}\big\|_{{M{\mathop K\limits^.}}^{\alpha_1(\cdot),\lambda_1}_{p_1, q_1(\cdot),\omega_1}\,\times\cdots\times {M{\mathop K\limits^.}}^{\alpha_m(\cdot),\lambda_m}_{p_m, q_m(\cdot),\omega_m}\to {M{\mathop K\limits^.}}^{\alpha(\cdot),\lambda}_{p,q(\cdot),\omega}}\gtrsim\mathcal C_5^*\frac{\big\||\cdot|^{-\alpha(\cdot)-\frac{n}{q(\cdot)}-\gamma+\lambda}\big\|_{{M{\mathop K\limits^.}}^{\alpha(\cdot),\lambda}_{p,q(\cdot),\omega}}}{\prod\limits_{i=1}^m \big\|f_i\big\|_{{M{\mathop K\limits^.}}^{\alpha_i(\cdot),\lambda_i}_{p_i, q_i(\cdot),\omega_i}}} .
\]
This implies the desired assertion.
\end{proof}
\begin{theorem}\label{TheoremVarHerz1}
Suppose that the assumptions of Theorem \ref{TheoremVarHerz} and the hypothesis (\ref{DKVarLeb}) in Theorem \ref{TheoremVarLebesgue1} are true.
\\
{\rm (a)} If 
\[
\mathcal C_6 =\int\limits_{\mathbb R^n}{\dfrac{\Phi(t)}{|t|^n}\prod\limits_{i=1}^{m}\max\Big\{\big\|A_i^{-1}(t)\big\|^{\frac{n}{q_{i+}}+\gamma_i}, \big\|A_i^{-1}(t)\big\|^{\frac{n}{q_{i-}}+\gamma_i}\Big\}\big\|A_i^{-1}(t)\big\|^{\alpha_{i}(0)}}\big\|1\big\|_{L^{r_{1i}(t,\cdot)}}dt<\infty,
\]
then $H_{\Phi,\vec{A}}$ is a bounded operator from ${\mathop K\limits^.}_{q_1(\cdot),{\omega_1}}^{{\alpha _1(\cdot)}, p_1} \times \cdots\times {\mathop K\limits^.}_{q_m(\cdot),{\omega_m}}^{{\alpha _m(\cdot)}, p_m} $ to ${\mathop K\limits^.}_{q(\cdot),{\omega}}^{{\alpha(\cdot)}, p}.$
\\
\\
{\rm (b)} Denote by
\[
\mathcal C_6^* = \left\{ \begin{array}{l}
\int\limits_{\mathbb R^n}{\dfrac{\Phi(t)}{|t|^n}\prod\limits_{i=1}^{m}\big\|A_i^{-1}(t)\big\|^{\alpha_i(0)+\frac{n}{q_{i}}+\gamma_i}}dt,\,\textit{\rm if}\,\,q_{i+}=q_{i-} \,\,\textit{\rm for all}\,\,i=1,...,m,
\\
\\
\int\limits_{\mathbb R^n}{\dfrac{\Phi(t)}{|t|^n}\prod\limits_{i=1}^{m}\min\Big\{\big\|A_i^{-1}(t)\big\|^{\frac{n}{q_{i+}}+\gamma_i}, \big\|A_i^{-1}(t)\big\|^{\frac{n}{q_{i-}}+\gamma_i}\Big\}\big\|A_i^{-1}(t)\big\|^{\|\alpha_i\|_{L^{\infty}}}}dt,\,\textit{\rm otherwise.}
\end{array} \right.\]
Let $H_{\Phi,\vec{A}}$ be a bounded operator from ${\mathop K\limits^.}_{q_1(\cdot),{\omega_1}}^{{\alpha _1(\cdot)}, p_1} \times \cdots\times {\mathop K\limits^.}_{q_m(\cdot),{\omega_m}}^{{\alpha _m(\cdot)}, p_m} $ to ${\mathop K\limits^.}_{q(\cdot),{\omega}}^{{\alpha(\cdot)}, p}$ and one of the following conditions is satisfied:
\begin{enumerate}
\item[\rm (b1)] $q_{i-}=q_{i+},\,\textit{\rm for all}\,\,i=1,...,m$;
\item[\rm (b2)] The case $\rm (b1)$ is not true and $\alpha_i(0) < \|\alpha_i\|_{L^\infty}\dfrac{q_{i-}}{q_{i+}},\,\textit{\rm for all}\,\,i=1,...,m$.
\end{enumerate}
Then, we have that $\mathcal C_6^*$ is finite. Furthermore, there exists $C>0$ such that the operator norm of $H_{\Phi,\vec A}$ is not greater than $C.\mathcal C_6^*$.
\end{theorem}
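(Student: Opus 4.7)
The plan is to specialise the Morrey--Herz arguments of Theorem~\ref{TheoremVarMorreyHerz1} to the case $\lambda_1=\cdots=\lambda_m=0$, while re-using the summation/Minkowski machinery already set up in the proof of Theorem~\ref{TheoremVarHerz}. Throughout I would work with the constant $\Theta_n$ (defined by $\rho_{\vec A}<2^{-\Theta_n}$) in place of the $t$-dependent $\Theta_n^*$, exactly as in Theorem~\ref{TheoremVarMorreyHerz1}.

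For part (a) I would start from the decomposition (\ref{ptVarHerz}) and the key annular estimate (\ref{LemmaVeH}) obtained in Theorem~\ref{TheoremVarHerz}, but with $\zeta=1$. The step that there invoked Theorem~\ref{theoembed} via hypothesis (\ref{DKVarLebesgue}) is now replaced by the embedding (\ref{nhungVarLeb2}) coming from hypothesis (\ref{DKVarLeb}); this swaps $\|1\|_{L^{r_i(t,\cdot)}}$ for $\|1\|_{L^{r_{1i}(t,\cdot)}}$. Next, (\ref{cAiq+-}) bounds $c_{A_i,q_i,\gamma_i}(t)$ by $\max\{\|A_i^{-1}(t)\|^{n/q_{i+}+\gamma_i},\|A_i^{-1}(t)\|^{n/q_{i-}+\gamma_i}\}$ and (\ref{DK1.1}) gives $\|A_i(t)\|^{-\alpha_i(0)}\lesssim \|A_i^{-1}(t)\|^{\alpha_i(0)}$. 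Substituting these into the kernel factor of $\mathcal C_4$ turns it precisely into the kernel factor of $\mathcal C_6$, while the Minkowski/Hölder summation in $k$ goes through verbatim, delivering the desired upper bound.

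For part (b) I take the test functions
\[
f_i(x)=|x|^{-\alpha_i(x)-\frac{n}{q_i(x)}-\gamma_i},\qquad i=1,\dots,m,
\]
i.e.\ the $\lambda_i=0$ specialisations of the ones used in Theorem~\ref{TheoremVarMorreyHerz1}(b). The first task is to verify $0<\|f_i\|_{\mathop{K}\limits^.^{\alpha_i(\cdot),p_i}_{q_i(\cdot),\omega_i}}<\infty$. Splitting into $k\leq 0$ and $k>0$ and using the log-Hölder continuity of $\alpha_i$ at infinity and at the origin respectively, the annular estimates (\ref{fiXkVar})--(\ref{fiXkVar1}) from the Morrey--Herz proof carry over. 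The sum over $k\leq 0$ converges since $\alpha_i(0)>0$ contributes positively in that direction; the sum over $k>0$ converges trivially under (b1), where $q_{i-}=q_{i+}$ makes $\beta_{i0}=1$, and under (b2) precisely because the strict inequality $\alpha_i(0)<\|\alpha_i\|_{L^\infty}\,q_{i-}/q_{i+}$ forces the relevant geometric ratio to be $<1$.

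With the test functions in hand, (\ref{DK1.2}) gives the pointwise lower bound
\[
H_{\Phi,\vec A}(\vec f)(x)\gtrsim \Big(\int\limits_{\mathbb R^n}\frac{\Phi(t)}{|t|^n}\prod_{i=1}^m\|A_i^{-1}(t)\|^{\alpha_i(x)+\frac{n}{q_i(x)}+\gamma_i}\,dt\Big)\,|x|^{-\alpha(x)-\frac{n}{q(x)}-\gamma}.
\]
Under (b1) the exponent $n/q_i$ is constant and the log-Hölder oscillation of $\alpha_i$ can be absorbed into the implicit constant, giving the integrand $\prod_i\|A_i^{-1}(t)\|^{\alpha_i(0)+n/q_i+\gamma_i}$ of $\mathcal C_6^*$. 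Under (b2) I would split according to whether $\|A_i^{-1}(t)\|\geq 1$ or $<1$ and use the extremal values $\|\alpha_i\|_{L^\infty}$ and $\min\{n/q_{i+}+\gamma_i,n/q_{i-}+\gamma_i\}$ to obtain the integrand of $\mathcal C_6^*$ in the ``otherwise'' branch. Applying the assumed boundedness of $H_{\Phi,\vec A}$, then dividing by $\prod_i\|f_i\|_{\mathop{K}\limits^.^{\alpha_i(\cdot),p_i}_{q_i(\cdot),\omega_i}}$, yields both the finiteness of $\mathcal C_6^*$ and the stated lower bound on the operator norm. The principal obstacle is the case-by-case bookkeeping in (b): matching the log-Hölder oscillation of $\alpha_i(\cdot)$ with the variation of $n/q_i(\cdot)$ must be done with the extremal exponents chosen consistently so that the test-function norms remain finite \emph{and} the lower-bound integrand matches $\mathcal C_6^*$; the dichotomy (b1)/(b2) is precisely engineered for this to succeed.
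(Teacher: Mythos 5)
Your part (a) coincides with the paper's argument (the paper literally combines Theorem \ref{TheoremVarHerz} with part (a) of Theorem \ref{TheoremVarMorreyHerz1}), so there is nothing to object to there. The problem is part (b): the test functions $f_i(x)=|x|^{-\alpha_i(x)-\frac{n}{q_i(x)}-\gamma_i}$ do \emph{not} belong to the Herz spaces ${\mathop K\limits^.}_{q_i(\cdot),\omega_i}^{\alpha_i(\cdot),p_i}$. Since the standing hypotheses of Theorem \ref{TheoremVarHerz} force $\alpha_i(0)=\alpha_{i\infty}$, the Herz norm reduces to the bi-infinite sum $\big(\sum_{k\in\mathbb Z}2^{k\alpha_i(0)p_i}\|f_i\chi_k\|^{p_i}_{L^{q_i(\cdot)}_{\omega_i}}\big)^{1/p_i}$, and for this critical power every annular term is bounded below by a positive constant (already under (b1), where $q_i$ is constant, one has $\|f_i\chi_k\|_{L^{q_i}_{\omega_i}}\simeq 2^{-k\alpha_i(0)}$ up to log-H\"older corrections), so the sum diverges. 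The mechanism that saves the analogous function in Theorem \ref{TheoremVarMorreyHerz1}(b) is the Morrey-type normalisation $\sup_{k_0}2^{-k_0\lambda_i}(\sum_{k\le k_0}\cdots)^{1/p_i}$ with $\lambda_i>0$; with $\lambda_i=0$ it is unavailable, and your claim that the sum over $k\le 0$ converges ``since $\alpha_i(0)>0$'' has no basis ($\alpha_i(0)>0$ is not assumed, and even if it were, the two powers of $2^k$ cancel).

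The paper's proof circumvents this by using truncated, $\varepsilon$-perturbed powers: $f_i(x)=|x|^{-\alpha_i(0)-\frac{n}{q_i}-\gamma_i-\varepsilon}\chi_{\{|x|\ge\rho_{\vec A}^{-1}\}}$ in case (b1), and the same with $\|\alpha_i\|_{L^\infty}$ in place of $\alpha_i(0)$ in case (b2) (this is exactly where the hypothesis $\alpha_i(0)<\|\alpha_i\|_{L^\infty}q_{i-}/q_{i+}$ enters, to make $\sum_{k\ge 1}2^{k\alpha_i(0)p_i}\|f_i\chi_k\|^{p_i}$ converge). One then derives a pointwise lower bound only on $\mathbb R^n\setminus B(0,\varepsilon^{-1})$ via the inclusion $U\subset S_x$ of (\ref{USx'Leb}), obtains an $\varepsilon$-dependent constant $\Gamma^*_\varepsilon$ integrated over $U=\{t:\|A_i(t)\|\ge\varepsilon,\ \forall i\}$, checks that the normalising factor $\varepsilon^{-m\varepsilon}\vartheta^{**}(\varepsilon)$ has a positive limit, and finally lets $\varepsilon\to0^+$ with the dominated convergence theorem to recover $\mathcal C_6^*$. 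This entire limiting apparatus is what your proposal is missing; without it the quotient $\|H_{\Phi,\vec A}(\vec f)\|/\prod_i\|f_i\|$ you propose to compute is of the form $\infty/\infty$ and yields no bound on the operator norm.
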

\begin{proof}
In the case $\rm (a)$, by combining Theorem \ref{TheoremVarHerz} and the part $\rm (a)$ of Theorem \ref{TheoremVarMorreyHerz1}, we immediately imply the desired result. 
\vskip 5pt
In the case $\rm (b1)$, we have that $q_1(\cdot), ... , q_m(\cdot),$ and $ q(\cdot)$ are constant. Thus, for all $i=1,..., m$,
we will choose the function $f_i$ as follows:
\[{f_i}(x) =
\begin{cases}
0,\,\,\;\;\;\;\;\,\,\,\,\,\,\,\,\,\,\,\,\,\,\,\,\,\,\,\,\,\,\,\,\,\,\,{\rm if }\, | x | < p_{\vec A}^{ - 1},&\\
  {| x |^{ - {\alpha _i(0)} - \frac{n}{q_i}-\gamma_i - \varepsilon }},\,\,{\rm otherwise.}&
\end{cases}
\]
It is obvious to see that when $k$ is an integer number satisfying $k\leq\frac{-{\rm log}(\rho_{\vec A})}{{\rm log}(2)}$ then $\big\|f_i\chi_k\big\|_{L^{q_i}_{\omega_i}} =0$. Otherwise, we have
\[
\big\|f_i\chi_k\big\|_{L^{q_i}_{\omega_i}}^{q_i}\lesssim 2^{-kq_i(\alpha_i(0)+\varepsilon)}\dfrac{\big(2^{q_i(\alpha_i(0)+\varepsilon)}-1\big)}{{q_i(\alpha_i(0)+\varepsilon)}}.
\]
Hence, by applying Proposition 3.8 in \cite{Almeida2012} again and $\alpha_i(0)=\alpha_{i\infty}$, we find
\begin{eqnarray}
{\left\| f_i\right\|_{{\mathop K\limits^.}_{q_i, {\omega_i}}^{{\alpha_i(\cdot)}, p_i}}} &\lesssim & {\left\{ {\sum\limits_{k =\rho}^\infty  {{2^{k{\alpha _i(0)}{p_i}}}} \left\| {{f_i}{\chi _k}} \right\|_{L^{q_i}_{\omega_i}}^{{p_i}}} \right\}^{\frac{1}{{p_i}}}}\nonumber
\\
&\lesssim& {\left(\dfrac{{{2^{{q_i}({\alpha _i(0)} + \varepsilon )}} - 1}}{q_i(\alpha_i(0)+\varepsilon)} \right)^{\frac{1}{{{q_i}}}}}{\left( {\frac{{{2^{\varepsilon {p_i} - \rho\varepsilon {p_i}}}}}{{{2^{\varepsilon {p_i}}} - 1}}} \right)^{\frac{1}{{{p_i}}}}} < \infty,
 \nonumber
\end{eqnarray}
where $\rho$ is the smallest integer number such that $\rho>\frac{-{\rm log}(\rho_{\vec A})}{{\rm log}(2)}$.
Estimating as (\ref{HAf3Leb}), we have
\begin{equation}\label{HAf3}
{H_{\Phi ,\vec A }}(\vec{f)} (x) \gtrsim \Big(\int\limits_{U} {\frac{{\Phi (t)}}{{{{\left| t \right|}^n}}}} \prod\limits_{i = 1}^m {{{\big\| A_i^{-1}(t)\big\|}^{\alpha_i(0)+\frac{n}{q_i}+\gamma_i+ \varepsilon }}} dt\Big)|x|^{-\alpha(0)- \frac{{n }}{{{q}}}-\gamma - m\varepsilon}\chi_{\mathbb R^n\setminus B(0,\varepsilon^{-1})}(x).
\end{equation}
Let $k_0$ be the smallest integer number such that $2^{k_0-1}\geq \varepsilon^{-1}$. Using Proposition 3.8 in \cite{Almeida2012} again, $\alpha_i(0)=\alpha_{i\infty}$ and (\ref{HAf3}), we obtain 
\begin{eqnarray}\label{HAf4}
\big\| {{H_{\Phi ,\vec A }}\big( {\vec f } \big)} \big\|_{{\mathop K\limits^.}_{q, {\omega}}^{{\alpha(\cdot)}, p}}^p&\gtrsim& \sum\limits_{k = {k_0}}^\infty  {{2^{k\alpha(0) p}}} {\Big( {\int\limits_{{2^{k - 1}} < \left| x \right| \leq {2^k}} {{{\left| x \right|}^{ - \varepsilon mq - \alpha(0)q - n}}} }dx \Big)^{\frac{p}{q}}}\times
\nonumber
\\
&&\,\,\times{\Big( {\int\limits_U {\frac{{\Phi (t)}}{{{{\left| t \right|}^n}}}\prod\limits_{i = 1}^m {{{\left\| {{A_i}^{ - 1}(t)} \right\|}^{{\alpha_i(0)} + \frac{{n}}{{{q_i}}}+\gamma_i + \varepsilon }}dt} } } \Big)^p}.
\end{eqnarray}
An elementary calculation leads that
\begin{equation}\label{Vanh}
\sum\limits_{k = {k_0}}^\infty  {{2^{k\alpha(0)p}}} {\Big( {\int\limits_{{2^{k - 1}} < \left| x \right| \leq {2^k}} {{{\left| x \right|}^{ - \varepsilon mq - \alpha(0)q - n}}} dx} \Big)^{\frac{p}{q}}}\gtrsim\Big( {\frac{{{2^{ - {k_0}\varepsilon mp}}}}{{1 - {2^{ - \varepsilon mp}}}}} \Big){\Big( {\frac{{{2^{q(\varepsilon m + \alpha(0))}} - 1}}{{q(\varepsilon m + \alpha(0))}}} \Big)^{\frac{p}{q}}}.
\end{equation}
For simplicity of notation, we write
\[
\vartheta^* \big(\varepsilon\big) = \dfrac{{\left( {\frac{{{2^{ - {k_0}\varepsilon mp}}}}{{1 - {2^{ - \varepsilon mp}}}}} \right)^{\frac{1}{p}}{{\left( {\frac{{{2^{q(\varepsilon m + \alpha(0))}} - 1}}{{q(\varepsilon m + \alpha(0))}}} \right)}^{\frac{1}{q}}}}}{{\prod\limits_{i = 1}^m {{{\left( {\frac{{{2^{\varepsilon {p_i} - \rho \varepsilon {p_i}}}}}{{{2^{\varepsilon {p_i}}} - 1}}} \right)}^{\frac{1}{{{p_i}}}}}{{\left( {\frac{{{2^{{q_i}({\alpha _i(0)} + \varepsilon )}} - 1}}{{{q_i}({\alpha _i(0)} + \varepsilon )}}} \right)}^{\frac{1}{{{p_i}}}}}} }}.
\]
Therefore, by (\ref{HAf4}) and (\ref{Vanh}), we estimate
\begin{eqnarray}\label{HAf5}
\big\| {{H_{\Phi ,\vec A }}\big( {\vec f } \big)} \big\|_{{\mathop K\limits^.}_{q,\omega}^{{\alpha(\cdot)}, p}}&\gtrsim& \varepsilon^{-m\varepsilon}\vartheta^*.\prod\limits_{i=1}^m\big\|f_i\big\|_{{\mathop K\limits^.}_{q_i,{\omega_i}}^{{\alpha_i(\cdot)}, p_i}}\times
\\
&&\times{\Big( {\int\limits_U {\frac{{\Phi (t)}}{{{{\left| t \right|}^n}}}\prod\limits_{i = 1}^m {{{\left\| {{A_i}^{ - 1}(t)} \right\|}^{{\alpha_i(0)} + \frac{n}{q_i}+\gamma_i}}\prod\limits_{i=1}^m\big\|A_i^{-1}(t)\big\|^\varepsilon\varepsilon^{m\varepsilon} dt} } } \Big)}.
\nonumber
\end{eqnarray}
By (\ref{lienhieppi}), it is easy to show that
\[
\mathop {\lim }\limits_{\varepsilon  \to {0^ + }} \varepsilon^{-m\varepsilon}\vartheta^*(\varepsilon) = a > 0.
\]
Thus, by (\ref{HAf5}), (\ref{prodLeb}) and the dominated convergence theorem of Lebesgue, we complete the proof for this case.
\vskip 5pt
Next, let us consider the case $\rm (b2)$. We now choose the functions $f_i$ for all $i=1,...,m$ as follows:
\[{f_i}(x) =
\begin{cases}
0,\,\,\;\;\;\;\;\,\,\,\,\,\,\,\,\,\,\,\,\,\,\,\,\,\,\,\,\,\,\,\,\,\,\,\,\,\,\,\,\,\,\,\,\,{\rm if }\,\, | x | < \rho_{\vec A}^{ - 1},&\\
  {|x|^{ - {\|\alpha _i\|_{L^{\infty}}} - \frac{n}{q_i(x)}-\gamma_i - \varepsilon }},\,\,{\rm otherwise.}&
\end{cases}
\]
Thus, we have
\[
F_{q_i}(f_i\omega_i.\chi_k)= \int\limits_{C^k}{|x|^{-(\|\alpha_i\|_{L^\infty}+\varepsilon)q_i(x)-n}dx}=\int\limits_{2^{k-1}}^{2^k}\int\limits_{S^{n-1}}{r^{-(\|\alpha_i\|_{L^\infty}+\varepsilon)q_i(r.x')-1}d\sigma(x')}dr.
\]
Hence, by letting $k\leq 0$, $F_{q_i}(f_i\omega_i.\chi_k)$ is controlled as follows
\begin{eqnarray}
\int\limits_{2^{k-1}}^{2^k}\int\limits_{S^{n-1}}{r^{-(\|\alpha_i\|_{L^\infty}+\varepsilon)q_{i+}-1}d\sigma(x')}dr \lesssim 2^{-k(\|\alpha_i\|+\varepsilon)q_{i+}}. \dfrac{2^{(\|\alpha_i\|_{L^{\infty}}+\varepsilon)q_{i+}}-1}{q_{i+}(\|\alpha_i\|_{L^\infty}+\varepsilon)}.
\nonumber
\end{eqnarray}
As a consequence of the above estimate, by (\ref{maxminvar}), we get 
\begin{equation}\label{fichikVarHerz}
\big\|f_i\chi_k\big\|_{L^{q_i(\cdot)}_{\omega_i}}\lesssim (\eta_{j+})^{\frac{1}{q_{i-}}}2^{-k(\|\alpha_i\|_{L^\infty}+\varepsilon)\frac{q_{i+}}{q_{i-}}},
\end{equation}
where $\eta_{j+}=\dfrac{2^{(\|\alpha_i\|_{L^{\infty}}+\varepsilon)q_{i+}}-1}{q_{i+}(\|\alpha_i\|_{L^\infty}+\varepsilon)}$. Otherwise, by the similar argument as above, we also obtain
\begin{equation}\label{fichikVarHerz1}
\big\|f_i\chi_k\big\|_{L^{q_i(\cdot)}_{\omega_i}}\lesssim (\eta_{j-})^{\frac{1}{q_{i-}}}2^{-k(\|\alpha_i\|+\varepsilon)\frac{q_{i-}}{q_{i+}}},
\end{equation}
where $\eta_{j-}=\dfrac{2^{(\|\alpha_i\|_{L^{\infty}}+\varepsilon)q_{i-}}-1}{q_{i-}(\|\alpha_i\|_{L^\infty}+\varepsilon)}$. From defining $\rho$ and assuming $\alpha_i(0)=\alpha_{i\infty}$, by Proposition 3.8 in \cite{Almeida2012} again, we get
\begin{eqnarray}\label{fiVarHerz}
\big\| f_i\big\|_{{\mathop K\limits^.}_{q_i(\cdot),\omega_i}^{{\alpha _i(\cdot)}, p_i}} &\leq & {\left\{ {\sum\limits_{k =\rho}^0 {{2^{k{\alpha _i(0)}{p_i}}}} \left\| {{f_i}{\chi _k}} \right\|_{L^{q_{i}(\cdot)}_{\omega_i}}^{{p_i}}} \right\}^{\frac{1}{{p_i}}}}
\\
&&\,\,\,\,\,\,\,\,\,\,\,\,\,+{\left\{ {\sum\limits_{k =1}^\infty  {{2^{k{\alpha _i(0)}{p_i}}}} \left\| {{f_i}{\chi _k}} \right\|_{L^{q_{i}(\cdot)}_{\omega_i}}^{{p_i}}} \right\}^{\frac{1}{{p_i}}}}.
\nonumber
\nonumber
\end{eqnarray}
Notice that, from assuming in this case, we deduce
\[
\alpha_i(0)-(\|\alpha_i\|_{L^\infty}+\varepsilon)\dfrac{q_{i-}}{q_{i+}}<0,\,\textit{\rm for all}\,\,\varepsilon\in \mathbb R^+.
\]
Thus, by (\ref{fichikVarHerz})-(\ref{fiVarHerz}), $\big\| f_i\big\|_{{\mathop K\limits^.}_{q_i(\cdot),\omega_i}^{{\alpha _i(\cdot)}, p_i}}$ is dominated by
\[
\eta_{j+}^{\frac{1}{q_{i-}}}\Big(\dfrac{2^{(-\rho+1)p_i(-\alpha_i(0)+(\|\alpha_i\|_{L^\infty}+\varepsilon)\frac{q_{i+}}{q_{i-}}}-1}{2^{p_i(-\alpha_i(0)+(\|\alpha_i\|_{L^\infty}+\varepsilon)\frac{q_{i+}}{q_{i-}})}-1}\Big)^{\frac{1}{p_i}}+ \eta_{j-}^{\frac{1}{q_{i-}}}\Big(\dfrac{2^{p_i(\alpha_i(0)-(\|\alpha_i\|_{L^\infty}+\varepsilon)\frac{q_{i-}}{q_{i+}})}}{1-2^{p_i(\alpha_i(0)-(\|\alpha_i\|_{L^\infty}+\varepsilon)\frac{q_{i-}}{q_{i+}})}}\Big)^{\frac{1}{p_i}}.
\]
This implies that
\begin{equation}\label{bdtfiVarHez}
\big\| f_i\big\|_{{\mathop K\limits^.}_{q_i(\cdot),\omega_i}^{{\alpha _i(\cdot)}, p_i}}\lesssim
\dfrac{I_i(\varepsilon)}{\Big(1-2^{p_i(\alpha_i(0)-(\|\alpha_i\|_{L^\infty}+\varepsilon)\frac{q_{i-}}{q_{i+}})}\Big)^{\frac{1}{p_i}}},
\end{equation}
where
\begin{eqnarray}
I_i(\varepsilon)&=& \eta_{j+}^{\frac{1}{q_{i-}}}\Big(\dfrac{2^{(-\rho+1)p_i(-\alpha_i(0)+(\|\alpha_i\|_{L^\infty}+\varepsilon)\frac{q_{i+}}{q_{i-}}}-1}{2^{p_i(-\alpha_i(0)+(\|\alpha_i\|_{L^\infty}+\varepsilon)\frac{q_{i+}}{q_{i-}})}-1}\Big)^{\frac{1}{p_i}}.\Big(1-2^{p_i(\alpha_i(0)-(\|\alpha_i\|_{L^\infty}+\varepsilon)\frac{q_{i-}}{q_{i+}})}\Big)^{\frac{1}{p_i}}
\nonumber
\\
&&+\,\eta_{j-}^{\frac{1}{q_{i-}}}{2^{\alpha_i(0)-(\|\alpha_i\|_{L^\infty}+\varepsilon)\frac{q_{i-}}{q_{i+}}}}.
\nonumber
\end{eqnarray}
On the other hand, by the similar estimating as (\ref{HAf3Leb}), we also obtain
\[
H_{\Phi,\vec A}(\vec f)(x)\geq \Big(\int\limits_{U} {\frac{{\Phi (t)}}{{{{\left| t \right|}^n}}}} \prod\limits_{i = 1}^m {{{| {{A_i}(t)x}|}^{ - {\|\alpha _i\|_{L^\infty}} - \frac{n}{q_i(x)}-\gamma_i - \varepsilon }}} dt\Big)\chi_{\mathbb R^n\setminus B(0,\varepsilon^{-1})}(x).
\]
For convenience, we put
\[
\Gamma^*_\varepsilon= \int\limits_{U}{\dfrac{\Phi(t)}{|t|^n}\prod\limits_{i=1}^{m}\min\Big\{\big\|A_i^{-1}(t)\big\|^{\frac{n}{q_{i+}}+\gamma_i}, \big\|A_i^{-1}(t)\big\|^{\frac{n}{q_{i-}}+\gamma_i}\Big\}\big\|A_i^{-1}(t)\big\|^{\|\alpha_i\|_{L^{\infty}}+\varepsilon}}dt.
\]
From this, by (\ref{DK1.2}), it is not hard to see that
\begin{equation}
H_{\Phi,\vec A}(\vec f)(x)\geq \Gamma^*_\varepsilon.|x|^{-(\sum\limits_{i=1}^{m}\|\alpha_i\|_{L^\infty})-\frac{n}{q(x)}-\gamma-m\varepsilon}\chi_{\mathbb R^n\setminus B(0,\varepsilon^{-1})}=: \Gamma^*_\varepsilon.g(x),
\nonumber
\end{equation}
where we denote $g(x)=|x|^{-(\sum\limits_{i=1}^{m}\|\alpha_i\|_{L^\infty})-\frac{n}{q(x)}-\gamma-m\varepsilon}\chi_{\mathbb R^n\setminus B(0,\varepsilon^{-1})}$.
\\
Since $\alpha(0)=\alpha_{\infty}$, we deduce that
\begin{equation}\label{HfVarHez}
\big\|H_{\Phi,\vec A}(\vec f)\big\|_{{\mathop K\limits^.}_{q(\cdot),\omega}^{{\alpha(\cdot)}, p}}\geq \Gamma^*_\varepsilon.\Big(\sum\limits_{k=k_0}^{\infty}2^{k\alpha(0)p}\big\|g\chi_k\big\|_{L^{q(\cdot)}_{\omega}}^p\Big)^{\frac{1}{p}},
\end{equation}
by using Proposition 3.8 in \cite{Almeida2012} again. Here we recall that $k_0$ is the smallest integer number so that $2^{k_0-1}\geq \varepsilon^{-1}$. 
Let us now show that
\begin{equation}\label{gvar}
\Big(\sum\limits_{k=k_0}^{\infty}2^{k\alpha(0)p}\big\|g\chi_k\big\|_{L^{q(\cdot)}_{\omega}}^p\Big)^{\frac{1}{p}}\gtrsim \eta_{+}^{\frac{1}{q_+}}\dfrac{2^{k_0(\alpha(0)-(\sum\limits_{i=1}^{m}\|\alpha_i\|_{L^\infty}+\varepsilon m)\frac{q_+}{q_-})}}{\Big(1-2^{\alpha(0)-(\sum\limits_{i=1}^{m}\|\alpha_i\|_{L^\infty}+\varepsilon m)\frac{q_+}{q_-}}\Big)^{\frac{1}{p}}},
\end{equation}
where $\eta_{+}=\dfrac{2^{(\sum\limits_{i=1}^{m}\|\alpha_i\|_{L^\infty}+\varepsilon m)q_+}-1}{(\sum\limits_{i=1}^{m}\|\alpha_i\|_{L^\infty}+\varepsilon m)q_+}.$ Indeed, by $k\geq k_0>1$, we get
\begin{eqnarray}
F_{q}(g\omega.\chi_k)&=& \int\limits_{C^k}{|x|^{-(\sum\limits_{i=1}^m\|\alpha_i\|_{L^\infty}+m\varepsilon)q(x)-n}dx}\nonumber
\\
&\geq&\int\limits_{2^{k-1}}^{2^k}\int\limits_{S^{n-1}}{r^{-(\sum\limits_{i=1}^m\|\alpha_i\|_{L^\infty}+m\varepsilon)q_+-1}d\sigma(x')}dr \gtrsim \eta_+.2^{-k(\sum\limits_{i=1}^m\|\alpha_i\|_{L^\infty}+m\varepsilon)q_+}.
\nonumber
\end{eqnarray}
Thus, by (\ref{maxminvar}), we have $\big\|g\chi_k\big\|_{L^{q(\cdot)}_{\omega}}\gtrsim \eta_+^{\frac{1}{q_+}}.2^{-k(\sum\limits_{i=1}^m\|\alpha_i\|_{L^\infty}+m\varepsilon)\frac{q_+}{q_-}}.$ This finishes the proof of the estimation (\ref{gvar}).

Now, we define 
\[
\vartheta^{**}(\varepsilon)=\dfrac{\eta_{+}^{\frac{1}{q_+}}2^{k_0(\alpha(0)-(\sum\limits_{i=1}^{m}\|\alpha_i\|_{L^\infty}+\varepsilon m)\frac{q_+}{q_-})}\prod\limits_{i=1}^m \Big(1-2^{p_i(\alpha_i(0)-(\|\alpha_i\|_{L^\infty}+\varepsilon)\frac{q_{i-}}{q_{i+}})}\Big)^{\frac{1}{p_i}}}{\prod\limits_{i=1}^m I_i(\varepsilon).\Big(1-2^{\alpha(0)-(\sum\limits_{i=1}^{m}\|\alpha_i\|_{L^\infty}+\varepsilon m)\frac{q_+}{q_-}}\Big)^{\frac{1}{p}}}.
\]
By (\ref{bdtfiVarHez})-(\ref{gvar}), we estimate 
\begin{eqnarray}\label{bdtHfVarHerz}
&&\big\|H_{\Phi,\vec A}(\vec f)\big\|_{{\mathop K\limits^.}_{q(\cdot),\omega}^{{\alpha(\cdot)}, p}}\nonumber
\\
&&\gtrsim\varepsilon^{-m\varepsilon}\vartheta^{**}.\Big(\int\limits_{U}{\dfrac{\Phi(t)}{|t|^n}\prod\limits_{i=1}^{m}\min\Big\{\big\|A_i^{-1}(t)\big\|^{\frac{n}{q_{i+}}+\gamma_i}, \big\|A_i^{-1}(t)\big\|^{\frac{n}{q_{i-}}+\gamma_i}\Big\}}\big\|A_i^{-1}(t)\big\|^{\|\alpha_i\|_{L^{\infty}}}\nonumber
\\
&&\,\,\,\,\,\,\,\,\,\,\,\,\,\,\,\,\,\,\,\,\,\,\,\,\,\,
\,\,\,\,\,\,\,\,\,\,\,\,\,\,\,\times\prod\limits_{i=1}^m\big\|A_i^{-1}(t)\big\|^{\varepsilon}\varepsilon^{m\varepsilon}dt\Big).\prod\limits_{i=1}^m \big\|f_i\big\|_{{\mathop K\limits^.}_{q_i(\cdot),\omega_i}^{{\alpha_i(\cdot)}, p_i}}.
\end{eqnarray}
Because of assuming $\alpha_i(0)<\|\alpha_i\|_{L^\infty}\frac{q_{i-}}{q_{i+}}$, we have $\alpha (0)<\sum\limits_{i=1}^m\big\|\alpha_i\big\|_{L^\infty}.$ From this, the limit of function $\varepsilon^{-m\varepsilon}\vartheta^{**}$ is a positive number when $\varepsilon$ tends to zero.
Therefore, by (\ref{prodLeb}), (\ref{bdtHfVarHerz}) and the dominated convergence theorem of  Lebesgue, we obtain
\[
\big\|H_{\Phi,\vec A}(\vec f)\big\|_{{\mathop K\limits^.}_{q(\cdot),\omega}^{{\alpha(\cdot)}, p}}
\gtrsim \mathcal C_6^*.\prod\limits_{i=1}^m \big\|f_i\big\|_{{\mathop K\limits^.}_{q_i(\cdot),\omega_i}^{{\alpha_i(\cdot)}, p_i}},
\]
which ends the proof for this case.
\end{proof}
When all of $\alpha_1(\cdot)$,...,$\,\alpha_m(\cdot)$ and $q_1(\cdot)$,...,$\,q_m(\cdot)$ are constant, we obtain the following useful result which is seen as an extension of Theorem 3.1 and Theorem 3.2 in the work \cite{CHH2016} to the case of matrices having property (\ref{DK1}) as mentioned above.
\begin{theorem}\label{TheoremMorreyHerz}
 Let $\omega(x)= |x|^\gamma$, $\gamma_1, ..., \gamma_m\in\mathbb R$, $\lambda_1,...,\lambda_m \in\mathbb R^+$, $\alpha_1,...,\alpha_m\in\mathbb R$, $1\leq q_i, q<\infty$, $0< p_i, p<\infty$ and $\omega_i(x)=|x|^{\gamma_i}$ for all $i= 1,...,m$. Simultaneously, let
\begin{equation}\label{gammq}
\frac{\gamma}{q}=\frac{\gamma_1}{q_1}+\cdots+\frac{\gamma_m}{q_m}.
\end{equation}
Then $H_{\Phi,\vec{A}}$ is a bounded operator from ${M\mathop{K}\limits^.}_{{p_1},{q_1}}^{{\alpha _1},{\lambda _1}}({\omega_1}) \times \cdots \times {M\mathop{K}\limits^.}_{{p_m},{q_m}}^{{\alpha _m},{\lambda _m}}({\omega_m})$ to ${M\mathop{K}\limits^.}_{{p},{q}}^{{\alpha},{\lambda}}({\omega})$ if and only if  
\[
\mathcal C_7=\int\limits_{{\mathbb R^n}} {\frac{{\Phi (t)}}{{{{\left| t \right|}^n}}}\prod\limits_{i = 1}^m {{{\left\| {A_i^{ - 1}(t)} \right\|}^{ - {\lambda _i} + {\alpha _i} + \frac{{n+{\gamma _i}}}{{{q_i}}}}}} } dt <  + \infty, 
\]
Moreover,
\[
{\big\| {{H_{\Phi ,\vec A }}} \big\|_{{M\mathop{K}\limits^.}_{{p_1},{q_1}}^{{\alpha _1},{\lambda _1}}({\omega_1}) \times \cdots \times {M\mathop{K}\limits^.}_{{p_m},{q_m}}^{{\alpha _m},{\lambda _m}}({\omega_m})\to {M\mathop{K}\limits^.}_{{p},{q}}^{{\alpha},{\lambda}}({\omega})}} \simeq \mathcal C_7.
\]
\end{theorem}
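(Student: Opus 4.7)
The plan is to establish the equivalence by specializing the arguments developed in Theorems \ref{TheoremVar-MorreyHerz} and \ref{TheoremVarMorreyHerz1} to constant exponents, which removes all log-H\"{o}lder and $q_{i\pm}$-gap contributions and makes both directions quantitatively sharp.

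For the sufficiency direction, I would first apply Minkowski's integral inequality to pull $\int \Phi(t)/|t|^n\,dt$ outside the $L^q(\omega)$-norm of $H_{\Phi,\vec A}(\vec f)\chi_k$, and then use H\"{o}lder's inequality with $1/q=\sum 1/q_i$; its weighted form is valid because (\ref{gammq}) is exactly the statement $\omega=\prod_i \omega_i^{q/q_i}$. Changing variables $y=A_i(t)x$ in each factor $\|f_i(A_i(t)\cdot)\chi_k\|_{L^{q_i}(\omega_i)}$ produces a Jacobian $|\det A_i^{-1}(t)|^{1/q_i}$ and a transformed weight $|A_i^{-1}(t)y|^{\gamma_i}$; by (\ref{detA}), (\ref{DK1.1}) and (\ref{DK1.2}), both are controlled, uniformly in the sign of $\gamma_i$, by the single factor $\|A_i^{-1}(t)\|^{(n+\gamma_i)/q_i}$. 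The inclusion (\ref{AiCk}) puts $A_i(t)C_k$ inside a bounded (and $k$-independent) union of dyadic annuli $C_{k+\ell_i+r}$. The constant-exponent analogue of Lemma \ref{lemmaVE} then gives $\|f_i\chi_{k+\ell_i+r}\|_{L^{q_i}(\omega_i)}\lesssim 2^{(k+\ell_i+r)(\lambda_i-\alpha_i)}\|f_i\|_{{M\mathop{K}\limits^.}_{p_i,q_i}^{\alpha_i,\lambda_i}(\omega_i)}$, and using $2^{\ell_i}\asymp \|A_i(t)\|\asymp \|A_i^{-1}(t)\|^{-1}$ (via (\ref{DK1})) rewrites $2^{\ell_i(\lambda_i-\alpha_i)}$ as $\|A_i^{-1}(t)\|^{\alpha_i-\lambda_i}$. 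Collecting the powers yields precisely the kernel $\prod_i \|A_i^{-1}(t)\|^{-\lambda_i+\alpha_i+(n+\gamma_i)/q_i}$ inside the $t$-integral, while the remaining geometric sum $\sum_{k\leq k_0}2^{k\lambda p}$ is comparable to $2^{k_0\lambda p}$ (finite by $\lambda_i>0$) and cancels the $2^{-k_0\lambda}$ prefactor, giving $\|H_{\Phi,\vec A}(\vec f)\|_{{M\mathop{K}\limits^.}_{p,q}^{\alpha,\lambda}(\omega)}\lesssim \mathcal C_7\prod_i \|f_i\|_{{M\mathop{K}\limits^.}_{p_i,q_i}^{\alpha_i,\lambda_i}(\omega_i)}$.

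For the necessity direction, I would test on the power functions $f_i(x)=|x|^{-\alpha_i-(n+\gamma_i)/q_i+\lambda_i}$. A direct dyadic annulus computation gives $\|f_i\chi_k\|_{L^{q_i}(\omega_i)}^{q_i}\asymp 2^{kq_i(\lambda_i-\alpha_i)}$, so that $\|f_i\|_{{M\mathop{K}\limits^.}_{p_i,q_i}^{\alpha_i,\lambda_i}(\omega_i)}$ is a finite positive constant (the geometric sum $\sum_{k\leq k_0}2^{k\lambda_i p_i}$ is comparable to $2^{k_0\lambda_i p_i}$, canceling the $2^{-k_0\lambda_i}$ prefactor). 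Inserting these $f_i$ into $H_{\Phi,\vec A}$ and applying (\ref{DK1.2}) pointwise yields
\begin{equation*}
H_{\Phi,\vec A}(\vec f)(x)\gtrsim \mathcal C_7\cdot |x|^{-\alpha-(n+\gamma)/q+\lambda},
\end{equation*}
where the compound exponent on $|x|$ is produced using (\ref{gammq}) together with the definitions of $\alpha,\lambda,q$. The same annulus computation shows the right-hand function has ${M\mathop{K}\limits^.}_{p,q}^{\alpha,\lambda}(\omega)$-norm comparable to a positive constant, so the boundedness hypothesis forces $\mathcal C_7<\infty$ and the matching lower bound $\|H_{\Phi,\vec A}\|\gtrsim \mathcal C_7$ on the operator norm.

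The key technical ingredients are the weight-compatibility relation (\ref{gammq}), which makes the weighted H\"{o}lder step and the exponent arithmetic in the test-function computation work out exactly, and the uniform two-sided control $|A_i^{-1}(t)y|^{\gamma_i}\asymp \|A_i^{-1}(t)\|^{\gamma_i}|y|^{\gamma_i}$ provided by (\ref{DK1}), (\ref{DK1.1}) and (\ref{DK1.2}) regardless of the sign of $\gamma_i$. Because the exponents are constant, neither log-H\"{o}lder corrections nor the $q_{i\pm}$ gaps of Theorem \ref{TheoremVarMorreyHerz1} enter, so the upper and lower bounds coincide up to $\simeq$-constants and yield $\|H_{\Phi,\vec A}\|\simeq \mathcal C_7$. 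The main obstacle, more bookkeeping than genuine analysis, is making sure that the $\rho_{\vec A}$-dependent constants coming from $A_i(t)C_k$ landing in a small union of annuli and from the weight transformation do not interfere with the exact match of the kernel factor $\|A_i^{-1}(t)\|^{-\lambda_i+\alpha_i+(n+\gamma_i)/q_i}$ between the two directions.
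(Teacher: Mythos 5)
Your proposal is correct and follows essentially the same route as the paper: the paper obtains this theorem by directly citing part (a) and case (b1) of Theorem \ref{TheoremVarMorreyHerz1} (whose proof uses exactly your Minkowski--H\"{o}lder--change-of-variables chain for the upper bound and exactly your power test functions $f_i(x)=|x|^{-\alpha_i-(n+\gamma_i)/q_i+\lambda_i}$ for the lower bound), observing that with constant exponents $\mathcal C_5=\mathcal C_5^*=\mathcal C_7$ and invoking the identification (\ref{rel-func}) of the weighted spaces. You simply unroll that specialization rather than quoting it, and your exponent bookkeeping, including the role of (\ref{gammq}) in collapsing $\sum_i(n+\gamma_i)/q_i$ to $(n+\gamma)/q$, checks out.
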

\begin{proof}
It is clear to see that the resuts of Theorem \ref{TheoremMorreyHerz} can be viewed as consequence of Theorem \ref{TheoremVarMorreyHerz1}. Indeed, we put $\gamma^*=\frac{\gamma}{q}$,$\gamma^*_i=\frac{\gamma_i}{q_i}$ for $i=1,...,m$ and $\omega^*=|x|^{\gamma^*}, \omega^*_i=|x|^{\gamma^*_i}$ for $i=1,...,m$. By having (\ref{gammq}) and assuming that $\alpha_1(\cdot)$,...,$\,\alpha_m(\cdot)$ and $q_1(\cdot)$,...,$\,q_m(\cdot)$ are constant, we have
\[
\mathcal C_5= \mathcal C_5^*= \int\limits_{{\mathbb R^n}} {\frac{{\Phi (t)}}{{{{\left| t \right|}^n}}}\prod\limits_{i = 1}^m {{{\left\| {A_i^{ - 1}(t)} \right\|}^{ - {\lambda _i} + {\alpha _i} + \frac{n}{q_i}+\gamma^*_i}}} }dt=\mathcal C_7.
\]
Thus, combining the case $\rm (a)$ and case $\rm (b1)$ of Theore \ref{TheoremVarMorreyHerz1}, we deduce
\[
{\big\| {{H_{\Phi ,\vec A }}} \big\|_{{M\mathop{K}\limits^.}_{{p_1},{q_1},\omega^*_1}^{{\alpha _1},{\lambda _1}}\times \cdots \times {M\mathop{K}\limits^.}_{{p_m},{q_m},{\omega^*_m}}^{{\alpha _m},{\lambda _m}}\to {M\mathop{K}\limits^.}_{{p},{q},{\omega}^*}^{{\alpha},{\lambda}}}} \simeq \mathcal C_7.
\]
At this point, by relation (\ref{rel-func}), we immediately get the desired result.
\end{proof}
As a consequence of Theorem \ref{TheoremVarHerz1}, we also obtain the analogous result for the
constant parameters case as follows.
\begin{theorem}\label{Herz}
Let $1\leq p, p_1,...,p_m <\infty$, the assumptions of Theorem \ref{TheoremMorreyHerz} and the hypothesis (\ref{lienhieppi}) in Theorem \ref{TheoremVarHerz} hold. We have that
 $H_{\Phi,\vec{A}}$ is a bounded operator from ${\mathop K\limits^.}_{{q_1}}^{ {\alpha _1},{p_1}}({\omega_1}) \times \cdots \times {\mathop K\limits^.}_{{q_m}}^{ {\alpha _m},{p_m}}({\omega_m})$ to ${\mathop K\limits^.}_q^{ \alpha ,p}(\omega)$ if and only if 
\[
\mathcal C_8= \int\limits_{{\mathbb R^n}} {\frac{{\Phi (t)}}{{{{\left| t \right|}^n}}}\prod\limits_{i = 1}^m {{{\left\| {A_i^{ - 1}(t)} \right\|}^{{\alpha _i} + \frac{{n+{\gamma _i}}}{{{q_i}}}}}} } dt <  + \infty.
\]
Furthermore,
\[
\big\| {{H_{\Phi ,\vec A }}} \big\|_{{\mathop K\limits^.}_{{q_1}}^{ {\alpha _1},{p_1}}({\omega_1}) \times \cdots \times {\mathop K\limits^.}_{{q_m}}^{ {\alpha _m},{p_m}}({\omega_m}) \to {\mathop K\limits^.}_q^{ \alpha ,p}(\omega)}\simeq \mathcal C_8.
\]
\end{theorem}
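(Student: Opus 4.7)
The plan is to derive Theorem \ref{Herz} as a direct consequence of Theorem \ref{TheoremVarHerz1}, exactly in the spirit of how Theorem \ref{TheoremMorreyHerz} was reduced to Theorem \ref{TheoremVarMorreyHerz1}. Since all of $\alpha_1,\ldots,\alpha_m$ and $q_1(\cdot),\ldots,q_m(\cdot)$ are now constant, we automatically have $q_{i+}=q_{i-}=q_i$ for every $i$, which means condition \textup{(b1)} in Theorem \ref{TheoremVarHerz1} is met; similarly, $\alpha_i\in \mathbf C_0^{\log}(\mathbb R^n)\cap \mathbf C_\infty^{\log}(\mathbb R^n)$ trivially (with $\alpha_i(0)=\alpha_{i\infty}=\alpha_i$), and the log-H\"{o}lder constants vanish, so no additional constraint is introduced.

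First, I would rescale the weights in the standard way: set $\gamma^*=\gamma/q$, $\gamma^*_i=\gamma_i/q_i$ for $i=1,\ldots,m$, and define $\omega^*(x)=|x|^{\gamma^*}$, $\omega^*_i(x)=|x|^{\gamma^*_i}$. The hypothesis $\gamma/q=\gamma_1/q_1+\cdots+\gamma_m/q_m$ guarantees the compatibility $\gamma^*=\gamma^*_1+\cdots+\gamma^*_m$ needed to form the target space. Using the identifications recorded in (\ref{rel-func}), namely ${\mathop K\limits^.}^{\alpha_i,p_i}_{q_i,(\omega^*_i)^{1/q_i}}={\mathop K\limits^.}^{\alpha_i,p_i}_{q_i}(\omega_i)$ with the corresponding power weights, the product space on the left and the target space on the right of the claim are precisely the variable-exponent Herz spaces appearing in Theorem \ref{TheoremVarHerz1}, once the dummy weight has been replaced by its normalized counterpart. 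Note that under condition \textup{(b1)}, by direct substitution of constant $q_i$, $\alpha_i$ together with $\alpha_i(0)=\alpha_{i\infty}=\alpha_i$, both $\mathcal C_6$ and $\mathcal C_6^*$ collapse to the same quantity
\[
\int_{\mathbb R^n}\frac{\Phi(t)}{|t|^n}\prod_{i=1}^m \|A_i^{-1}(t)\|^{\alpha_i+\frac{n+\gamma_i}{q_i}}\,dt=\mathcal C_8,
\]
because the embedding constants $\|1\|_{L^{r_{1i}(t,\cdot)}}$ also trivialize (the exponents $q_i(A_i^{-1}(t)\cdot)$ and $q_i(\cdot)$ coincide).

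For the sufficiency, I would invoke part \textup{(a)} of Theorem \ref{TheoremVarHerz1}: assuming $\mathcal C_8<\infty$, the operator $H_{\Phi,\vec A}$ is bounded from ${\mathop K\limits^.}^{\alpha_1,p_1}_{q_1,(\omega^*_1)^{1/q_1}}\times\cdots\times{\mathop K\limits^.}^{\alpha_m,p_m}_{q_m,(\omega^*_m)^{1/q_m}}$ into ${\mathop K\limits^.}^{\alpha,p}_{q,(\omega^*)^{1/q}}$, with operator norm bounded by a constant multiple of $\mathcal C_8$; translating back via (\ref{rel-func}) yields the required bound with the power weights of the theorem. For the necessity, I would apply part \textup{(b)} of Theorem \ref{TheoremVarHerz1} under condition \textup{(b1)}: boundedness of $H_{\Phi,\vec A}$ forces $\mathcal C_6^*=\mathcal C_8$ to be finite, and the operator norm is at least a constant multiple of $\mathcal C_8$.

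The main subtlety, and essentially the only thing to check carefully, is the bookkeeping of exponents when passing from the power weights $\omega_i(x)=|x|^{\gamma_i}$ used in the statement of Theorem \ref{Herz} to the weights $(\omega^*_i)^{1/q_i}$ appearing naturally in the variable-exponent framework; this requires verifying that the exponent $\alpha_i+\tfrac{n+\gamma_i}{q_i}$ produced by specializing $\mathcal C_6$ and $\mathcal C_6^*$ in Theorem \ref{TheoremVarHerz1} coincides with the exponent in $\mathcal C_8$, which is a straightforward calculation using $\gamma^*_i/q_i$ absorbed into the $\frac{n+\gamma_i}{q_i}$ term. Once this identification is in place, combining sufficiency and necessity gives the equivalence $\big\|H_{\Phi,\vec A}\big\|\simeq \mathcal C_8$ immediately, completing the proof.
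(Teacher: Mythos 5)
Your proposal is correct and follows essentially the same route as the paper: the paper's own proof likewise introduces $\gamma^*=\gamma/q$, $\gamma^*_i=\gamma_i/q_i$ with the corresponding weights, observes that $\mathcal C_6=\mathcal C_6^*=\mathcal C_8$ under constant exponents, and then invokes case (a) and case (b1) of Theorem \ref{TheoremVarHerz1} together with the relation (\ref{rel-func}). Your additional remarks on the trivialization of $\|1\|_{L^{r_{1i}(t,\cdot)}}$ and the exponent bookkeeping are accurate.
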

\begin{proof}
By putting $\gamma^*,\gamma^*_1,...,\gamma^*_m$ and $\omega^*,\omega^*_1,...,\omega^*_m$ above, it is not hard to see that $\mathcal C_6=\mathcal C_6^* =\mathcal C_8$. Therefore, by using case $\rm a$, case $\rm b1$ of Theorem \ref{TheoremVarHerz1} and the relation (\ref{rel-func}), we finish the proof of this theorem.
\end{proof}
Now, let us take measurable functions $s_1(t),..., s_m(t)\neq 0$ almost everywhere in $\mathbb R^n$. We consider a special case that the matrices $A_i(t)={\rm diag }[ s_{i1}(t),...,s_{in}(t) ]$ with $|s_{i1}|=\cdots=|s_{in}|=|s_i|$, for almost everywhere $t\in\mathbb R^n$, for all $i=1,...,m$. It is obvious that the matrices $A_i$'s satisfy the condition (\ref{DK1}). Therefore, since the Lebesgue space with power weights is a special case of the Herz space, we also obtain the following corollary.
\begin{corollary}\label{HquaHerz}
Let $1\leq p, p_1,...,p_m <\infty$, $\alpha_1,...,\alpha_m\in\mathbb R$, and the hypothesis (\ref{lienhieppi}) in Theorem \ref{TheoremVarHerz} is true. Then $H_{\Phi,\vec{A}}$ is a bounded operator from $L^{p_1}(|x|^{\alpha_1p_1}dx)\times \cdots \times L^{p_n}(|x|^{\alpha_np_n}dx)$ to $L^{p}(|x|^{\alpha p}dx)$ if and only if 
\[
\mathcal C_9= \int\limits_{{\mathbb R^n}} {\frac{{\Phi (t)}}{{{{\left| t \right|}^n}}}\prod\limits_{i = 1}^m {{{|s_i(t)|}^{{-\alpha _i} - \frac{n}{{p_i}}}}} } dt <  + \infty.
\]
Furthermore,
\[
\big\| {{H_{\Phi ,\vec A }}} \big\|_{L^{p_1}(|x|^{\alpha_1p_1}dx)\times \cdots \times L^{p_n}(|x|^{\alpha_np_n}dx) \to L^{p}(|x|^{\alpha p}dx)}= \mathcal C_9.
\]
\end{corollary}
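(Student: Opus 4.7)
The plan is to deduce Corollary \ref{HquaHerz} by a direct computation that exploits the special structure of the matrices, rather than by simply invoking Theorem \ref{Herz} (which would only give the qualitative equivalence $\simeq$, not the sharp equality $=$). The crucial geometric input is that $A_i(t)$ is a diagonal matrix whose entries have the common absolute value $|s_i(t)|$, so that $|A_i(t)x|=|s_i(t)||x|$ for every $x\in\mathbb R^n$ and $|\det A_i^{-1}(t)|=|s_i(t)|^{-n}$. This turns every change of variables $y=A_i(t)x$ into an exact dilation, eliminating the constants that normally arise from the matrix-norm inequality (\ref{DK1.2}).

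For the sufficiency and the upper bound, I would first apply Minkowski's integral inequality, as in (\ref{0HALq}), to pull the target norm inside the $t$-integral. Since $\alpha=\alpha_1+\cdots+\alpha_m$ and $1/p=1/p_1+\cdots+1/p_m$, a direct application of H\"older's inequality then distributes the weight $|x|^{\alpha}$ as $\prod_i|x|^{\alpha_i}$ across the factors, giving
\[
\Big\|\prod_{i=1}^m f_i(A_i(t)\cdot)\Big\|_{L^p(|x|^{\alpha p}dx)}\leq\prod_{i=1}^m\big\|f_i(A_i(t)\cdot)\big\|_{L^{p_i}(|x|^{\alpha_i p_i}dx)}.
\]
The substitution $y=A_i(t)x$ in each factor now gives the exact identity
\[
\big\|f_i(A_i(t)\cdot)\big\|_{L^{p_i}(|x|^{\alpha_i p_i}dx)}=|s_i(t)|^{-\alpha_i-n/p_i}\big\|f_i\big\|_{L^{p_i}(|x|^{\alpha_i p_i}dx)}
\]
with no residual constant, yielding $\|H_{\Phi,\vec A}(\vec f)\|_{L^p(|x|^{\alpha p}dx)}\leq \mathcal C_9\prod_{i=1}^m\|f_i\|_{L^{p_i}(|x|^{\alpha_i p_i}dx)}$.

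For necessity and the sharp lower bound, I would test against the family $f_{i,\varepsilon}(x)=|x|^{-\alpha_i-n/p_i-\varepsilon}\chi_{\{|x|\geq 1\}}(x)$, in the spirit of the test functions used in Theorem \ref{TheoremVarLebesgue1} and in the case $\mathrm{(b1)}$ of Theorem \ref{TheoremVarHerz1}. A direct calculation based again on the identity $|A_i(t)x|=|s_i(t)||x|$ gives, on a suitable large set,
\[
H_{\Phi,\vec A}(\vec f_\varepsilon)(x)=\Big(\int_{\mathbb R^n}\frac{\Phi(t)}{|t|^n}\prod_{i=1}^m|s_i(t)|^{-\alpha_i-n/p_i-\varepsilon}\,dt\Big)|x|^{-\alpha-n/p-m\varepsilon}.
\]
Forming the ratio of $\|H_{\Phi,\vec A}(\vec f_\varepsilon)\|_{L^p(|x|^{\alpha p}dx)}$ to $\prod_i\|f_{i,\varepsilon}\|_{L^{p_i}(|x|^{\alpha_i p_i}dx)}$, the $\varepsilon$-dependent polynomial prefactors cancel, and the dominated convergence theorem as $\varepsilon\to 0^+$ delivers both the finiteness of $\mathcal C_9$ and the sharp reverse inequality $\|H_{\Phi,\vec A}\|\geq \mathcal C_9$.

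The main obstacle I anticipate is bookkeeping: one must verify that the $\varepsilon$-dependent normalisations of $\|H_{\Phi,\vec A}(\vec f_\varepsilon)\|$ and of $\prod_i\|f_{i,\varepsilon}\|$ match precisely, so that no stray constant survives in the limit and the sharp equality (rather than mere equivalence) is recovered. This is exactly the pattern established at the end of the proof of Theorem \ref{TheoremVarHerz1}, case $\mathrm{(b1)}$, and is resolved by the same dominated-convergence argument once one uses the identity $|A_i(t)x|=|s_i(t)||x|$ in place of the one-sided bound (\ref{DK1.2}).
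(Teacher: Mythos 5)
Your route is genuinely more direct than the paper's. The paper disposes of this corollary in two lines: it specializes Theorem \ref{Herz} (via Remark 1, since $L^{p_i}(|x|^{\alpha_i p_i}dx)$ is a power-weighted Herz space) and observes that $|A_i(t)x|=|s_i(t)|\,|x|$, so that $\|A_i^{-1}(t)\|\simeq |s_i(t)|^{-1}$ and $\mathcal C_8\simeq\mathcal C_9$. Strictly speaking that argument only yields the equivalence $\|H_{\Phi,\vec A}\|\simeq\mathcal C_9$, so a self-contained computation like yours is exactly what is needed to justify the stated equality. Your upper bound is correct and already sharp: Minkowski's integral inequality, H\"older with $\sum_i 1/p_i=1/p$, and the exact change of variables $y=A_i(t)x$ give $\|H_{\Phi,\vec A}(\vec f)\|\le\mathcal C_9\prod_i\|f_i\|$ with no spurious constant.

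The gap is in the lower bound. With $f_{i,\varepsilon}(x)=|x|^{-\alpha_i-n/p_i-\varepsilon}\chi_{\{|x|\ge1\}}$ the prefactors do \emph{not} cancel: one computes $\|f_{i,\varepsilon}\|_{L^{p_i}(|x|^{\alpha_ip_i}dx)}=\bigl(|S^{n-1}|/(\varepsilon p_i)\bigr)^{1/p_i}$, while the $L^{p}(|x|^{\alpha p}dx)$ norm of $|x|^{-\alpha-n/p-m\varepsilon}\chi_{\{|x|\ge\varepsilon^{-1}\}}$ equals $\varepsilon^{m\varepsilon}\bigl(|S^{n-1}|/(m\varepsilon p)\bigr)^{1/p}$. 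The powers of $\varepsilon$ and of $|S^{n-1}|$ cancel, but the numerical factors leave the ratio $\prod_i p_i^{1/p_i}/(mp)^{1/p}$, which by Jensen's inequality is $\le 1$ with equality only when all the $p_i$ coincide. So as written your argument proves $\|H_{\Phi,\vec A}\|\ge c\,\mathcal C_9$ with $c<1$ in general --- enough for the ``if and only if'' and for $\simeq$, but not for the claimed equality. The reason is that your test functions fail to saturate the H\"older step of the upper bound, which requires the functions $|f_{i,\varepsilon}(x)|^{p_i}|x|^{\alpha_i p_i}$ to be mutually proportional. The standard repair (cf. \cite{FGLY2015}) is to perturb by $\varepsilon/p_i$ instead of $\varepsilon$, i.e.\ take $f_{i,\varepsilon}(x)=|x|^{-\alpha_i-(n+\varepsilon)/p_i}\chi_{\{|x|\ge1\}}$; then $\prod_i\|f_{i,\varepsilon}\|=(|S^{n-1}|/\varepsilon)^{1/p}$ matches the norm of the product exactly, and letting $\varepsilon\to0^+$ --- together with the restriction to the sets $U$ and $\{|x|\ge\varepsilon^{-1}\}$ needed to absorb the characteristic functions, exactly as in part (b) of Theorem \ref{TheoremVarLebesgue1} --- delivers the sharp bound $\|H_{\Phi,\vec A}\|\ge\mathcal C_9$.
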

\begin{proof}
 By the assumption of the matrices $A_i$'s, it is easy to see that
\[
|A_i(t)x|^{\alpha} = |s_i(t)|^\alpha.|x|^{\alpha},\,\textit{\rm for all}\,\, \alpha\in\mathbb R,\, i=1,...,m.
\]
Hence, we immedialately obtain the desired result.
\end{proof}
By the relation between the Hausdorff operators and the Hardy-Ces\`{a}ro operators as mentioned in Section 1, we see that Corollary \ref{HquaHerz} extends and strengthens the results of Theorem 3.1 in \cite{HK2015} with power weights.
\vskip 5pt
Let us now assume that $q(\cdot)$ and $q_i(\cdot)\in\mathcal P_\infty(\mathbb R^n)$, $\lambda,\alpha,\gamma ,\alpha_i, {\lambda _i},{\gamma_i}$ are real numbers such that $\lambda_i\in \big(\frac{-1}{q_{i\infty}},0\big)$, $\gamma_i \in (-n,\infty)$, $i=1,2,...,m$ and 
$$   \frac{1}{{{q_1}(\cdot)}} + \frac{1}{{{q_2}(\cdot)}} + \cdots + \frac{1}{{{q_m}(\cdot)}} = \frac{1}{q(\cdot)}, $$
  $$   \frac{{\gamma _1}}{{{q_{1\infty}}}} + \frac{\gamma _2}{{{q_{2\infty}}}} + \cdots + \frac{\gamma _m}{{{q_{m\infty}}}} = \frac{\gamma}{q_\infty},$$  
   $$  \frac{{n + {\gamma _1}}}{{n + \gamma}}{\lambda _1} + \frac{{n + {\gamma_2}}}{{n + \gamma }}{\lambda _2} +\cdots+ \frac{{n + {\gamma _m}}}{{n + \gamma}}{\lambda _m} = \lambda,$$  
   $$ \alpha_1+\cdots+\alpha_m=\alpha.$$
We are also interested in the multilinear Hausdorff operators on the product of weighted $\lambda$-central Morrey spaces with variable exponent. We have the following interesting result.
\begin{theorem}\label{TheoremMorreyVar}
Let $\omega_1(x)=|x|^{\gamma_1},...,\omega_m(x)=|x|^{\gamma_m}$, $\omega(x)= |x|^\gamma$ and $v_1(x)=|x|^{\alpha_1},...,v_m(x)=|x|^{\alpha_m}$, $v(x)= |x|^\alpha$. In addition, the hypothesis  (\ref{DKVarLeb}) in Theorem \ref{TheoremVarLebesgue1}
holds and  the following condition is true: 
\begin{equation}\label{DK2MVar}
\mathcal C_{10}= \int\limits_{\mathbb R^n} { \frac{{ {\Phi (t)}}}{{{{\left| t \right|}^n}}}\prod\limits_{i = 1}^m {{{\left\| {A_i(t)} \right\|}^{(n + {\gamma _i})\left(\frac{1}{q_{i\infty}}+{\lambda _i}\right)}}}}c_{A_i,q_i,\alpha_i}(t)\big\|1\big\|_{L^{r_{1i}(t,\cdot)}}dt <+ \infty,
\end{equation}
Then, we have $H_{\Phi ,\vec{A}}$ is bounded from ${\mathop B\limits^.}_{\omega_1,v_1}^{q_1(\cdot),\lambda _1}\times  \cdots\times {\mathop B\limits^.}_{{\omega_m,v_m}}^{{q_m(\cdot)},{\lambda _m}}$ to ${\mathop B\limits^.}_{\omega,v}^{q(\cdot),\lambda}$. 
\end{theorem}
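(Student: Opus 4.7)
The plan is to fix an arbitrary $R>0$, estimate $\|H_{\Phi,\vec A}(\vec f)\chi_{B(0,R)}\|_{L^{q(\cdot)}_v}$, and then divide by $\omega(B(0,R))^{\lambda + 1/q_\infty}$; the goal is to show this quotient is uniformly bounded in $R$ by $\mathcal C_{10}\prod_i \|f_i\|_{\dot B^{q_i(\cdot),\lambda_i}_{\omega_i,v_i}}$, so that taking the supremum yields the Morrey-norm bound. First, I will apply the Minkowski integral inequality for variable Lebesgue spaces (Corollary 2.38 in \cite{CUF2013}) to pull the norm inside the $t$-integral, and then the variable-exponent H\"older inequality (using $\sum_{i=1}^m 1/q_i(\cdot) = 1/q(\cdot)$ and $v = \prod_i v_i$) to split the product. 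This reduces the task to controlling $\|f_i(A_i(t)\cdot)\chi_{B(0,R)}\|_{L^{q_i(\cdot)}_{v_i}}$ for each $i$ and $t$.

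Second, for each $t\in\mathrm{supp}(\Phi)$ and each $i$ I will carry out the substitution $y=A_i(t)x$. Since $x\in B(0,R)$ implies $y\in B(0,\|A_i(t)\|R)$, and since $|A_i^{-1}(t)y|^{\alpha_i}\le \max\{\|A_i^{-1}(t)\|^{\alpha_i},\|A_i(t)\|^{-\alpha_i}\}|y|^{\alpha_i}$, the very same modular calculation that produced inequality (\ref{MVarfiBLeb}) in Theorem \ref{TheoremVarLebesgue} yields
\[
\bigl\|f_i(A_i(t)\cdot)\chi_{B(0,R)}\bigr\|_{L^{q_i(\cdot)}_{v_i}} \le c_{A_i,q_i,\alpha_i}(t)\,\bigl\|f_i\chi_{B(0,\|A_i(t)\|R)}\bigr\|_{L^{q_i(A_i^{-1}(t)\cdot)}_{v_i}}.
\]
Applying Theorem \ref{theoembed} together with hypothesis (\ref{DKVarLeb}) then converts the exponent $q_i(A_i^{-1}(t)\cdot)$ back to $q_i(\cdot)$ at the cost of the factor $\|1\|_{L^{r_{1i}(t,\cdot)}}$.

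Third, the definition of the variable-exponent $\lambda$-central Morrey space gives
\[
\bigl\|f_i\chi_{B(0,\|A_i(t)\|R)}\bigr\|_{L^{q_i(\cdot)}_{v_i}} \le \omega_i\bigl(B(0,\|A_i(t)\|R)\bigr)^{\lambda_i+1/q_{i\infty}}\|f_i\|_{\dot B^{q_i(\cdot),\lambda_i}_{\omega_i,v_i}}.
\]
Since $\gamma_i>-n$, one has $\omega_i(B(0,r))\simeq r^{n+\gamma_i}$, so the above prefactor equals a constant times $\|A_i(t)\|^{(n+\gamma_i)(\lambda_i+1/q_{i\infty})} R^{(n+\gamma_i)(\lambda_i+1/q_{i\infty})}$. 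Taking the product over $i$ and integrating in $t$ produces precisely the constant $\mathcal C_{10}$ together with $R$ raised to $\sum_{i=1}^m (n+\gamma_i)(\lambda_i+1/q_{i\infty})$.

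Finally, the prescribed identities $\sum_i 1/q_{i\infty}=1/q_\infty$, $\sum_i \gamma_i/q_{i\infty}=\gamma/q_\infty$ and $\sum_i (n+\gamma_i)\lambda_i=(n+\gamma)\lambda$ combine to give
\[
\sum_{i=1}^m (n+\gamma_i)\bigl(\lambda_i+1/q_{i\infty}\bigr)=(n+\gamma)\bigl(\lambda+1/q_\infty\bigr),
\]
which matches the exponent in $\omega(B(0,R))^{\lambda+1/q_\infty}\simeq R^{(n+\gamma)(\lambda+1/q_\infty)}$; hence the $R$-dependence cancels and the bound is uniform in $R>0$, giving the desired Morrey estimate upon taking the supremum. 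The main technical point is the modular change of variables inside the variable-exponent norm (step two), but this parallels the derivation of (\ref{MVarfiBLeb}) in the proof of Theorem \ref{TheoremVarLebesgue}; everything else is either a direct application of already-established variable-exponent inequalities or the bookkeeping that makes the dilation factors in $R$ cancel against the Morrey-space normalization.
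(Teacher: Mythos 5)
Your proposal is correct and follows essentially the same route as the paper's proof: Minkowski and H\"older for variable Lebesgue spaces, the modular change-of-variables estimate as in (\ref{MVarfiBLeb}) combined with Theorem \ref{theoembed} under hypothesis (\ref{DKVarLeb}), and then the exponent identities to cancel the $R$-dependence against the factors $\|A_i(t)\|^{(n+\gamma_i)(\lambda_i+1/q_{i\infty})}$, which is exactly how the paper balances $\omega(B(0,R))^{\lambda+1/q_\infty}$ against the $\omega_i(B(0,R\|A_i(t)\|))^{\lambda_i+1/q_{i\infty}}$. No gaps; your version merely makes the $\omega_i(B(0,r))\simeq r^{n+\gamma_i}$ bookkeeping explicit where the paper leaves it implicit.
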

\begin{proof}
For $R>0$, we denote
\[
 \Delta_R={\frac{1}{{{\omega}\big(B(0,R)\big)^{\frac{1}{q_\infty}+\lambda}}}} \big\|H_{\Phi ,\vec{A}}(\vec{f})\big\|_{L^{q(\cdot)}_{v}(B(0,R))}.
\]
It follows from using the Minkowski inequality for the variable Lebesgue space that
\begin{equation}\label{MVarDeltaR}
\Delta_R \lesssim \int\limits_{\mathbb R^n}{\frac{1}{{\omega(B(0,R))}^{\frac{1}{q_\infty} +\lambda}}.\frac{\Phi(t)}{|t|^n}\big\|\prod\limits_{i=1}^{m}{f_i(A_i(t).)}\big\|_{L^{q(\cdot)}_{v}(B(0,R))}dt}.
\end{equation}
On the other hand, we apply the H\"{o}lder inequality for the variable Lebesgue space to obtain
\begin{equation}\label{MVarHolder1}
\big\| \prod\limits_{i=1}^{m}f_i(A_i(t).)\big\|_{L^{q(\cdot)}_{v}(B(0,R))}\lesssim \prod\limits_{i=1}^{m}\big\|f_i(A_i(t).)\big\|_{L^{q_i(\cdot)}_{v_i}(B(0,R))}.
\end{equation}
By estimating as (\ref{MVarfiBLeb}) and (\ref{nhungVarLeb2}), we have
\begin{equation}\label{MVarfiB}
\big\|f_i(A_i(t).)\big\|_{L^{q_i(\cdot)}_{v_i}(B(0,R))}\lesssim c_{A_i,q_i,\alpha_i}(t).\big\|1 \big\|_{L^{r_{1i}(t,\cdot)}}.\big\|f_i\big\|_{L^{q_i(\cdot)}_{v_i}(B(0,R||A_i(t)||))}.
\end{equation}
In view of $\frac{{n + {\gamma _1}}}{{n + \gamma}}{\lambda _1} + \frac{{n + {\gamma_2}}}{{n + \gamma }}{\lambda _2} +\cdots+ \frac{{n + {\gamma _m}}}{{n + \gamma}}{\lambda _m} = \lambda$, we estimate
\[
\frac{1}{{\omega( B(0,R))}^{\frac{1}{q_{\infty}} + \lambda}} \lesssim \frac{\big\|A_i(t)\big\|^{(\gamma_i+n)(\frac{1}{q_{i\infty}}+\lambda_i)}}{{\omega_i( B(0,R\|A_i(t)\|))}^{\frac{1}{q_{i\infty}} + \lambda _i}}.
\]
Thus, by (\ref{MVarDeltaR}) and (\ref{MVarfiB}), it follows that
$\Delta_R\lesssim \mathcal C_{10}\prod\limits_{i = 1}^m {{{\left\| {{f_i}} \right\|}_{{\mathop B\limits^.}_{{\omega_i,v_i}}^{{q_i(\cdot)},{\lambda _i}}}}}.$ 
\\
Consequently, it is straightforward ${\left\| {{H_{\Phi ,\vec{A} }}(\vec{f} )} \right\|_{{\mathop B\limits^.}_{\omega,v}^{q(\cdot),\lambda }}}\lesssim \mathcal C_{10}\prod\limits_{i = 1}^m {{{\left\| {{f_i}} \right\|}_{{\mathop B\limits^.}_{{\omega_i,v_i}}^{{q_i(\cdot)},{\lambda _i}}}}}.$
\end{proof}
As a consequence of Theorem \ref{TheoremMorreyVar}, by the reason (\ref{detA}) and (\ref{DK1.1}), we also have the analogous result for the $q,q_1,...,q_m$-constant case as follows.
\begin{corollary}\label{CoroMorreyVar}
Let $\omega_i,v_i,\omega,v$ be as Theorem \ref{TheoremMorreyVar}. In addition, the following condition holds:
\begin{equation}\label{DK4MVarCoro}
\mathcal C_{11}= \int\limits_{\mathbb R^n} { \frac{{ {\Phi (t)}}}{{{{\left| t \right|}^n}}}\prod\limits_{i = 1}^m {{{\left\| {A_i^{ - 1}(t)} \right\|}^{\alpha_i-\frac{\gamma_i}{q_i} - \lambda_i(n + {\gamma _i})}}} } dt<+ \infty.
\end{equation}
Then, we have 
\[
{\left\| {{H_{\Phi ,\vec{A} }}(\vec{f} )} \right\|_{{\mathop B\limits^.}_{\omega,v}^{q,\lambda }}}\lesssim \mathcal C_{11}.\prod\limits_{i = 1}^m {{{\left\| {{f_i}} \right\|}_{{\mathop B\limits^.}_{{\omega_i,v_i}}^{{q_i},{\lambda_i}}}}}.
\]
\end{corollary}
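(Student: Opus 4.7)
The plan is to reduce the statement directly to Theorem \ref{TheoremMorreyVar} by specializing all exponents to constants and bounding the constant $\mathcal{C}_{10}$ that appears there by the constant $\mathcal{C}_{11}$ of the present corollary. First, since $q,q_1,\dots,q_m$ are constants, we have $q_{i+}=q_{i-}=q_i=q_{i\infty}$ for every $i$, and consequently $q_i(A_i^{-1}(t)\cdot)=q_i(\cdot)=q_i$; hence $1/r_{1i}(t,\cdot)=0$ a.e., so $\|1\|_{L^{r_{1i}(t,\cdot)}}$ contributes only a bounded factor (which is harmless), and the assumption (\ref{DKVarLeb}) is automatic. Moreover, the relation $\frac{\gamma_i}{q_{i\infty}}=\frac{\gamma_i}{q_i}$ reduces the sum condition $\sum_i\frac{\gamma_i}{q_{i\infty}}=\frac{\gamma}{q_\infty}$ to the natural compatibility required on the weights.

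Next, I would simplify the kernel function $c_{A_i,q_i,\alpha_i}(t)$. With $q_{i+}=q_{i-}=q_i$, its definition collapses to
\[
c_{A_i,q_i,\alpha_i}(t)=\max\bigl\{\|A_i(t)\|^{-\alpha_i},\|A_i^{-1}(t)\|^{\alpha_i}\bigr\}\cdot |\det A_i^{-1}(t)|^{1/q_i}.
\]
Applying (\ref{DK1.1}) gives $\|A_i(t)\|^{-\alpha_i}\lesssim \|A_i^{-1}(t)\|^{\alpha_i}$, and (\ref{detA}) yields $|\det A_i^{-1}(t)|^{1/q_i}\leq \|A_i^{-1}(t)\|^{n/q_i}$; therefore
\[
c_{A_i,q_i,\alpha_i}(t)\lesssim \|A_i^{-1}(t)\|^{\alpha_i+n/q_i}.
\]
In parallel, using (\ref{DK1.1}) once more with $\sigma=(n+\gamma_i)(1/q_i+\lambda_i)$,
\[
\|A_i(t)\|^{(n+\gamma_i)(1/q_i+\lambda_i)}\lesssim \|A_i^{-1}(t)\|^{-(n+\gamma_i)(1/q_i+\lambda_i)}.
\]
Multiplying the two estimates, the exponent of $\|A_i^{-1}(t)\|$ becomes
\[
-(n+\gamma_i)\Bigl(\tfrac{1}{q_i}+\lambda_i\Bigr)+\alpha_i+\tfrac{n}{q_i}=\alpha_i-\tfrac{\gamma_i}{q_i}-\lambda_i(n+\gamma_i),
\]
which is precisely the exponent appearing in $\mathcal{C}_{11}$. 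It follows that $\mathcal{C}_{10}\lesssim \mathcal{C}_{11}<+\infty$, so hypothesis (\ref{DK2MVar}) of Theorem \ref{TheoremMorreyVar} is satisfied.

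Invoking Theorem \ref{TheoremMorreyVar} now gives
\[
\|H_{\Phi,\vec A}(\vec f)\|_{\dot B^{q,\lambda}_{\omega,v}}\lesssim \mathcal{C}_{10}\prod_{i=1}^m\|f_i\|_{\dot B^{q_i,\lambda_i}_{\omega_i,v_i}}\lesssim \mathcal{C}_{11}\prod_{i=1}^m\|f_i\|_{\dot B^{q_i,\lambda_i}_{\omega_i,v_i}},
\]
which is the desired bound. The only subtlety in the argument is the bookkeeping of the two-sided estimates in (\ref{DK1.1})--(\ref{detA}): one must verify that in the constant-exponent case the max/min choices built into $c_{A_i,q_i,\alpha_i}(t)$ can all be dominated by a single power of $\|A_i^{-1}(t)\|$, and that the factor $\|1\|_{L^{r_{1i}(t,\cdot)}}$ does not introduce a $t$-dependent blow-up; both are immediate once $q_i$ is constant. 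No further obstacle arises, so the corollary follows as a clean specialization of Theorem \ref{TheoremMorreyVar}.
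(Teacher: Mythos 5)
Your proposal is correct and follows essentially the same route as the paper: the paper's proof consists precisely of the one-line observation that $\left\| A_i(t) \right\|^{(n+\gamma_i)(\frac{1}{q_{i\infty}}+\lambda_i)} c_{A_i,q_i,\alpha_i}(t) \lesssim \left\| A_i^{-1}(t) \right\|^{\alpha_i - \frac{\gamma_i}{q_i} - \lambda_i(n+\gamma_i)}$ via (\ref{detA}) and (\ref{DK1.1}), so that $\mathcal C_{10}\lesssim\mathcal C_{11}$ and Theorem \ref{TheoremMorreyVar} applies. You have simply filled in the exponent bookkeeping (correctly) that the paper leaves implicit.
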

\begin{proof}
By (\ref{detA}) and (\ref{DK1.1}), it is clear to see that
\[
{{\left\| {A_i(t)} \right\|}^{(n + {\gamma _i})\left(\frac{1}{q_{i\infty}}+{\lambda _i}\right)}} c_{A_i,q_i,\alpha_i}(t)\lesssim {{\left\| {A_i^{ - 1}(t)} \right\|}^{\alpha_i-\frac{\gamma_i}{q_i} - \lambda_i(n + {\gamma _i})}}.
\]
Hence, by Theorem (\ref{TheoremMorreyVar}), the proof is finished.
\end{proof}
Moreover, we also obtain the above operator norm on the product of weighted $\lambda$-central Morrey spaces as follows.
\begin{theorem}\label{TheoremMorrey}
Let $\omega(x)= |x|^\gamma$ and  $\omega_i(x)=|x|^{\gamma_i}$ for $i= 1,...,m$.
Then, we have that $H_{\Phi ,\vec{A}}$ is bounded from ${\mathop B\limits^.}^{q_1,\lambda _1}(\omega_1)\times\cdots \times {\mathop B\limits^.}^{{q_m},{\lambda _m}}(\omega_m)$ to ${\mathop B\limits^.}^{q,\lambda}(\omega)$ if and only if 
\[
\mathcal C_{12}= \int\limits_{\mathbb R^n} {\frac{\Phi (t)}{{\left| t \right|}^n}\prod\limits_{i = 1}^m {{{\left\| {A_i^{ - 1}(t)} \right\|}^{ - (n + {\gamma_i}){\lambda _i}}}} }dt <  + \infty.
\]
Furthermore, we obtain
\[
{\big\|{H_{\Phi ,\vec{A} }} \big\|_{{\mathop B\limits^.}^{{q_1},{\lambda_1}}({{\omega_1}}) \times  \cdots \times {\mathop B\limits^.}^{{q_m},{\lambda _m}}({{\omega_m}}) \to {\mathop B\limits^.}^{q,\lambda } (_\omega)}}\simeq \mathcal C_{12}.
\]
\end{theorem}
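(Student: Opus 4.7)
The strategy is to obtain $\|H_{\Phi,\vec A}\|_{\prod {\mathop B\limits^.}^{q_i,\lambda_i}(\omega_i)\to {\mathop B\limits^.}^{q,\lambda}(\omega)} \simeq \mathcal{C}_{12}$ by establishing the upper bound as a specialisation of Corollary \ref{CoroMorreyVar} and the lower bound by testing against explicit power-type functions. Both implications of the ``if and only if'' and the norm equivalence then fall out of a single pair of estimates.

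For the upper bound, I would invoke Corollary \ref{CoroMorreyVar} in the constant-exponent setting with the weight choices $v_i(x)=|x|^{\gamma_i/q_i}$ (so $\alpha_i=\gamma_i/q_i$) and $v(x)=|x|^{\gamma/q}$. The identification ${\mathop B\limits^.}_{\omega_i,\omega_i^{1/q_i}}^{q_i,\lambda_i}={\mathop B\limits^.}^{q_i,\lambda_i}(\omega_i)$ from (\ref{rel-func}) places us exactly in the framework of the corollary, and the hypothesis (\ref{DKVarLeb}) is automatic because $1/r_{1i}\equiv 0$ and $\|1\|_{L^\infty}=1$. The exponent of $\|A_i^{-1}(t)\|$ appearing in $\mathcal{C}_{11}$ collapses to $-\lambda_i(n+\gamma_i)$ since $\alpha_i-\gamma_i/q_i=0$, and hence $\mathcal{C}_{11}=\mathcal{C}_{12}$, yielding $\|H_{\Phi,\vec A}\|\lesssim \mathcal{C}_{12}$.

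For the reverse inequality, I plan to test against
\[
f_i(x)=|x|^{\lambda_i(n+\gamma_i)},\qquad i=1,\ldots,m.
\]
A direct polar-coordinate computation gives $\|f_i\chi_{B(0,R)}\|_{L^{q_i}(\omega_i)}\simeq R^{\lambda_i(n+\gamma_i)+(n+\gamma_i)/q_i}$ and $\omega_i(B(0,R))\simeq R^{n+\gamma_i}$, so the $R$-dependence cancels in the central Morrey norm and $\|f_i\|_{{\mathop B\limits^.}^{q_i,\lambda_i}(\omega_i)}$ is a finite positive constant; convergence of the local integral rests on $\lambda_i q_i(n+\gamma_i)+n+\gamma_i>0$, i.e.\ $\lambda_i>-1/q_i$, which is part of the standing assumption inherited from the preamble preceding Theorem~\ref{TheoremMorreyVar}. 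Substituting these test functions into (\ref{mulHausdorff}) and applying (\ref{DK1.2}) with $\sigma_i=\lambda_i(n+\gamma_i)$ yields the pointwise bound
\[
H_{\Phi,\vec A}(\vec f)(x)\gtrsim \Big(\int_{\mathbb R^n}\frac{\Phi(t)}{|t|^n}\prod_{i=1}^m\|A_i^{-1}(t)\|^{-\lambda_i(n+\gamma_i)}dt\Big)\,|x|^{\sum_i\lambda_i(n+\gamma_i)} = \mathcal{C}_{12}\,|x|^{\lambda(n+\gamma)},
\]
where the compatibility relation $\sum_i\tfrac{n+\gamma_i}{n+\gamma}\lambda_i=\lambda$ reassembles the exponent. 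The same polar-coordinate calculation shows $\||\cdot|^{\lambda(n+\gamma)}\|_{{\mathop B\limits^.}^{q,\lambda}(\omega)}$ is a finite positive constant, so dividing by $\prod_i\|f_i\|_{{\mathop B\limits^.}^{q_i,\lambda_i}(\omega_i)}\simeq 1$ produces $\|H_{\Phi,\vec A}\|\gtrsim\mathcal{C}_{12}$, and in particular forces $\mathcal{C}_{12}<\infty$ whenever $H_{\Phi,\vec A}$ is bounded.

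The only mildly delicate step is keeping track of the local integrability of the test functions near the origin, which reduces to $\lambda_i>-1/q_i$ for every $i$ and to the aggregate bound $\lambda>-1/q$ for the target function $|x|^{\lambda(n+\gamma)}$. The latter follows automatically from the preamble identities $\sum 1/q_i=1/q$ and $\sum\gamma_i/q_i=\gamma/q$ via $\sum\tfrac{n+\gamma_i}{(n+\gamma)q_i}=1/q$, so this is not a genuine obstacle. Combining the two matching bounds closes the proof and, as a by-product, gives the equivalence $\|H_{\Phi,\vec A}\|\simeq\mathcal{C}_{12}$ claimed in the statement.
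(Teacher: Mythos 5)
Your proposal is correct and follows essentially the same route as the paper: the upper bound is obtained by specialising Corollary \ref{CoroMorreyVar} with $\alpha_i=\gamma_i/q_i$ (so that $\mathcal C_{11}=\mathcal C_{12}$) together with the identification ${\mathop B\limits^.}_{\omega_i,\omega_i^{1/q_i}}^{q_i,\lambda_i}={\mathop B\limits^.}^{q_i,\lambda_i}(\omega_i)$, and the lower bound uses exactly the paper's test functions $f_i(x)=|x|^{(n+\gamma_i)\lambda_i}$ together with the pointwise estimate (\ref{DK1.2}). Your added remarks on the integrability thresholds $\lambda_i>-1/q_i$ and the aggregate bound $\lambda>-1/q$ are consistent with the standing assumptions in the preamble and do not change the argument.
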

\begin{proof}
We first note that the sufficient condition of the theorem is derived from Corollary  \ref{CoroMorreyVar}. In more details, by letting $\alpha_i=\frac{\gamma_i}{q_i}$ for $i=1,...,m$, we have $\mathcal C_{11}=\mathcal C_{12}<\infty$. Hence, by Corollary \ref{CoroMorreyVar}, we find
\[
{\left\| {{H_{\Phi ,\vec{A} }}(\vec{f} )} \right\|_{{\mathop B\limits^.}_{\omega,{\omega}^{1/q}}^{q,\lambda }}}\lesssim \mathcal C_{12}.\prod\limits_{i = 1}^m {{{\left\| {{f_i}} \right\|}_{{\mathop B\limits^.}_{{\omega_i,{\omega_i}^{1/q_i}}}^{{q_i},{\lambda_i}}}}}.
\]
From this, by ${\mathop B\limits^.}_{\omega,{\omega}^{1/q}}^{q,\lambda }={\mathop B\limits^.}^{q,\lambda}(\omega)$ and ${\mathop B\limits^.}_{\omega_i,{\omega_i}^{1/q_i}}^{q_i,\lambda_i }={\mathop B\limits^.}^{q_i,\lambda_i}(\omega_i)$ for $i=1,...,m$, the proof of sufficient condition of this theorem is ended.

To give the proof for the necessary condition, let us now choose 
\[
f_i(x)=\left|x\right|^{(n+\gamma_i)\lambda_i}.
\] 
Then, it is not hard to show that
\[
{\left\| {{f_i}} \right\|_{{\mathop B\limits^.}^{{q_i},{\lambda _i}}({{\omega_i}})}} = {\left( {\frac{{n + {\gamma _i}}}{{\left| {{S_{n - 1}}} \right|}}} \right)^{{\lambda _i}}}\frac{1}{{{{\left( {1 + {q_i}{\lambda _i}} \right)}^{\frac{1}{{q_i}}}}}}.
\]
Thus, we have
\begin{equation}\label{Choosefi}
\prod\limits_{i = 1}^m {{{\big\| {{f_i}} \big\|}_{{\mathop B\limits^.}^{{q_i},{\lambda _i}}({\omega_i})}}}  \lesssim {\left( {\frac{{\gamma + n}}{{\left| {{S_{n - 1}}} \right|}}} \right)^{\lambda }}{(1 + \lambda q)^{\frac{-1}{q}}}.
\end{equation}
By choosing $f_i$'s, we also have
\begin{eqnarray}
  &&{\left\| {{H_{\Phi ,\vec{A} }}\left( {\vec{f} } \right)} \right\|_{{\mathop B\limits^.}^{q,\lambda }(\omega)}}
  \nonumber
  \\
   &&= \mathop {\sup }\limits_{R > 0} {\Big( {\frac{1}{{\omega{{(B(0,R))}^{1 + q\lambda }}}}\int\limits_{B(0,R)} {{{\Big| {\int\limits_{\mathbb R^n} {\frac{{\Phi (t)}}{{{{\left| t \right|}^n}}}} \prod\limits_{i = 1}^m {{{\left| {{A_i}(t)x} \right|}^{(n + {\gamma _i}){\lambda _i}}}} dt} \Big|}^q}\omega(x)dx} }\Big)^{\frac{1}{q}}}.
\nonumber
\end{eqnarray}
By (\ref{DK1.2}), we get ${\left| {{A_i}(t)x} \right|^{(n + {\gamma _i}){\lambda _i}}} \gtrsim {\left\| {A_i^{ - 1}(t)} \right\|^{ - (n + {\gamma_i}){\lambda _i}}}.{\left| x \right|^{(n + {\gamma_i}){\lambda _i}}}.$ Therefore, we imply that
\begin{eqnarray}
{\left\| {{H_{\Phi ,\vec{A} }}\big( {\vec{f} } \big)} \right\|_{{\mathop B\limits^.}^{q,\lambda }(\omega)}}&\gtrsim & \Big( {\int\limits_{{\mathbb R^n}} {\frac{{\Phi (t)}}{{{{\left| t \right|}^n}}}} {\prod\limits_{i=1}^{m}{\left\| {A_i^{ - 1}(t)} \right\|}^{ - (n + {\gamma_i}){\lambda _i}}}dt} \Big)\times\nonumber
\\
&&\,\,\,\times\,\mathop {\sup }\limits_{R > 0} {\Big( {\frac{1}{{\omega{{(B(0,R))}^{1 + q\lambda }}}}\int\limits_{B(0,R)} \Big({\prod\limits_{i = 1}^m {{{\left| x \right|}^{(n + {\gamma_i}){\lambda _i}q}\Big)|x|^{\gamma}}} dx} } \Big)^{\frac{1}{q}}}\nonumber
  \\
&=&\Big( {\int\limits_{{\mathbb R^n}} {\frac{{\Phi (t)}}{{{{\left| t \right|}^n}}}} {\prod\limits_{i=1}^{m}{\left\| {A_i^{- 1}(t)} \right\|}^{ - (n + {\gamma_i})\lambda_i}}dt} \Big){\Big( {\frac{{\gamma  + n}}{{\left| {{S_{n - 1}}} \right|}}} \Big)^{\lambda }}{(1 + \lambda q)^{\frac{-1}{q}}}. \nonumber
\end{eqnarray}
Hence, it follows from (\ref{Choosefi}) that
\[
{\left\| {{H_{\Phi ,\vec{A} }}\big( {\vec{f} } \big)} \right\|_{{\mathop B\limits^.}^{q,\lambda }(\omega)}} \gtrsim\Big( {\int\limits_{{\mathbb R^n}} {\frac{{\Phi (t)}}{{{{\left| t \right|}^n}}}} {\prod\limits_{i=1}^{m}{\left\| {A_i^{ - 1}(t)} \right\|}^{ - (n + {\gamma_i}){\lambda _i}}}dt} \Big).\prod\limits_{i = 1}^m {{{\left\| {{f_i}} \right\|}_{{\mathop B\limits^.}^{{q_i},{\lambda _i}}({{\omega_i}})}}}.
\]
Because of assuming that $H_{\Phi ,\vec{A}}$ is bounded from ${\mathop B\limits^.}^{q_1,\lambda _1}({\omega_1})\times \cdots \times {\mathop B\limits^.}^{{q_m},{\lambda _m}}({\omega_m})$ to ${\mathop B\limits^.}^{q,\lambda}(\omega)$, it immediately deduces that $\mathcal C_{12}<\infty$, and hence, the proof of the theorem is completed.
\end{proof}
{\textbf{Acknowledgments}}.  This paper is supported by the Vietnam National Foundation for Science and Technology Development (NAFOSTED) under grant number 101.02-2014.51.

\bibliographystyle{amsplain}

\end{document}